\setlist[enumerate]{leftmargin=15mm,nosep}
\definecolor{labelkey}{rgb}{0,0.08,0.45}
\definecolor{refkey}{rgb}{0,0.6,0.0}
\definecolor{Brown}{rgb}{0.45,0.0,0.05}
\definecolor{lime}{rgb}{0.00,0.8,0.0}
\definecolor{lblue}{rgb}{0.5,0.5,0.99}
\definecolor{OliveGreen}{rgb}{0,0.6,0}
\colorlet{hlcyan}{cyan!30}
\def\namedlabel#1#2{\begingroup
	\def\@currentlabel{#2}%
	\label{#1}\endgroup
}
\newcommand{\seppthree}{\setlength{\itemsep}{-3pt}}
\newcommand{\ssnonex}{super strongly nonexpansive}
\providecommand{\siff}{\Leftrightarrow}
\newcommand{\nnn}{\ensuremath{{n\in{\mathbb N}}}}
\newcommand{\menge}[2]{\big\{{#1}~\big |~{#2}\big\}}
\newcommand{\fenv}[1]%
{\ensuremath{\,\overrightarrow{\operatorname{env}}_{#1}}}
\newcommand{\benv}[1]%
{\ensuremath{\,\overleftarrow{\operatorname{env}}_{#1}}}
\newcommand{\scal}[2]{\left\langle{#1},{#2}  \right\rangle}
\newcommand{\RR}{\ensuremath{\mathbb R}}
\newcommand{\NN}{\ensuremath{\mathbb N}}
\newcommand{\dom}{\ensuremath{\operatorname{dom}}}
\newcommand{\prox}{\ensuremath{\operatorname{Prox}}}
\newcommand{\ran}{\ensuremath{{\operatorname{ran}}\,}}
\newcommand{\zer}{\ensuremath{\operatorname{zer}}}
\newcommand{\Id}{\ensuremath{\operatorname{Id}}}
\newcommand{\bDelta}{{\begin{proof} \Delta}}
	\newcommand{\bv}{{\begin{proof} v}}
		\newcommand{\bg}{{\begin{proof} g}}
			\crefname{equation}{}{equations}
			\crefname{chapter}{Appendix}{chapters}
			\crefname{item}{}{items}
			\crefname{enumi}{}{}
			\crefname{appsec}{Appendix}{Appendices}
			\newtheorem{theorem}{Theorem}[section]
			\newtheorem{lemma}[theorem]{Lemma}
			\newtheorem{corollary}[theorem]{Corollary}
			\newtheorem{proposition}[theorem]{Proposition}
			\newtheorem{definition}[theorem]{Definition}
			\newtheorem{example}[theorem]{Example}
			\newtheorem{fact}[theorem]{Fact}
			\newtheorem{remark}[theorem]{Remark}
			\def\endproof{\ensuremath{\hfill \quad \blacksquare}}
			\providecommand{\abs}[1]{\lvert#1\rvert}
			\providecommand{\norm}[1]{\lVert#1\rVert}
			\providecommand{\normsq}[1]{\lVert#1\rVert^2}
			\providecommand{\LA}{\Leftarrow}
			\providecommand{\RA}{\Rightarrow}
			\providecommand{\grad}{\nabla}
			\providecommand{\RR}{\mathbb{R}}
			\providecommand{\opint}[1]{\left]#1\right[}
			\providecommand{\ocint}[1]{\left]#1\right]}
			\providecommand{\ran}{\operatorname{ran}}
			\providecommand{\dom}{\operatorname{dom}}
			\newcommand{\fix}{\ensuremath{\operatorname{Fix}}}
			\providecommand{\gra}{\operatorname{gra}}
			\providecommand{\Id}{\operatorname{{ Id}}}
			\providecommand{\kk}{{\begin{proof} K}}
				\providecommand{\fady}{\varnothing}
				\providecommand{\rras}{\rightrightarrows}
				\providecommand{\NN}{\mathbb{N}}
				\providecommand{\fix}{\operatorname{Fix}}
				\providecommand{\ran}{\operatorname{ran}}
				\providecommand{\Id}{\operatorname{Id}}
				\providecommand{\zer}{\operatorname{zer}}
				\providecommand{\fady}{\varnothing}
				\providecommand{\RR}{\mathbb{R}}
				\providecommand{\NN}{\mathbb{N}}
				\newcommand{\vsne}{super strongly nonexpansive}
				\newenvironment{myproof}[1][\proofname]{
					{\emph {#1} }%
				}{\endproof}
				\newcommand{\ds}{\displaystyle}
				\DeclareMathOperator{\domai}{dom} 
				\DeclareMathOperator{\graph}{gr}
				\definecolor{myblue}{rgb}{0.9,0.9,0.98}
				\newcommand*\mybluebox[1]{%
					\colorbox{myblue}{\hspace{1em}#1\hspace{1em}}}
				\newtcbox{\mymath}[1][]{%
					nobeforeafter, math upper, tcbox raise base,
					enhanced, colframe=blue!20!black,
					colback=brown!10, boxrule=0.7pt,
					#1}
\begin{document}

					%

					\author{
						Leon Liu\thanks{Department of Combinatorics and Optimization, 
							University of Waterloo,
							Waterloo, Ontario N2L~3G1, Canada.
							. E-mail:
							\texttt{l352liu@uwaterloo.ca}.}, ~
						Walaa M. Moursi\thanks{Department of Combinatorics and Optimization, 
							University of Waterloo,
							Waterloo, Ontario N2L~3G1, Canada.
							E-mail: \texttt{walaa.moursi@uwaterloo.ca}.} ~and~
						Jon Vanderwerff\thanks{
							Department of Mathematics, La Sierra University, Riverside, CA 92515, USA.
							E-mail: \texttt{jvanderw@lasierra.edu}.}
					}

					\title{\textsf{
							Strongly nonexpansive mappings revisited:
							\\
							uniform monotonicity and operator splitting
						}
					}

					\date{May 18, 2022}
					
					\maketitle	
					\begin{abstract}
						The correspondence between the class of nonexpansive mappings 
						and the class of maximally monotone operators 
						via the reflected resolvents of 
						the latter has played an instrumental role in the convergence analysis of
						the splitting methods. Indeed, the performance of 
						some of these methods, e.g., 
						Douglas--Rachford and Peaceman--Rachford methods 
						hinges on iterating the so-called splitting operator associated with the 
						individual operators.
						This splitting operator is a function of the composition of the reflected resolvents
						of the underlying operators.
						In this paper, we provide a comprehensive study of the class of uniformly 
						monotone operators and their corresponding reflected resolvents.
						We show that the latter is closely related to the class of the 
						strongly nonexpansive operators introduced by Bruck and Reich.
						Connections to duality via   inverse operators are systematically studied.
						We provide applications to Douglas--Rachford 
						and Peaceman--Rachford methods. 
						Examples that illustrate and tighten our results are presented.
					\end{abstract}
					{ 
						\noindent
						{\bfseries 2010 Mathematics Subject Classification:}
						{49M27, 
							65K10, 
							90C25; 
							Secondary 
							47H14, 
							49M29. 
						}

						\noindent {\bfseries Keywords:}
						contraction mappings, 
						Douglas--Rachford splitting,
						Peaceman--Rachford splitting,
						resolvent,
						reflected resolvent,
						strongly nonexpansive mapping,
						uniformly convex function,
						uniformly monotone operator.
						
						\section{Introduction}
						
						Throughout, we assume that 
						\begin{empheq}[box=\mybluebox]{equation}
							\text{$X$ is
								a  real Hilbert space, 
							}
						\end{empheq}
						with inner product 
						$\scal{\cdot}{\cdot}\colon X\times X\to\RR$ 
						and induced norm $\|\cdot\|$. 
						Let $A\colon X\rras X$ be a  set-valued operator.	The \emph{graph} of $A$
						is $\gra A=\menge{(x,x^*)\in X\times X}{x^*\in Ax}$.
						Recall that 
						$A$ is \emph{monotone} if $\{(x,x^*),(y,y^*)\}\subseteq \gra A$
						implies that $\scal{x-y}{x^*-y^*}\ge 0$.
						A monotone operator  $A$ is \emph{maximally monotone} 
						if $\gra A$ does not admit a proper extension (in terms of set inclusion) 
						to a graph of  
						a monotone operator.
						The
						\emph{resolvent} of $A$
						is
						$J_A=(\Id+A)^{-1} $ and the
						\emph{reflected resolvent} of $A$
						is
						$R_A=2J_A-\Id $,
						where $\Id\colon X\to X\colon x\mapsto  x$.
						
						The theory of monotone operators has been of significant interest 
						in optimization:
						indeed a typical  problem in convex optimization seeks finding 
						a minimizer of  
						the sum $f+g$, where both $f$ and $g$ are proper lower semicontinuous 
						convex functions on $X$. 
						Thanks to Rockafellar's fundamental work  (see \cite[Theorem~A]{Rock1970}) 
						the (possibly set-valued)
						\emph{subdifferential} operators $\partial f$
						and $\partial g$  of $f$ and $g$ respectively 
						are maximally monotone.
						Assuming appropriate constraint qualifications
						the problem of minimizing $f+g$ amounts to solving the 
						monotone inclusion problem: 
						\begin{equation}
							\tag{P}
							\label{P}
							\text{Find $x\in X$
								such that $x\in \zer(A+B) = \menge{x\in X}{0\in Ax+Bx}$.}
						\end{equation}	
						For a comprehensive discussion on \cref{P}
						and its connection to optimization problems 
						we refer the reader to 
						\cite{BC2017}, 
						\cite{Borwein50}, 
						\cite{Brezis}, 	
						\cite{BurIus},
						\cite{Rock98},
						\cite{Simons1},
						\cite{Simons2},
						\cite{Zeidler2a},
						\cite{Zeidler2b} and the references therein.
						Splitting algorithms are potential candidates  to solve \cref{P}.
						Many of these algorithms employ the resolvent and/or the reflected  
						resolvent of the underlaying operators $A$ and $B$.
						The monotonicity of an operator $A$ 
						is reflected in the firm nonexpansiveness 
						of its resolvents or, equivalently; the nonexpansiveness 
						of its reflected resolvent. 
						When a monotone operator $A$ posses supplementary
						properties, e.g., strong monotonicity, Lipschitz  continuity
						or cocoercivity its reflected resolvent enjoys refined notions 
						of nonexpansiveness, see, e.g., 
						\cite{BMW2021},
						\cite{BMW12},
						\cite{Gis2017},
						and 
						\cite{MVan2019}.
						However, none of these works studies the notion of  
						\emph{uniform monotonicity} and what the corresponding
						property in the reflected resolvent (if any) could be.
						
						\emph{The goal of this paper is to provide a systematic
							study of the class of uniformly 
							monotone operators and their corresponding reflected resolvents.
							We show that the latter is closely related to the class of the 
							strongly nonexpansive operators introduced by Bruck and Reich
							\cite{BR77}.
							Connection to duality via the inverse operators is systematically studied.
							When the underlying operators are subdifferentials of proper lower 
							semicontinuous convex functions better duality results hold.
							We provide applications to Douglas--Rachford, 
							Peaceman--Rachford and forward-backward algorithms                   . 
							Examples that illustrate and tighten our results are presented.
						}

						\subsection*{Organization and notation}
						The organization of this paper is as follows: 
						\cref{sec:2} contains a collection of auxiliary results and facts.
						Our main results appear in \cref{sec:3}--\cref{sec:7}. 
						In \cref{sec:3} we provide key results concerning 
						the correspondence between the class of uniformly
						monotone operators and
						the new class of super strongly nonexpansive mappings.
						In \cref{sec:4} 
						we prove the surjectivity of uniformly monotone operators.
						In \cref{sec:5} and  \cref{sec:6}  
						we demonstrate  the power of self-dual properties 
						and the connection to the class of contractions for large distances. 
						\cref{sec:7} is dedicated to the study of the 
						compositions of the classes of nonexpansive mapping studied
						in the paper.
						Finally, \cref{sec:8} presents applications 
						of our results  to refine and strengthen
						known results in operator splitting methods.

						The notation we adopt is standard and follows largely, e.g., 
						\cite{BC2017} and \cite{Rock1970}.

						\section{Facts and auxiliary results}
						\label{sec:2}
						
						We start by recalling 
						the following instrumental fact by Minty.
						
						\begin{fact}[{\bf Minty's Theorem}]{\rm\cite{Minty}
								(see also \cite[Theorem~21.1]{BC2017})}
							\label{thm:minty}
							Let $A\colon X\rras X$ be monotone. Then
							\begin{equation}
								\label{eq:Minty}
								\gra A=\menge{(J_A x, (\Id-J_A)x)}{x\in \ran (\Id+A)}.
							\end{equation}
							Moreover,
							\begin{equation}
								\label{eq:Minty:2}
								\text{$A$ is maximally monotone $\siff$ $\ran (\Id+A)=X$.}
							\end{equation}
						\end{fact}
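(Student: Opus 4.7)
My plan is to first establish that monotonicity of $A$ forces $J_A$ to be at most single-valued, which is the backbone of the statement. Indeed, if $u$ and $v$ both belong to $J_A x$, then $x-u\in Au$ and $x-v\in Av$, and monotonicity applied to the pairs $(u,x-u),(v,x-v)$ yields $\scal{u-v}{(x-u)-(x-v)}=-\|u-v\|^2\ge 0$, so $u=v$. Thus $J_A$ is a genuine function on $\ran(\Id+A)$.

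Next I would prove \cref{eq:Minty} by two inclusions. For $(\supseteq)$: if $x\in\ran(\Id+A)$ and $y=J_A x$, then by definition $x-y\in Ay$, hence $(J_A x,(\Id-J_A)x)=(y,x-y)\in\gra A$. For $(\subseteq)$: given $(y,y^*)\in\gra A$, set $x\coloneqq y+y^*$, so $x\in y+Ay\subseteq\ran(\Id+A)$ and $y\in J_A x$; by single-valuedness, $y=J_A x$, whence $y^*=x-y=(\Id-J_A)x$. This gives the desired identity.

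For \cref{eq:Minty:2}, the direction $(\Leftarrow)$ proceeds by a classical short argument. Suppose $\ran(\Id+A)=X$ and $(y,y^*)\in X\times X$ is monotonically related to every element of $\gra A$. Since $y+y^*\in X=\ran(\Id+A)$, there is $(x,x^*)\in\gra A$ with $x+x^*=y+y^*$, i.e.\ $y-x=x^*-y^*$. Monotonicity applied to the pairs $(x,x^*),(y,y^*)$ then gives $0\le\scal{y-x}{y^*-x^*}=-\|y-x\|^2$, forcing $y=x$ and $y^*=x^*$, so $(y,y^*)\in\gra A$, proving maximality.

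The genuinely hard part is $(\Rightarrow)$: showing that maximal monotonicity implies $\ran(\Id+A)=X$, i.e., that for every $z\in X$ the inclusion $z\in x+Ax$ admits a solution. I would prove this by the standard route: fix $z\in X$ and consider the operator $B\coloneqq A-z$, which is maximally monotone; the task reduces to finding a zero of $\Id+B$. One then invokes a Banach fixed-point / convex-analytic argument, for instance applying the Banach contraction principle to the Yosida regularization $B_\lambda$ (whose resolvents are firmly nonexpansive) and passing to the limit, or equivalently using the Fitzpatrick function of $B$ together with the fact that $B+\Id$ is strongly monotone on its domain to extract a fixed point of the associated firm nonexpansion; a maximality argument then rules out any obstruction to $z\in\ran(\Id+A)$. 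This surjectivity step is the true obstacle; the remainder of the theorem is essentially bookkeeping once single-valuedness of $J_A$ is secured.
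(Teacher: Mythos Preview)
The paper does not prove this statement at all: it is recorded as a \emph{Fact} with citations to Minty's original paper and to \cite[Theorem~21.1]{BC2017}, so there is no ``paper's own proof'' to compare against. Your argument for the parametrization \cref{eq:Minty} and for the implication $(\Leftarrow)$ in \cref{eq:Minty:2} is correct and entirely standard.

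Your sketch of the hard direction $(\Rightarrow)$, however, contains a circularity you should be aware of. You propose to use the Yosida approximation $B_\lambda$ of $B=A-z$, but $B_\lambda=\lambda^{-1}(\Id-J_{\lambda B})$ is only defined once $J_{\lambda B}$ is defined on all of $X$, i.e., once $\ran(\Id+\lambda B)=X$ --- which is precisely (a rescaled version of) the surjectivity you are trying to establish. The standard non-circular proofs in Hilbert space proceed instead via a Galerkin/finite-dimensional reduction combined with Brouwer's fixed-point theorem, or via the Debrunner--Flor lemma, or via Fitzpatrick functions and Fenchel duality; any of these would close the gap. Since the paper simply cites the result, your level of detail for the easy parts already exceeds what the paper provides, but if you want a self-contained argument you must replace the Yosida step by one of the genuine existence arguments above.
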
	
						
						Let $A\colon X\rras X$ be monotone
						and let $(x,u)\in X\times X$. In view of \cref{thm:minty},
						it is easy to check that
						\begin{equation}
							\label{lem:gr:RA:i}
							(x,u)\in \gra J_A\siff (u, x-u) \in \gra A,
						\end{equation}
						and that
						\begin{equation}
							\label{eq:gr:A:RA}
							(x,u)\in \gra R_A\siff
							\bigl(\tfrac{1}{2}(x+u),
							\tfrac{1}{2}(x-u)
							\bigr) \in \gra A.
						\end{equation}
						
						It is straightforward to verify that 
						(see, e.g., \cite[Proposition~23.38]{BC2017})
						\begin{equation}
							\label{eq:fix:JA:RA}
							\zer A=\fix J_A=\fix R_A,
						\end{equation}	
						and that 
						\begin{equation}
							\label{eq:RA:-RA}
							J_{A^{-1}}=\Id -J_A \text{ and consequently } R_{A^{-1}}=-R_A.
						\end{equation}	
						\begin{example}
							Suppose that $f\colon X\to \left]-\infty,+\infty\right]$ 
							is convex lower semicontinuous and proper.
							Then $\partial f$ is maximally monotone.
							The resolvent $J_A=J_{\partial f}=\prox f$
							and the reflected resolvent is $R_f= R_A=2\prox f-\Id$ and hence,
							by \cref{eq:RA:-RA},  
							$R_{f^*}=-R_A=\Id-2\prox_f$.
						\end{example}		
						\begin{proof}
							See \cite{Rock1970} and  \cite[Example~23.3]{BC2017}. 	
						\end{proof}

						Let $\phi\colon \RR_{+}\to\left[0,+\infty\right]$ 
						be an increasing function that vanishes only at $0$
						(such a function is called a modulus).
						Recall that  $f\colon X\to\left]-\infty,+\infty\right] $ is \emph{uniformly convex} with modulus $\phi$
						if $(\forall x\in \dom f)$ $(\forall y\in \dom f)$
						$(\forall \alpha\in \left]0,1\right[)$
						\begin{equation}
							f(\alpha x+(1-\alpha )y)+\alpha(1-\alpha)\phi(\norm{x-y})\le \alpha f(x)+(1-\alpha)f(y).
							\end{equation}
						Recall also that
						$A\colon X\rras X$ is \emph{uniformly monotone} with modulus 
						$\phi$ if 
						$\{(x,x^*),(y,y^*)\}\subseteq  \gra A$ implies that 
						\begin{equation}
							\label{eq:def:um}
							\scal{x-y}{x^*-y^*}\ge \phi(\norm{x-y}).
						\end{equation}		
					
						The notion of uniform monotonicity  is naturally motivated 
						by properties of subdifferentials of uniformly convex functions. 
						A comprehensive overview of 
						uniformly convex functions is found in \cite{Za02}. 
						Some other results regarding uniformly 
						convex functions can be found in \cite{BV10,BV12,Za83}.
						\begin{fact}
							Suppose that $f\colon X\to \left]-\infty,+\infty\right]$ 
							is uniformly convex with a modulus 
							$\phi$.
							Then $\partial f$ is uniformly monotone with a modulus $2\phi$. 
						\end{fact}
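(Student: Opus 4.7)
The plan is to first prove a sharpened subgradient inequality that builds the modulus $\phi$ into each one-sided estimate, and then symmetrize.

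\textbf{Step 1: A strengthened subgradient inequality.} Fix $(x,x^*)\in \gra \partial f$ and any $z\in \dom f$. For $\alpha\in \left]0,1\right[$, combine two ingredients:
\begin{itemize}
\item The usual subgradient inequality applied at the convex combination $\alpha z+(1-\alpha)x$ yields
$$f(\alpha z+(1-\alpha)x)\;\ge\; f(x)+\alpha\scal{z-x}{x^*}.$$
\item The uniform convexity inequality applied to $z$ and $x$ with parameter $\alpha$ yields
$$f(\alpha z+(1-\alpha)x)\;\le\; \alpha f(z)+(1-\alpha)f(x)-\alpha(1-\alpha)\phi(\norm{z-x}).$$
\end{itemize}
Chain these two, subtract $f(x)$, divide by $\alpha$, and let $\alpha\downarrow 0^+$. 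The $\alpha$ cancels cleanly on the left, and $(1-\alpha)\phi(\norm{z-x})\to \phi(\norm{z-x})$, producing
$$f(z)\;\ge\; f(x)+\scal{z-x}{x^*}+\phi(\norm{z-x}).$$

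\textbf{Step 2: Symmetrization.} Now take $\{(x,x^*),(y,y^*)\}\subseteq \gra \partial f$. Applying the strengthened inequality of Step~1 twice (once with $z=y$ at the point $(x,x^*)$, once with $z=x$ at the point $(y,y^*)$) gives
\begin{align*}
f(y)&\ge f(x)+\scal{y-x}{x^*}+\phi(\norm{y-x}),\\
f(x)&\ge f(y)+\scal{x-y}{y^*}+\phi(\norm{x-y}).
\end{align*}
Adding the two inequalities, the $f(x),f(y)$ terms cancel, and after rearranging one obtains
$$\scal{x-y}{x^*-y^*}\;\ge\; 2\phi(\norm{x-y}),$$
which is precisely uniform monotonicity of $\partial f$ with modulus $2\phi$ as defined in \cref{eq:def:um}.

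\textbf{Anticipated obstacle.} There is no serious technical obstacle; the only point requiring a little care is the limit $\alpha\downarrow 0^+$ in Step~1. Since the inequality obtained before passing to the limit already has the form $f(z)-f(x)\ge \scal{z-x}{x^*}+(1-\alpha)\phi(\norm{z-x})$ and $\phi(\norm{z-x})\in [0,+\infty]$ is independent of $\alpha$, the passage is immediate and requires no compactness or continuity hypothesis beyond what is encoded in the definitions. The factor of $2$ in the modulus is the payoff for using the strengthened one-sided inequality on both sides rather than combining the two raw subgradient inequalities at a single point.
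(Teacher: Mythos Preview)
Your proof is correct and is essentially the standard argument found in the references the paper cites; in the paper itself this statement is recorded as a Fact with proof deferred to \cite[Theorem~3.5.10]{Za02} and \cite[Example~22.4(iii)]{BC2017}, so there is no in-paper argument to compare against beyond noting that your two-step route (strengthened subgradient inequality, then symmetrize) is exactly what those sources do.
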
	
						\begin{proof}
							See \cite[Theorem~3.5.10]{Za02} and also \cite[Example~22.4(iii)]{BC2017}.
						\end{proof}

						We now turn to definitions of certain classes of  mappings
						related to the notions of nonexpansiveness and Lipschitz continuity.	
						\begin{definition}
							\label{d:buttout}
							Let
							$T\colon X\to X$, let $(x,y)\in X\times X$
							and let $\alpha \in\left]0,1\right[$.
							\begin{enumerate}
								\item
								\label{d:nonex}
								$T$ is \emph{nonexpansive}
								if 
								$\norm{Tx-Ty}\le \norm{x-y}$.
								\item
								\label{d:snonex}
								$T $ is \emph{strongly nonexpansive}
								if $T$ is nonexpansive and we have the implication
								\begin{align}
									\label{eq:def:sn}
									\left.\begin{array}{r@{\mskip\thickmuskip}l}
										(x_n-y_n)_\nnn \text{ is bounded} \\
										\norm{x_n-y_n}-\norm{Tx_n-Ty_n}\to 0
									\end{array} \right\}
									\quad \implies \quad
									(x_n-y_n)-(Tx_n-Ty_n)\to 0.
								\end{align}

								\item
								\label{d:av}
								$T $ is \emph{$\alpha$-averaged}
								if $\alpha\in\left[0,1\right[$ and there exists a nonexpansive
								operator $N\colon X\to X$
								such that $T=(1-\alpha)\Id+\alpha N$;
								equivalently, 
								we have (see \cite[Proposition~4.35]{BC2017})
								\begin{equation}
									(1-\alpha)\norm{(\Id-T)x-(\Id-T)y}^2
									\le \alpha(\norm{x-y}^2-\norm{Tx-Ty}^2).
								\end{equation}
								\item
								\label{d:fne}
								$T $ is \emph{firmly nonexpansive}
								if $T$ is $\tfrac{1}{2}$-averaged;
								equivalently,
								\begin{equation}
									\normsq{Tx-Ty}+\normsq{(\Id-T)x-(\Id-T)y}\le \normsq{x-y}.
								\end{equation}	
								\item
								$T$ is {\em Lipschitz for large distances} (see \cite[Proposition~1.11]{BL2000}) if for each $\epsilon > 0$, 
								there exists $K_{\epsilon} > 0$ so that
								$\|Tx - Ty\| \le K_\epsilon \|x - y\|$ whenever 
								$\|x - y\| \ge \epsilon$. 
							\end{enumerate}
						\end{definition}
						
						The following  well-known fact 
						summarizes the correspondences between the class of maximally monotone
						operators and the classes of firmly nonexpansive and  nonexpansive
						mappings.
						
						\begin{fact}
							\label{fact:corres}
							Let  $T\colon X\to X$,
							and
							set $A=T^{-1}-\Id$. Then $T=J_A$
							 and $2T-\Id=R_A$.
							Moreover,
							\begin{equation}
								\label{eq:corresp}
								\text{$A$ is maximally monotone $\siff$ $T$ is firmly nonexpansive  
									$\siff$ $2T-\Id$ is nonexpansive.}
							\end{equation}	
						\end{fact}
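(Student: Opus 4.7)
The plan is to first verify the identities $T=J_A$ and $2T-\Id=R_A$ by a direct manipulation of the definition, and then establish the equivalence chain in \cref{eq:corresp} by combining Minty's theorem with the characterization of firmly nonexpansive maps as $\tfrac{1}{2}$-averaged.

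For the identities, I would start from $A=T^{-1}-\Id$, which gives $\Id+A=T^{-1}$; inverting both sides yields $J_A=(\Id+A)^{-1}=T$, and then $R_A=2J_A-\Id=2T-\Id$ by definition. This also shows that $T$ is automatically a single-valued mapping from $X$ into $X$ whose graph coincides with $\gra J_A$.

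For the equivalence $T$ firmly nonexpansive $\siff$ $2T-\Id$ nonexpansive, I would invoke \cref{d:buttout}\cref{d:fne}: firm nonexpansiveness is by definition $\tfrac{1}{2}$-averagedness, so writing $T=\tfrac{1}{2}\Id+\tfrac{1}{2}N$ with $N=2T-\Id$ exhibits $N$ as nonexpansive and vice versa. For $A$ maximally monotone $\siff$ $T=J_A$ firmly nonexpansive, the forward direction is a standard computation: if $(x,u),(y,v)\in\gra J_A$, then by \cref{lem:gr:RA:i} we have $(u,x-u),(v,y-v)\in\gra A$, and monotonicity of $A$ gives $\scal{u-v}{(x-u)-(y-v)}\ge 0$, which on rearrangement yields $\normsq{Tx-Ty}+\normsq{(\Id-T)x-(\Id-T)y}\le\normsq{x-y}$. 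For the converse, assume $T$ is firmly nonexpansive on $X$; then $T$ is in particular defined everywhere, so $\ran(\Id+A)=\ran(T^{-1})=\dom T=X$, while monotonicity of $A$ follows by reversing the same calculation (firm nonexpansiveness of $T$ forces $\scal{u-v}{(x-u)-(y-v)}\ge 0$ for all pairs $(u,x-u),(v,y-v)\in\gra A$); an application of \cref{eq:Minty:2} in \cref{thm:minty} then delivers maximal monotonicity.

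The main conceptual point, which is the one place care is needed, is the converse direction of the first equivalence: one must rule out the possibility that $A$ is merely monotone but not maximal. This is precisely where the surjectivity statement in Minty's theorem is used, and why it is essential that $T$ be defined on all of $X$ rather than on a proper subset.
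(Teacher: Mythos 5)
Your proof is correct, and it fills in more detail than the paper does: the paper simply cites \cite[Theorem~2]{EckBer} and \cite[Corollary~23.11]{BC2017} for this fact rather than proving it. Your argument is in substance the standard proof found in those references. The algebraic identity $\Id+A=T^{-1}$ gives $J_A=T$ and $R_A=2T-\Id$; the equivalence between firm nonexpansiveness and $\tfrac{1}{2}$-averagedness handles the second biconditional; and your handling of the first biconditional is right, including the key observation that, because $T$ is by hypothesis defined on all of $X$, one automatically has $\ran(\Id+A)=\ran(T^{-1})=\dom T=X$, so monotonicity of $A$ upgrades to maximal monotonicity via \cref{eq:Minty:2}. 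That is indeed the one place care is needed, and you have isolated it correctly. One small quibble on phrasing only: you say the identity $J_A=T$ ``also shows that $T$ is automatically a single-valued mapping from $X$ into $X$''; that is a standing hypothesis of the Fact rather than a consequence of the identity, and it is precisely this hypothesis that makes Minty's surjectivity criterion available in the converse direction.
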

						\begin{proof}
							See, e.g.,
							\cite[Theorem~2]{EckBer} or \cite[Corollary~23.11]{BC2017}.
						\end{proof}	
						
						We conclude this section with the following fact 
						concerning the asymptotic behaviour 
						of iterates of strongly nonexpansive mappings. 
						
						\begin{fact}
							\label{fact:T:sne:converges}
							Let $T\colon X\to X$ be strongly nonexpansive.
							Suppose that $\fix T\neq \fady$.
							Let $x_0\in X$. Then there exists $\overline{x}\in \fix T$
							such that $(T^nx_0)_\nnn$ converges weakly to $\overline{x}$.
						\end{fact}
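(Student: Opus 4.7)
The plan is to combine Fejér monotonicity with the defining implication of strong nonexpansiveness to force the increments $T^{n+1}x_0-T^nx_0$ to go to zero, and then invoke the Browder--Opial machinery to upgrade boundedness to weak convergence to a fixed point.

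\textbf{Step 1: boundedness via Fejér monotonicity.} Fix $p\in\fix T$ (nonempty by assumption). Since a strongly nonexpansive mapping is in particular nonexpansive, $\|T^{n+1}x_0-p\|=\|T(T^nx_0)-Tp\|\le\|T^nx_0-p\|$. Hence the sequence $(\|T^nx_0-p\|)_{n\in\NN}$ is decreasing and bounded below by $0$, so it converges; in particular $(T^nx_0)_{n\in\NN}$ is bounded and Fejér monotone with respect to $\fix T$.

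\textbf{Step 2: vanishing increments.} Apply the strong-nonexpansiveness implication \cref{eq:def:sn} with $x_n:=T^nx_0$ and $y_n:=p$. Then $x_n-y_n=T^nx_0-p$ is bounded by Step 1, and
\begin{equation*}
\|x_n-y_n\|-\|Tx_n-Ty_n\|=\|T^nx_0-p\|-\|T^{n+1}x_0-p\|\longrightarrow 0
\end{equation*}
because the monotone sequence in Step 1 is Cauchy. Consequently
\begin{equation*}
(x_n-y_n)-(Tx_n-Ty_n)=T^nx_0-T^{n+1}x_0\longrightarrow 0.
\end{equation*}

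\textbf{Step 3: weak cluster points lie in $\fix T$ and uniqueness via Opial.} Since $(T^nx_0)_{n\in\NN}$ is bounded, it has weak cluster points. Let $\overline{x}$ be any such cluster point, say $T^{n_k}x_0\weakly\overline{x}$. By Step 2, $(\Id-T)(T^{n_k}x_0)\to 0$ strongly; the demiclosedness principle for the nonexpansive mapping $T$ (Browder) then yields $\overline{x}\in\fix T$. To conclude that the full sequence converges weakly to a single $\overline{x}\in\fix T$, one invokes Opial's lemma: the combination of Fejér monotonicity of $(T^nx_0)_{n\in\NN}$ with respect to $\fix T$ (Step 1) and the fact that every weak cluster point lies in $\fix T$ (just established) forces a unique weak cluster point, whence $T^nx_0\weakly\overline{x}$.

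\textbf{Main obstacle.} The argument is essentially a packaging of standard ingredients (Fejér monotonicity, Browder demiclosedness, Opial's lemma), and none of these steps requires real calculation. The only genuinely \emph{substantive} point is Step 2: one must recognize that for the distinguished choice $y_n\equiv p\in\fix T$ the hypotheses of \cref{eq:def:sn} are automatically met, which is precisely the reason strong nonexpansiveness was tailored to control the asymptotic regularity of iterates. The remaining passage from asymptotic regularity plus boundedness to weak convergence is routine.
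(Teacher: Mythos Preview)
Your argument is correct: Fej\'er monotonicity gives boundedness and convergence of $(\|T^nx_0-p\|)_{n\in\NN}$, the strong-nonexpansiveness implication with $y_n\equiv p$ yields asymptotic regularity $T^nx_0-T^{n+1}x_0\to 0$, and then Browder's demiclosedness principle together with the standard Fej\'er/Opial machinery delivers weak convergence to a fixed point. Each step is valid in the Hilbert setting of the paper.

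Concerning comparison: the paper does not actually prove this statement---it is recorded as a \emph{Fact} and the proof consists solely of a citation to \cite[Corollary~1.1]{BR77}. Your write-up is essentially the standard modern presentation of the Bruck--Reich argument (as found, e.g., in \cite[Theorem~5.14 and Proposition~5.28]{BC2017}), so there is no genuine difference in approach to discuss; you have simply unpacked what the paper leaves as a black-box reference.
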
	
						\begin{proof}
							See \cite[Corollary~1.1]{BR77}.
						\end{proof}

						\section{Strongly nonexpansive and \ssnonex\  mappings}
						\label{sec:3}
						The main goal of this section is to show that the
						notion of uniform monotonicity of an operator corresponds to
					the notion of \emph{super strong} nonexpansiveness 
						(see \cref{def:ssexp} below)
						of its reflected resolvent.
						Throughout we assume that 
						\begin{empheq}[box=\mybluebox]{equation}
							\text{$A\colon X\rras X$ and $B\colon X\rras X$ are
								maximally monotone.
							}
						\end{empheq}
						
						We start by defining a new subclass of nonexpansive mappings.
						\begin{definition}
							\label{def:ssexp}
							Let $T\colon X\to X$. We say that $T$ is 
							\emph{\ssnonex}\ if $T$ is nonexpansive and we have the implication
							\begin{equation}
								\norm{x_n-y_n}^2-	
								\norm{Tx_n-Ty_n}^2\to 0\RA (x_n-y_n)-(Tx_n-T y_n) \to 0. 
							\end{equation}	
						\end{definition}	
						
						\begin{proposition}
							\label{lem:une:sne}
							Let $T\colon X\to X$. Suppose  $T$ is 
							\ssnonex. Then $T$ is strongly nonexpansive.
						\end{proposition}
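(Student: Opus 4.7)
The plan is to reduce the strong nonexpansiveness condition to the super strong nonexpansiveness condition by showing that, under the hypothesis of the former, the hypothesis of the latter is automatically met. Concretely, assume $(x_n - y_n)_\nnn$ is bounded and $\norm{x_n - y_n} - \norm{Tx_n - Ty_n} \to 0$; I need to verify that $\norm{x_n - y_n}^2 - \norm{Tx_n - Ty_n}^2 \to 0$, and then the conclusion $(x_n - y_n) - (Tx_n - Ty_n) \to 0$ follows immediately from \cref{def:ssexp}.

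The key algebraic identity is the factorization
\begin{equation*}
\norm{x_n - y_n}^2 - \norm{Tx_n - Ty_n}^2
= \bigl(\norm{x_n - y_n} - \norm{Tx_n - Ty_n}\bigr)\bigl(\norm{x_n - y_n} + \norm{Tx_n - Ty_n}\bigr).
\end{equation*}
The first factor tends to $0$ by assumption. For the second factor, nonexpansiveness of $T$ gives $\norm{Tx_n - Ty_n} \le \norm{x_n - y_n}$, so
\begin{equation*}
\norm{x_n - y_n} + \norm{Tx_n - Ty_n} \le 2\norm{x_n - y_n},
\end{equation*}
which is bounded by the boundedness of $(x_n - y_n)_\nnn$. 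The product of a null sequence and a bounded sequence is null, so $\norm{x_n - y_n}^2 - \norm{Tx_n - Ty_n}^2 \to 0$, and super strong nonexpansiveness concludes the argument.

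There is no substantive obstacle here; the proposition is essentially a direct comparison of the two hypotheses, relying on the fact that boundedness of $(x_n - y_n)_\nnn$ in the definition of strong nonexpansiveness lets one trade a difference of norms for a difference of squared norms. The converse direction (which is \emph{not} claimed here and is presumably false in general) would require control of $\norm{x_n - y_n} + \norm{Tx_n - Ty_n}$ from below, which is not available without boundedness assumptions in the reverse direction.
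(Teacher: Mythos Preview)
Your proof is correct and follows essentially the same approach as the paper's: both use the factorization $\norm{x_n-y_n}^2-\norm{Tx_n-Ty_n}^2=(\norm{x_n-y_n}-\norm{Tx_n-Ty_n})(\norm{x_n-y_n}+\norm{Tx_n-Ty_n})$, observe that the first factor tends to $0$ by hypothesis while the second is bounded via nonexpansiveness and boundedness of $(x_n-y_n)_\nnn$, and then invoke the super strong nonexpansiveness of $T$.
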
	
						\begin{proof}
							Let $(x_n)_\nnn$ and $(y_n)_\nnn$ be sequences in $X$ such that 
							$(x_n-y_n)_\nnn$ is bounded and suppose that $\norm{x_n-y_n}-\norm{Tx_n-Ty_n}\to 0$.
							We claim that  	
							\begin{equation}
								\norm{x_n-y_n}^2-\norm{Tx_n-Ty_n}^2\to 0.
							\end{equation}	
							Indeed, the nonexpansiveness of $T$ implies that $(Tx_n-Ty_n)_\nnn$ is bounded.
							Now
							$\norm{x_n-y_n}^2-\norm{Tx_n-Ty_n}^2
							=(\norm{x_n-y_n}-\norm{Tx_n-Ty_n})(\norm{x_n-y_n}+\norm{Tx_n-Ty_n})
							\to 0$.
							Since $T $ is \ssnonex\ we conclude that 
							$(x_n-y_n)-(Tx_n-T y_n) \to 0$. Hence, $T$ is strongly nonexpansive as claimed.
						\end{proof}

						The converse of \cref{lem:une:sne} is not true in general as we illustrate
						in \cref{ex:umoce}  below. 	
						Nonetheless, when $X=\RR$ the converse of \cref{lem:une:sne} holds as we next illustrate
						in \cref{prop-sne-r}. We will use the following simple observation.
						Let $(a,b)\in X\times X$. Then
						\begin{equation}
							\label{e:observ}
							(\norm{a}-\norm{b})^2\le  \abs{\norm{a}^2-\norm{b}^2}.
						\end{equation}	
						\begin{proposition}[{\bf the case of the real line}] 
							\label{prop-sne-r} 
							Suppose $T\colon\RR \to \RR$ is strongly nonexpansive, 
							then $T$ is  \ssnonex.
						\end{proposition}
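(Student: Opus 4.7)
The plan is to argue by contradiction and reduce the \ssnonex\ property---which carries no a~priori boundedness assumption on $(x_n - y_n)$---to the given strong nonexpansiveness of $T$ on a cleverly constructed bounded auxiliary sequence. Suppose $(x_n),(y_n)\subseteq\RR$ satisfy $\|x_n-y_n\|^2 - \|Tx_n-Ty_n\|^2 \to 0$ while $(x_n-y_n) - (Tx_n-Ty_n)\not\to 0$; after extracting a subsequence, $|(x_n-y_n)-(Tx_n-Ty_n)|\ge\varepsilon$ for some $\varepsilon>0$. Observation~\eqref{e:observ} immediately yields $\|x_n-y_n\|-\|Tx_n-Ty_n\|\to 0$. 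If $(x_n-y_n)$ has a bounded subsequence, then strong nonexpansiveness of $T$ applies on that subsequence and contradicts the $\varepsilon$ lower bound. So after a further subsequence $|x_n-y_n|\to\infty$, whence also $|Tx_n-Ty_n|\to\infty$. Writing $a_n = x_n-y_n$ and $b_n = Tx_n-Ty_n$ and fixing signs along another subsequence, I may assume $a_n\to+\infty$; if $b_n>0$ on a subsequence, then $a_n-b_n = |a_n|-|b_n|\to 0$, again contradicting the $\varepsilon$ bound. Only the sign-flip regime $b_n<0$ with $a_n + b_n\to 0$ survives.

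In this regime, $T$ reverses the endpoints of $[y_n,x_n]$ while essentially achieving the maximal separation permitted by nonexpansiveness, so it must behave locally like slope $-1$ on this interval, and this is what will contradict strong nonexpansiveness applied to unit-length pairs. To formalise, set $k_n := \lfloor a_n\rfloor\to\infty$, $z_n^{(j)} := y_n + j$ for $j = 0,1,\dots,k_n$, and $\delta_n^{(j)} := T(z_n^{(j+1)}) - T(z_n^{(j)})$. Nonexpansiveness gives $|\delta_n^{(j)}|\le 1$, while telescoping together with $|T(z_n^{(k_n)}) - Tx_n|\le 1$ yields $\sum_{j=0}^{k_n-1}\delta_n^{(j)} = b_n + O(1) = -a_n + o(1)$. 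Splitting this sum into its positive part $P_n$ and negative part $N_n$, the constraints $P_n + N_n \le k_n$ (from nonexpansiveness) together with $P_n - N_n = b_n + O(1)\approx -a_n$ force $P_n$ to remain bounded and $N_n \ge a_n - O(1)\approx k_n$. Since this $N_n$-mass is distributed over at most $k_n$ indices with each $|\delta_n^{(j)}|\le 1$, a pigeonhole/averaging argument extracts a sequence of indices $j_n$ with $\delta_n^{(j_n)}\to -1$.

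To conclude, the auxiliary sequences $x_n' := z_n^{(j_n+1)}$ and $y_n' := z_n^{(j_n)}$ satisfy $x_n' - y_n' = 1$ (bounded) and $Tx_n' - Ty_n' = \delta_n^{(j_n)}\to -1$, so $|x_n' - y_n'| - |Tx_n' - Ty_n'|\to 0$. Strong nonexpansiveness of $T$ then forces $(x_n' - y_n') - (Tx_n' - Ty_n') \to 0$, but this expression equals $1 - \delta_n^{(j_n)}\to 2$, the desired contradiction. The main obstacle will be the averaging step: one must quantify precisely that since the negative mass $N_n$ essentially saturates the total available variation while being spread over a growing number of summands each bounded by $1$, a substantial fraction of the $\delta_n^{(j)}$ must cluster at the extremal value $-1$, permitting the extraction of a single index $j_n$ for each $n$ with the required limit behaviour.
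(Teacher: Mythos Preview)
Your argument is correct and runs parallel to the paper's up through the reduction to the sign-flip regime ($a_n=x_n-y_n\to+\infty$, $b_n=Tx_n-Ty_n<0$, $a_n+b_n\to 0$); thereafter the two proofs diverge.

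The paper introduces a \emph{single} auxiliary point $z_n=x_n+1$ inside $[y_n,x_n]$ (in their notation, $z_n=x_n+1$ with roles of $x_n,y_n$ swapped). From the exact additivity $|x_n-y_n|=|x_n-z_n|+|z_n-y_n|$ on $\RR$ together with the triangle inequality on the $T$-images, they deduce $|Tz_n-Tx_n|-|z_n-x_n|\to 0$; strong nonexpansiveness on this bounded pair then forces $Tz_n-Tx_n\to+1$. The positivity of this increment, combined with nonexpansiveness on the remaining segment $[z_n,y_n]$ and the assumed sign $Ty_n-Tx_n<0$, yields $|Tx_n-Ty_n|\le|x_n-y_n|-1$, contradicting $|x_n-y_n|-|Tx_n-Ty_n|\to 0$.

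Your route instead subdivides the whole interval into $k_n\approx a_n$ unit steps and uses averaging to locate a step where the $T$-increment $\delta_n^{(j_n)}\to-1$, then applies strong nonexpansiveness on that step. This is a genuinely different auxiliary construction. The paper's version is shorter and needs only one midpoint; your version is slightly longer but makes the ``$T$ behaves like slope $-1$'' intuition explicit. Incidentally, the averaging step you flag as the main obstacle is immediate once you observe that $\sum_{j=0}^{k_n-1}(1+\delta_n^{(j)})=k_n+\sum_j\delta_n^{(j)}=O(1)$ is a sum of $k_n$ nonnegative terms, so its minimum is $O(1/k_n)\to 0$; the positive/negative splitting you sketch is unnecessary. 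Also, your ``$-a_n+o(1)$'' after the telescope should read ``$-a_n+O(1)$'' (the tail term contributes a bounded, not vanishing, error), though only the $O(1)$ bound is needed.
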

						
						\begin{proof}
							Suppose for eventual contradiction that
							$T$ is strongly nonexpansive, but $T$ is not \ssnonex. 
							Then we have sequences $(x_n)_\nnn$ and $(y_n)_\nnn$ in $\RR$ so that
							\begin{equation} \label{prop-sne-r-a}
								|x_n - y_n|^2 - |Tx_n - Ty_n|^2 \to 0
							\end{equation}
							but 
							\begin{equation} \label{prop-sne-r-b}
								(x_n - y_n) - (Tx_n - Ty_n) \not\to 0.
							\end{equation}
							It follows from the nonexpansiveness of
							$T$ and  \cref{e:observ}
							applied with $(a,b,X)$ replaced by $(x_n-y_n, Tx_n-Ty_n,\RR)$ 
							that $(\forall \nnn )$
							$0\le (|x_n - y_n| - |Tx_n - Ty_n| )^2\le |x_n - y_n|^2 - |Tx_n - Ty_n|^2 $.
							Therefore \cref{prop-sne-r-a}  implies that 
							\begin{equation} 
								\label{prop-sne-r-c}
								|x_n - y_n| - |Tx_n - Ty_n| \to 0,
							\end{equation}
							and because $T$ is strongly nonexpansive, 
							\cref{prop-sne-r-b} and 
							\cref{prop-sne-r-c}  imply $(x_n - y_n)$ is not bounded.
							Thus, without lost of generality,  and swapping $x_n$ and $y_n$ as necessary, 
							we may and do assume $(\forall \nnn )$ $y_n - x_n > 1$. 
							
							If $(Ty_n - Tx_n)_\nnn$ is eventually positive, 
							that is, the same sign as $(y_n - x_n)$ for all large $n$, 
							then \cref{prop-sne-r-c} would imply
							that $(y_n - x_n) - (Ty_n - Tx_n) \to 0$ in contradiction with \cref{prop-sne-r-b}. 
							Therefore, after passing to a subsequence and relabelling if necessary, we may and do assume that
							\begin{equation}
								\label{e:nov23:iv}
								(\forall \nnn)\;\;Ty_n - Tx_n < 0.
							\end{equation}
							Now consider $z_n = x_n + 1$, so $x_n < z_n < y_n$. 
							Observe that 
							\begin{equation}
								\label{e:nov23:ii}
								|x_n - y_n| = |x_n - z_n| + |z_n - y_n|=1+|z_n - y_n|.
							\end{equation}
							Hence, because $T$ is nonexpansive we learn,
							in view of the triangle inequality, \cref{e:nov23:ii}
							and  \cref{prop-sne-r-c}, that
							\begin{subequations}
								\begin{align}
									0&\le |x_n - z_n| -| Tx_n - Tz_n| + |z_n - y_n| -|Tz_n - Ty_n|
									\\
									&=  |x_n - y_n|-| Tx_n - Tz_n|-|Tz_n - Ty_n|\le |x_n - y_n|- |Tx_n - Ty_n| \to 0.
								\end{align}
								\label{e:nov23:i}
							\end{subequations}
							It follows from the nonexpansiveness of $T$ 
							and \cref{e:nov23:i} that $	|Tz_n - Tx_n| - |z_n - x_n| \to 0$.
							Consequently, because  $T$ is strongly nonexpansive, we have 
							$(Tz_n - Tx_n) - 1 =(Tz_n - Tx_n) - (z_n - x_n)\to 0$. 
							After passing to a subsequence and relabelling if necessary, 
							we may and do assume that
							\begin{equation}
								\label{e:nov23:iii}
								(\forall \nnn) \; \;Tz_n \ge Tx_n.
							\end{equation}
							Because $T$ is nonexpansive, in view
							of \cref{e:nov23:ii}
							we have $(\forall \nnn)$ $|Ty_n - Tz_n| \le |y_n - z_n| = |y_n - x_n| - 1$.
							This and  \cref{e:nov23:iii} imply
							$
							(\forall \nnn) \; \;Ty_n \ge Tz_n - (|y_n - x_n| - 1) \ge Tx_n - |y_n - x_n|  + 1.
							$
							Therefore, by \cref{prop-sne-r-c} we have $(\forall \nnn)$ 
							$
							-|y_n - x_n| + 1 \le Ty_n - Tx_n < 0
							$.
							That is  $(\forall \nnn)$ $|x_n - y_n| - |Tx_n - Ty_n|  \ge 1$, and this contradicts \cref{prop-sne-r-c}. This completes the proof.		
						\end{proof}
						
						We now turn to the correspondence
						between uniformly monotone operators
						and \ssnonex\ operators.  	We start with the following lemma that provides a characterization 
						of uniformly monotone operators via their reflected resolvents.
						
						\begin{lemma}
							\label{prop:um:sn:i}
							The following hold:
							\begin{enumerate}
								\item
								\label{prop:um:sn:i:i}
								Suppose that	$A$ is uniformly monotone with modulus $\phi$.	
								Then
								$(\forall (x,y)\in X\times X)$
								\begin{equation}
									\label{eq:ref:res:um}
									\norm{x-y}^2-\norm{R_Ax-R_Ay}^2\ge 4\phi\big(\tfrac{1}{2}\norm{(x-y)+(R_Ax-R_Ay)}\big)
									=4\phi\big( \norm{J_Ax-J_Ay}\big).
								\end{equation}	
								\item
								\label{prop:um:sn:i:ii}
								Suppose that there exists a modulus function  
								$\phi$
								such that 	
								$(\forall (x,y)\in X\times X)$
								\begin{equation}
									\label{eq:assmp:RA:mod}
									\norm{x-y}^2-\norm{R_Ax-R_Ay}^2\ge \phi\big(\norm{(x-y)+(R_Ax-R_Ay)}\big).
								\end{equation}		
								Then $A$ is uniformly monotone with a
								modulus $\tfrac{1}{4}\phi\circ (2(\cdot))$.
							\end{enumerate}
						\end{lemma}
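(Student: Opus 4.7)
The whole lemma is an algebraic transcription, via the graph identification \cref{eq:gr:A:RA}, of the uniform monotonicity inequality \cref{eq:def:um} into an inequality involving $R_A$. The two central ingredients will be: (a) the polarization-type identity $\|a+b\|^2-\|a-b\|^2=4\scal{a}{b}$; and (b) the fact that $(x,u)\in\gra R_A\iff \bigl(\tfrac12(x+u),\tfrac12(x-u)\bigr)\in\gra A$, with the additional observation that $\tfrac12(x+u)=J_A x$ when $u=R_A x$.

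\textbf{Part \ref{prop:um:sn:i:i}.} Given $x,y\in X$, set $u:=R_A x$ and $v:=R_A y$. By \cref{eq:gr:A:RA}, the two pairs $\bigl(\tfrac12(x+u),\tfrac12(x-u)\bigr)$ and $\bigl(\tfrac12(y+v),\tfrac12(y-v)\bigr)$ lie in $\gra A$. I would then plug these into \cref{eq:def:um}, using the polarization identity to compute the inner product on the left-hand side as
\[
\Bscal{\tfrac{(x-y)+(u-v)}{2}}{\tfrac{(x-y)-(u-v)}{2}}=\tfrac14\bigl(\|x-y\|^2-\|u-v\|^2\bigr),
\]
and identify the argument of $\phi$ on the right-hand side as $\tfrac12\|(x-y)+(u-v)\|=\|J_A x-J_A y\|$. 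Multiplying by $4$ yields \cref{eq:ref:res:um}.

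\textbf{Part \ref{prop:um:sn:i:ii}.} The argument is the reverse passage. Given $(x,x^*),(y,y^*)\in\gra A$, the equivalence \cref{eq:gr:A:RA} tells me that $R_A(x+x^*)=x-x^*$ and $R_A(y+y^*)=y-y^*$. Substituting these two pairs into \cref{eq:assmp:RA:mod} and setting $d:=x-y$, $d^*:=x^*-y^*$, the left-hand side becomes $\|d+d^*\|^2-\|d-d^*\|^2=4\scal{d}{d^*}$ (polarization again), and the argument of $\phi$ on the right becomes $\|2d\|=2\|x-y\|$. Dividing by $4$ gives $\scal{x-y}{x^*-y^*}\ge \tfrac14\phi(2\|x-y\|)$, which is exactly uniform monotonicity of $A$ with modulus $\tfrac14\phi\circ(2(\cdot))$.

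\textbf{Main obstacle.} There is really no obstacle of substance: the proof is a direct two-line algebraic manipulation on each side, and the only care required is to track the factor of $2$ between $\tfrac12\|(x-y)+(R_A x-R_A y)\|$ and $\|J_A x-J_A y\|$ (equality because $J_A=\tfrac12(\Id+R_A)$), and to verify that the function $t\mapsto\tfrac14\phi(2t)$ inherits the modulus properties (it is clearly increasing and vanishes only at $0$ whenever $\phi$ does). These are the only bookkeeping points worth mentioning.
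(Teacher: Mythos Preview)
Your proposal is correct and follows essentially the same route as the paper: both directions are obtained by passing between $\gra A$ and $\gra R_A$ via \cref{eq:gr:A:RA} and expanding $\scal{a}{b}=\tfrac14(\norm{a+b}^2-\norm{a-b}^2)$, with the identification $J_A=\tfrac12(\Id+R_A)$ supplying the equality in \cref{eq:ref:res:um}. The paper's write-up is terser but the substance is identical.
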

						\begin{proof}
							\cref{prop:um:sn:i:i}: It follows from \cref{eq:gr:A:RA} that
							$	\gra  A=\menge{\tfrac{1}{2}(x+R_Ax,x-R_Ax)}{x\in X}$.
							Now combine this with  \cref{eq:def:um}. 
							\cref{prop:um:sn:i:ii}:
							Let $\{(x,x^*),(y,y^*)\}\subseteq \gra A$
							and observe that \cref{eq:gr:A:RA} implies that 
							\begin{equation}
								\label{eq:gra:RA:opp}
								\{(x+x^*,x-x^*),(y+y^*,y-y^*)\}\subseteq \gra R_A.
							\end{equation}		
							Now
							\begin{subequations}
								\label{eq:un:RA}
								\begin{align}
									&\quad\scal{x-y}{x^*-y^*}
									\\
									&=\tfrac{1}{4}\scal{x+x^*-(y+y^*)+x-x^*-(y-y^*)}{
										x+x^*-(y+y^*)-(x-x^*-(y-y^*))}
									\\
									&=\tfrac{1}{4}(\norm{x+x^*-(y+y^*)}^2-\norm{x-x^*- (   y-y^*)}^2)
									\ge \tfrac{1}{4}\phi (2\norm{x-y}),
									\label{eq:un:RA:c}
								\end{align}	
							\end{subequations}
							where the inequality in \cref{eq:un:RA:c}
							follows from 
							combining \cref{eq:assmp:RA:mod} 
							applied with $(x,y)$ replaced 
							with $(x+x^*,y+y^*)$
						 and  \cref{eq:gra:RA:opp}.
							The proof is complete.
						\end{proof}

						\begin{proposition}
							\label{prop:um:sne}
							Consider the following statements:
							\begin{enumerate}
								\item
								\label{prop:um:sne:i}
								$A$ is uniformly monotone.
								\item	
								\label{prop:um:sne:ii}
								$-R_A$ is \ssnonex.
								\item	
								\label{prop:um:sne:iii}
								$-R_A$ is strongly nonexpansive.
							\end{enumerate}
							Then 
							\cref{prop:um:sne:i}$\siff$\cref{prop:um:sne:ii}$\RA$\cref{prop:um:sne:iii}.
						\end{proposition}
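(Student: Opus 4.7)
The plan is to derive the three implications by leveraging the characterization of uniform monotonicity through the reflected resolvent established in \cref{prop:um:sn:i}, together with \cref{lem:une:sne}. The implication \cref{prop:um:sne:ii}$\RA$\cref{prop:um:sne:iii} is immediate from \cref{lem:une:sne}. The remaining content is the equivalence \cref{prop:um:sne:i}$\siff$\cref{prop:um:sne:ii}.

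For \cref{prop:um:sne:i}$\RA$\cref{prop:um:sne:ii}, assume $A$ is uniformly monotone with modulus $\phi$. Nonexpansiveness of $-R_A$ follows from \cref{fact:corres}. Now suppose $\|x_n-y_n\|^2-\|(-R_A)x_n-(-R_A)y_n\|^2=\|x_n-y_n\|^2-\|R_Ax_n-R_Ay_n\|^2\to 0$. Invoking \cref{prop:um:sn:i}\cref{prop:um:sn:i:i} gives
\begin{equation*}
4\phi\big(\tfrac{1}{2}\|(x_n-y_n)+(R_Ax_n-R_Ay_n)\|\big)\to 0.
\end{equation*}
Because $\phi$ is increasing and vanishes only at $0$, a quick contradiction argument (extract a subsequence bounded below by $\varepsilon>0$ and use monotonicity of $\phi$) forces $\|(x_n-y_n)+(R_Ax_n-R_Ay_n)\|\to 0$, which is precisely the super strong nonexpansiveness conclusion $(x_n-y_n)-((-R_A)x_n-(-R_A)y_n)\to 0$.

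For \cref{prop:um:sne:ii}$\RA$\cref{prop:um:sne:i}, the strategy is to synthesize a modulus from the super strong nonexpansiveness of $-R_A$ and then apply \cref{prop:um:sn:i}\cref{prop:um:sn:i:ii}. Define
\begin{equation*}
\phi(t):=\inf\big\{\|x-y\|^2-\|R_Ax-R_Ay\|^2 : (x,y)\in X\times X,\ \|(x-y)+(R_Ax-R_Ay)\|\ge t\big\},
\end{equation*}
with the convention $\inf\varnothing=+\infty$. Nonexpansiveness of $R_A$ (\cref{fact:corres}) gives $\phi\ge 0$; $\phi$ is non-decreasing because the feasible set shrinks as $t$ grows; and $\phi(0)=0$ by choosing $x=y$. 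The crucial step is $\phi(t)>0$ for every $t>0$: otherwise an infimizing sequence would produce $(x_n),(y_n)$ with $\|(x_n-y_n)+(R_Ax_n-R_Ay_n)\|\ge t$ yet $\|x_n-y_n\|^2-\|R_Ax_n-R_Ay_n\|^2\to 0$, contradicting the super strong nonexpansiveness of $-R_A$. Hence $\phi$ is a genuine modulus, the hypothesis of \cref{prop:um:sn:i}\cref{prop:um:sn:i:ii} holds by construction, and $A$ is uniformly monotone with modulus $\tfrac{1}{4}\phi(2\,\cdot)$.

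The main obstacle is exactly this construction step in \cref{prop:um:sne:ii}$\RA$\cref{prop:um:sne:i}: showing that the canonical infimum-based candidate $\phi$ is strictly positive on $(0,\infty)$, which is precisely where the super strong nonexpansiveness hypothesis is consumed. The remaining ingredients are bookkeeping with \cref{prop:um:sn:i} and \cref{lem:une:sne}.
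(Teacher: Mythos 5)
Your proof is correct, and the three implications are handled soundly. For \cref{prop:um:sne:i}$\RA$\cref{prop:um:sne:ii} and \cref{prop:um:sne:ii}$\RA$\cref{prop:um:sne:iii} you take essentially the same route as the paper: invoke \cref{prop:um:sn:i}\cref{prop:um:sn:i:i} together with the fact that a modulus is non-decreasing and vanishes only at $0$, and then apply \cref{lem:une:sne}.

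The genuinely different step is \cref{prop:um:sne:ii}$\RA$\cref{prop:um:sne:i}. The paper argues by contradiction: assuming $A$ is not uniformly monotone, it extracts sequences $(a_n,a_n^*),(b_n,b_n^*)$ in $\gra A$ with $\scal{a_n-b_n}{a_n^*-b_n^*}\to 0$ yet $\norm{a_n-b_n}\ge\epsilon$, rewrites everything via Minty's parametrization $x_n=a_n+a_n^*$, $y_n=b_n+b_n^*$, and shows $\norm{x_n-y_n}^2-\norm{R_Ax_n-R_Ay_n}^2\to 0$ while $(x_n-y_n)+(R_Ax_n-R_Ay_n)=2(a_n-b_n)\not\to 0$, contradicting super strong nonexpansiveness. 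You instead construct a concrete candidate modulus $\phi$ as an infimum over the constraint set $\norm{(x-y)+(R_Ax-R_Ay)}\ge t$, verify that it is non-decreasing and vanishes only at $0$ (the strict positivity on $\left]0,+\infty\right[$ being exactly where the \ssnonex\ hypothesis is consumed), and then hand $\phi$ to \cref{prop:um:sn:i}\cref{prop:um:sn:i:ii}. This is a clean reuse of the lemma: the Minty computation is done once in \cref{prop:um:sn:i} rather than being repeated in the proof, and it also yields an explicit formula for a modulus. One small point to make explicit: the constraint set can be empty for large $t$ when $\dom A$ is bounded, since $(x-y)+(R_Ax-R_Ay)=2(J_Ax-J_Ay)$ ranges over differences of points in $\dom A$; your convention $\inf\varnothing=+\infty$ handles this, and it is consistent with the codomain $\left[0,+\infty\right]$ of a modulus. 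Both approaches are valid; the paper's is arguably more self-contained, yours is more economical.
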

						
						\begin{proof}
							\cref{prop:um:sne:i}$\RA$\cref{prop:um:sne:ii}: 
							Let $\phi$ be a modulus function for $A$.
							Recalling \cref{eq:corresp} we have $-R_A$,
							as is 	$R_A$, is nonexpansive.
							Let $(x_n-y_n)_\nnn$ be a sequence in $X$
							and suppose that $\norm{x_n-y_n}^2-\norm{R_Ax_n-R_Ay_n}^2\to 0$.
							Combining this with \cref{prop:um:sn:i}, we learn that
							$0\le \phi(\norm{(x_n+R_Ax_n)-(y_n+R_Ay_n)})\to 0$.
							Since $\phi$ is increasing and vanishes only at $0$
							we must have  $(x_n+R_Ax_n)-(y_n+R_Ay_n)\to 0$,
							hence 
							$-R_A$ is \vsne. 
							
							\cref{prop:um:sne:ii}$\RA$\cref{prop:um:sne:i}: 
							Suppose $A$ is not uniformly monotone but $-R_A$ is \vsne. 
							Because $A$ is not uniformly monotone, there exist sequences 
							$(a_n,  a^*_n)_\nnn$ and $ (b_n, b^*_n)_\nnn$
							in $ \gra A$ and $\epsilon > 0$ such that
							\begin{equation} 
								\label{prop:vsne:um:i}
								\langle a_n - b_n, a^*_n -b^*_n \rangle \to 0
								\ \ 
								\mbox{but}\ \ (\forall \nnn) \|a_n - b_n\| \ge \epsilon.
							\end{equation}
							Set $(\forall \nnn)$ $x_n=a_n+a^*_n$ and $y_n=b_n+b^*_n$
							and observe that Minty's parametrization of $\gra A$ implies that
							$(\forall \nnn)$ 
							$(a_n,a^*_n,b_n,b^*_n)
							=\tfrac{1}{2}(x_n+R_Ax_n,x_n-R_Ax_n,y_n+R_Ay_n,y_n-R_Ay_n)$.
							Therefore 
							\begin{subequations}
								\begin{align}
									&\quad\|x_n - y_n\|^2 - \|-R_A x_n -(- R_A y_n)\|^2 
									\nonumber
									\\
									&=\|x_n - y_n\|^2 - \|R_A x_n - R_A y_n\|^2
									\\
									&=\scal{x_n - y_n+(R_A x_n - R_A y_n)}{x_n - y_n-(R_A x_n - R_A y_n)}
									\\
									&=\scal{x_n + R_A x_n -(y_n+ R_A y_n)}{x_n - R_A x_n -(y_n- R_A y_n)}
									\\
									&=4\scal{a_n - b_n}{ a^*_n -b^*_n}\to 0,
									\label{prop:vsne:um:e}
								\end{align}
								\label{prop:vsne:um:ii} 
							\end{subequations}	
							where the limit in \cref{prop:vsne:um:e} 
							follows from \cref{prop:vsne:um:i}.
							Because $-R_A$ is \vsne\ \cref{prop:vsne:um:ii} implies 
							\begin{equation}
								\label{prop:vsne:um:iii}		
								a_n-b_n=\tfrac{1}{2}((x_n + R_A x_n) - (y_n + R_A y_n))
								=\tfrac{1}{2}((x_n - y_n) - (-R_A x_n -(-R_A y_n)) )\to 0.
							\end{equation}	
							This contradicts (\ref{prop:vsne:um:i}), hence $A$ is uniformly monotone.
							\cref{prop:um:sne:ii}$\RA$\cref{prop:um:sne:iii}:
							Apply \cref{lem:une:sne} with $T$ replaced by $-R_A$.
						\end{proof}

						\begin{corollary} 
							\label{cor-real-line-eq}
							Suppose that $X=\RR$.
							The following are equivalent.
							\begin{enumerate}
								\item $A$ is uniformly monotone.
								\item $-R_A$ is strongly nonexpansive.
								\item $-R_A$ is \vsne.
							\end{enumerate}
						\end{corollary}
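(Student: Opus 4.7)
The plan is to combine \cref{prop:um:sne} with \cref{prop-sne-r}, with essentially no additional work. Recall from \cref{prop:um:sne} that, in any real Hilbert space, $A$ is uniformly monotone if and only if $-R_A$ is \ssnonex, and moreover \ssnonex ness implies strong nonexpansiveness (via \cref{lem:une:sne}). Thus the implications $(\text{i})\siff(\text{iii})\RA(\text{ii})$ hold already; what remains to complete the cycle in the real-line case is the reverse implication $(\text{ii})\RA(\text{iii})$.

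For this last implication I would simply invoke \cref{prop-sne-r} applied with $T:=-R_A\colon \RR\to\RR$. Since $R_A$ is nonexpansive by \cref{fact:corres}, so is $-R_A$, and hypothesis (ii) says $-R_A$ is strongly nonexpansive; \cref{prop-sne-r} then upgrades $-R_A$ to \ssnonex, which is precisely (iii). This closes the loop $(\text{i})\siff(\text{ii})\siff(\text{iii})$.

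No real obstacle is expected because \cref{prop-sne-r} was already the technical result doing the heavy lifting for the real-line case; the corollary is a packaging statement. The only thing to be mindful of is the sign: \cref{prop-sne-r} concerns a map into $\RR$, and $-R_A$ (rather than $R_A$) is the appropriate map because uniform monotonicity of $A$ corresponds to \ssnonex ness of $-R_A$, as recorded in \cref{prop:um:sne}.
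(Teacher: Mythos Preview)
Your proposal is correct and matches the paper's own proof, which simply reads ``Combine \cref{prop:um:sne} and \cref{prop-sne-r}.'' Your write-up just spells out the implication chain in more detail.
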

						
						\begin{proof}
							Combine \cref{prop:um:sne} and 	\cref{prop-sne-r}.
						\end{proof}	
						
						\begin{example}
							\label{ex:umoce} 
							Suppose that $X=\mathbb{R}^2$.
							Set $a_0=0$ and set $(\forall m \ge 1)$ 
							\begin{subequations}
								\begin{align}
									a_m&=2^{m+1}-2\\
									w_m&=\tfrac{1}{\sqrt{4^m+1}}(2^m,1)\\
									K_m&=({4^m-4^{-m}})^{1/2}\\
									\beta_m&=\tfrac{1}{2^m}K_m\\
									D_m&=\menge{(x,y)}{x\le a_m}.
								\end{align}
							\end{subequations}
							Let 
							\begin{equation}
								T(x,y)\colon \RR^2\to \RR^2\colon (x,y)\mapsto 
								\begin{cases}
									(0,0),&x\le 0;
									\\
									\sum_{j=0}^{m-1}K_jw_j+(\tfrac{x-a_{m-1}}{2^m})K_mw_m,& x\in [a_{m-1},a_m], m\ge 1. 
								\end{cases}	
							\end{equation}	
							Set $\widetilde{A}=\big(\tfrac{\Id-T}{2}\big)^{-1}-\Id$.

							Then the following hold:
							\begin{enumerate}
								\item
								\label{ex:umoce:0}
								$(\forall m \in  \mathbb{N})$ 
								$T_{|D_m}$ is a contraction with a constant $\beta_m$.
								\item
								\label{ex:umoce:i} 
								$T$ is strongly nonexpansive, hence nonexpansive.
								\item
								\label{ex:umoce:ii}
								There exist sequences $(x_n)_\nnn, (y_n)_\nnn $ in $ \RR^2$ satisfying
								\begin{equation} 
									\label{e:umoce}
									\|x_n - y_n\|^2 - \|T x_n - Ty_n\|^2 \to 0, \ \ 
									\mbox{nevertheless}\ \ (x_n - y_n) - (Tx_n - Ty_n) \not\to 0.
								\end{equation}
								Consequently, $T$ is not \ssnonex.
								\item
								\label{ex:umoce:iii:0}
								$T=-R_{\widetilde{A}}$.
								\item
								\label{ex:umoce:iii}
								$\widetilde{A}$ is maximally monotone.
								\item
								\label{ex:umoce:iv} 
								$\widetilde{A}$ is \emph{not} uniformly monotone.
							\end{enumerate}
						\end{example}
						\begin{proof}
							Observe that $(w_m)_{m\in \NN}$ is a sequence of unit vectors whose positive slopes are strictly decreasing 
							to $0$, that $(\beta_m)_{m\in \NN}$ is a sequence of strictly increasing real numbers  in $\left[0,1\right[$
							and that $(K_m)_{m\in \NN}$ is a sequence of strictly increasing real numbers with $K_m\to +\infty$. 
							
							\cref{ex:umoce:0}:
							Let $m\in \NN$.
							Observe that if $m=0 $ the conclusion is obvious.
							Therefore, we assume $m\ge 1$.
							Let $\{u,v\}\subseteq D_m$, with $u=(u_1,u_2)$ and $v=(v_1,v_2)$ and let $\{r,s\}\subseteq \{1, \ldots,m\}$
							be such that $u_1\in [a_{r-1}, a_r]$ and $v_1\in [a_{s-1}, a_s]$.
							Without loss of generality we may and do assume that $u_1\le v_1$ and hence $r\le s\le m$.
							%
							If $r=s$ then the definition of $T$ implies that  
							\begin{equation} 
								\label{e:umoce:i}
								Tu - Tv=\beta_r(u_1-v_1)w_r.
							\end{equation}
							Consequently, we have 
							\begin{equation} 
								\label{e:cont:oneint}
								\norm{Tu-Tv}
								=\beta_r\abs{u_1-v_1}\le \beta_r \norm{u-v}. 	
							\end{equation}
							Observe that $(\forall (u_1,u_2)\in \RR^2)$ we have 
							$T(u_1,u_2)=T(u_1,0)$.
							Moreover, let $ k\ge 1$. Applying \cref{e:umoce:i} with 
							$(u,v)$ replaced by $((a_{k-1},0),(a_{k},0))$ yields:
							\begin{equation}
								\label{e:am:am1}
								{T(a_k,0)-T(a_{k-1},0)}
								=\beta_k(a_{k}-a_{k-1})w_k=K_kw_k.
							\end{equation}	
							Using the triangle inequality, \cref{e:umoce:i}
							and \cref{e:am:am1} 
							we have
							\begin{subequations}
								\begin{align}
									\norm{Tu-Tv}
									&=\norm{T(v_1,v_2)-T(u_1,u_2)}=\norm{T(v_1,0)-T(u_1,0)}
									\\
									&=	\norm{T(v_1,0) - T(a_{s-1},0) +T(a_{s-1},0)-T(a_{s-2},0)+\ldots
										\nonumber
										\\
										&
										\quad	-T(a_{r},0)+T(a_{r},0)    -T(u_1,0)}
									\\
									&\le 	\norm{T(v_1,0) - T(a_{s-1},0)}+\norm{T(a_{s-1},0)-T(a_{s-2},0)}+\ldots
									\nonumber
									\\
									&\quad +\norm{T(a_{r},0)    -T(u_1,0)}
									\\
									&=\beta_s(v_1-a_{s-1})+\beta_{s-1}(a_{s-1}-a_{s-2})+\ldots+\beta_{r+1}(a_{r+1}-a_{r})+\beta_r(a_r-u_1)
									\\
									&\le\beta_m(v_1-a_{s-1})+\beta_{m}(a_{s-1}-a_{s-2})+\ldots+\beta_{m}(a_{r+1}-a_{r})+\beta_m(a_r-u_1)
									\\
									&=\beta_m(v_1-u_1)
									\le \beta_m\norm{u-v}.
								\end{align}	
							\end{subequations}

							\cref{ex:umoce:i}:
							Suppose $(x_n)_\nnn$, and $ (y_n)_\nnn$ are sequences in $ \RR^2$ such that
							\begin{equation} \label{e:umoce:iii}
								\|x_n - y_n\| - \|Tx_n - Ty_n\| \to 0  \ \ \mbox{and} \ \ (x_n - y_n)_\nnn \ \mbox{is bounded}.
							\end{equation}
							Let us denote $x_n = (x_{n,1},x_{n,2})$, $y_n = (y_{n,1},y_{n,2})$. 
							We proceed by proving the following claims:
							
							\textsc{Claim~1}:
							The sequences $(x_{n,1})_\nnn$, and $ (y_{n,1})_\nnn$ are unbounded.
							\\
							Indeed, suppose for eventual contradiction that one of the sequences $(x_{n,1})_\nnn$, and $ (y_{n,1})_\nnn$ is bounded.
							The boundedness of $(x_n - y_n)_\nnn$ implies 
							that both  sequences $(x_{n,1})_\nnn$ and $ (y_{n,1})_\nnn$ must be bounded.	
							Indeed, without loss of generality we may and do assume that $(x_n-y_n)\not\to 0$.
							Let $m\ge 1$ be such that $(\forall \nnn)$ $\max \{x_{n,1},y_{n,1}\}\le a_m$.
							Observe that \cref{ex:umoce:0} implies that 
							$\norm{Tx_n-Ty_n}\le \beta_m\norm{x_n-y_n}$. Hence,
							\begin{equation}
								0<(1-\beta_m)\norm{x_n-y_n}\le \norm{x_n-y_n}-\norm{Tx_n-Ty_n}\to 0.
							\end{equation}	
							That is, $\norm{x_n-y_n}\to 0$, which is absurd.
							Therefore,   the sequences $(x_{n,1})_\nnn$ and $ (y_{n,1})_\nnn$ are unbounded
							as claimed.
							After passing to a subsequence and relabelling if necessary we conclude that
							\begin{equation}
								x_{n,1}\to \infty \quad \text{and}	\quad y_{n,1}\to \infty.
							\end{equation}
							Because $(x_n - y_n)_\nnn$ is bounded and the slopes of the
							vectors $(w_n)_\nnn$ go to $0$ as $n \to \infty$.  
							\textsc{Claim~2:}
							\begin{equation}
								\text{$(\forall \nnn)$ $Tx_n - Ty_n = (c_n(x_{n,1} - y_{n,1}), d_n)$ where $c_n \to 1^{-}$  and $d_n \to 0$. }
							\end{equation}
							To verify \textsc{Claim~2},  let $M > 0$ be such that
							$(\forall \nnn)$ $\|x_n - y_n\| \le M$. 
							it follows from 
							\cref{e:umoce:i} that 
							\begin{equation} 
								\label{e:umoce:ii:i}
								Tu - Tv=\tfrac{K_r}{{\sqrt{4^r+1}}}({u_1-v_1})\big(1, \tfrac{1}{2^r}\big).
							\end{equation}
							In view of \cref{e:umoce:ii:i}
							and \cref{e:cont:oneint}
							we learn that
							for $n$  and $j$ such that $\min\{x_{n,1},y_{n,1}\} \ge a_{j-1}$, 
							it follows  that
							\begin{equation}
								\label{e:umoce:iv}
								|d_n| \le 2^{-j}M \qquad \mbox{while} \qquad c_n \ge \frac{K_j }{{\sqrt{4^j+1}}}= \left(\frac{4^j - 4^{-j}}{4^j + 1}\right)^{1/2}.
							\end{equation}
							Note  it is possible that the interval with endpoints $x_{n,1}$ and $y_{n,1}$ may intersect more than one interval $[a_{r-1},a_r]$.  
							However, $2^{-n}$ decreases as $n$ increases and 
							$K_n/{\sqrt{4^n+1}}$ increases as $n$ increases in  \cref{e:umoce:ii:i}
							and so \cref{e:umoce:iv} remains valid in this case as well 
							since $a_{j - 1} \le \min\{x_{n,1},y_{n,1}\}$.
							This verifies \textsc{Claim~2}.
							
							\textsc{Claim~3:}
							\begin{equation}
								|x_{n,2} - y_{n,2}| \to 0. 
							\end{equation}
							
							Indeed, set $(\forall \nnn) $ $\overline{x}_n=(x_{n,1},0)$ and $\overline{y}_n=(y_{n,1},0)$
							and note that $\norm{\overline{x}_n-\overline{y}_n}\le \norm{x_n-y_n}$
							and that $(T\overline{x}_n,T\overline{y}_n)=(Tx_n,Ty_n)$.
							Therefore, the nonexpansiveness of $T$ and \cref {e:umoce:iii} imply
							\begin{equation}
								0\le 	\norm{\overline{x}_n-\overline{y}_n}-\norm{T\overline{x}_n-T\overline{y}_n}
								=	\norm{\overline{x}_n-\overline{y}_n}- \|Tx_n - Ty_n\|
								\le 
								\|x_n - y_n\| - \|Tx_n - Ty_n\| \to 0.
							\end{equation}	
							That is 
							\begin{equation}	
								\label{e:aux:seq}
								\norm{\overline{x}_n-\overline{y}_n}- \|Tx_n - Ty_n\|\to 0.
							\end{equation}	
							Subtracting \cref{e:umoce:iii} and \cref{e:aux:seq} yields
							\begin{equation}	
								\norm{ {x}_n- {y}_n} - 	\norm{\overline{x}_n-\overline{y}_n} \to 0.
							\end{equation}	
							Consequently we have 
							\begin{subequations}	
								\begin{align}
									|x_{n,2} - y_{n,2}| ^2&=	\norm{ {x}_n- {y}_n}^2 - 	\norm{\overline{x}_n-\overline{y}_n}^2
									\\
									&=(	\norm{ {x}_n- {y}_n}- 	\norm{\overline{x}_n-\overline{y}_n})(	\norm{ {x}_n- {y}_n}+	\norm{\overline{x}_n-\overline{y}_n})  \to 0.
								\end{align}
							\end{subequations}	
							Combining \textsc{Claim~2}
							and \textsc{Claim~3} 
							we learn that 
							\begin{equation}
								(x_n-y_n)-(Tx_n-Ty_n)\to (0,0),
							\end{equation}	
							hence
							$T$ is strongly nonexpansive.
							
							\cref{ex:umoce:ii}:
							Set $(\forall \nnn)$  $x_n = (a_{n+1},0)$ and $y_n = (a_{n}, 0)$. Observe that
							$x_n - y_n = (2^n,0)$ while $Tx_n - Ty_n = K_n w_n = \tfrac{K_n}{\sqrt{4^n+1}}( 2^n,1)$. 
							Therefore, $\|x_n - y_n\|^2 = 4^n$
							and $\|Tx_n - Ty_n\|^2 = \| K_n w_n\|^2 = K_n^2=4^n - 4^{-n}$. Hence
							\begin{equation}
								\|x_n - y_n\|^2 - \|T x_n - Ty_n\|^2 \to 0
							\end{equation}	
							However, 
							\begin{equation}
								(x_n - y_n) - (Tx_n - Ty_n)
								=\Big(\big(1-\tfrac{K_n}{\sqrt{4^n+1}}\big)2^n,-\tfrac{K_n}{\sqrt{4^n+1}}\Big) 
								\to (0,-1)\neq (0,0).
							\end{equation}
							and so (\ref{e:umoce}) holds. 
							
							\cref{ex:umoce:iii:0}:
							This is clear.
							
							\cref{ex:umoce:iii}:
							Combine
							\cref{ex:umoce:i} and \cite[Corollary~23.9~and~Proposition~4.4]{BC2017}.
							
							\cref{ex:umoce:iv}:
							Combine
							\cref{ex:umoce:ii},
							\cref{ex:umoce:iii:0} and \cref{prop:um:sne}.
						\end{proof}
						\begin{figure}[H]
							\begin{center}
								\includegraphics[scale=0.6]{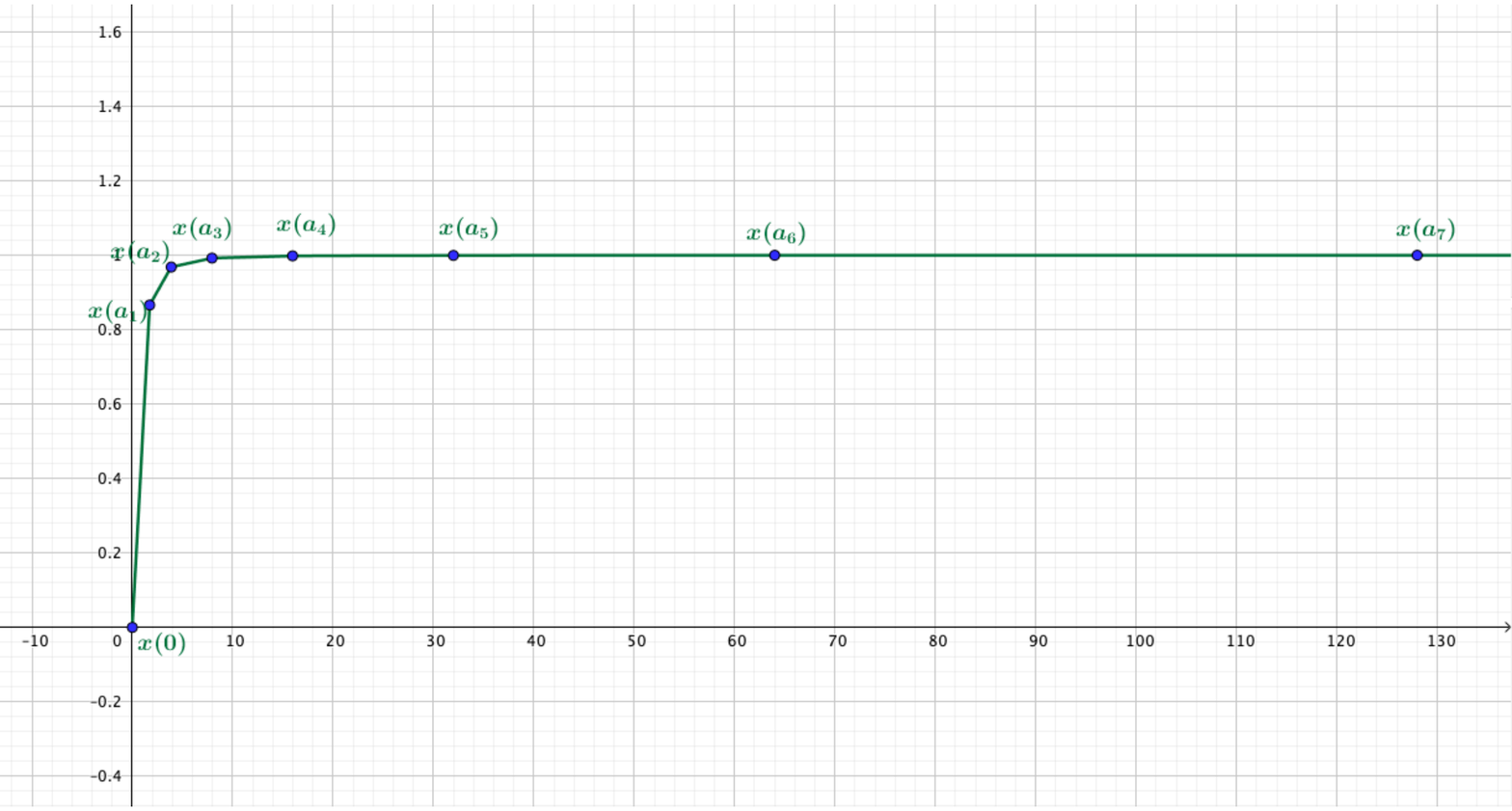}
							\end{center}	
							\caption{A \texttt{GeoGebra} snapshopt illustrating the operator 
								$T$ in \cref{ex:umoce}. Here $x(0)\coloneqq T(x,y), x\le 0, y\in \RR$
								and  $x(a_i)\coloneqq T(a_i,y)$ where $y\in \RR$, $i\in\{1,2,\ldots, 7\} $.}
						\end{figure}

						\section{Properties of Uniformly Monotone Operators}
						\label{sec:4}
						The main goals of this section are to establish that uniformly 
						monotone operators are surjective, have unique zeros and 
						have uniformly continuous inverse operators.
						
						The following result is motivated by Z\u{a}linescu's important 
						result \cite[Proposition 3.5.1]{Za02} concerning the growth rate of the modulus of a uniformly convex function. 
						
						\begin{proposition} 
							\label{prop-uni-mon-mod}
							Let $Y$ be a Banach space
							and suppose $B\colon Y \rras {Y^*}$ is uniformly monotone\footnote{Let $Y$ be a Banach space. An operator $B \colon
								Y \rras {Y^*}$ is {\em uniformly monotone} with a modulus 
								$\phi\colon  \RR_+ \to [0,+\infty]$ if $\phi$ is increasing, vanishes only at $0$ and 
								$(\forall (x,x^*)\in \gra B)$ $(\forall (y,y^*)\in \gra B)$
								$
								\langle x - y, x^*-y^*\rangle\ge \phi(\|x- y\|)  .
								$} 
							with convex domain. 
							Then $B$ has a supercoercive modulus $\phi$ that satisfies 
							the following property.
							\begin{equation} 
								\label{eq-ums}
								\mbox{For each} \ \epsilon > 0 \ \  (\exists\beta_\epsilon > 0) \ \ 
								\mbox{so that}\ \ \phi(t) \ge \beta_\epsilon t^2 \ \ 
								\mbox{whenever} \ t \ge \epsilon, \ \mbox{in particular}\ 
								\liminf_{t \to \infty} \frac{\phi(t)}{t^2} > 0.
							\end{equation}
						\end{proposition}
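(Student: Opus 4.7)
The plan is to \emph{enlarge} the given modulus rather than work with it directly: I would define
$\psi(t) := \sup_{N \ge 1} N^2 \phi(t/N)$,
show that $\psi$ is still a valid modulus for $B$, and then extract the quadratic growth bound \cref{eq-ums} transparently from this definition.

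The verification that $\psi$ serves as a modulus rests on a telescoping argument along line segments, which is where convexity of $\dom B$ enters. Given $(x,x^*),(y,y^*)\in\gra B$ with $\|x-y\|=t>0$ and any integer $N\ge 1$, set $z_n := x + (n/N)(y-x)$ for $n=0,1,\ldots,N$. Convexity of $\dom B$ guarantees $z_n\in\dom B$, so one can pick $z_n^*\in Bz_n$ with the endpoint constraints $z_0^*=x^*$ and $z_N^*=y^*$. Applying uniform monotonicity to consecutive pairs gives $\langle z_{n+1}-z_n, z_{n+1}^*-z_n^*\rangle\ge \phi(t/N)$. Since $z_{n+1}-z_n=(y-x)/N$ is independent of $n$, summing over $n=0,\ldots,N-1$ telescopes the dual slot and yields $\tfrac{1}{N}\langle y-x, y^*-x^*\rangle \ge N\phi(t/N)$, which rearranges to $\langle y-x, y^*-x^*\rangle \ge N^2\phi(t/N)$. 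Taking the supremum over $N$ delivers $\langle y-x, y^*-x^*\rangle\ge \psi(t)$; combined with $\psi(0)=0$, $\psi(t)\ge \phi(t)>0$ for $t>0$, and monotonicity inherited from $\phi$, this makes $\psi$ a bona fide modulus for $B$.

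For the growth estimate, fix $\epsilon>0$, take $t\ge \epsilon$, and choose $N := \lfloor t/\epsilon\rfloor$. Then $t/N\ge \epsilon$, so $\phi(t/N)\ge \phi(\epsilon)$ by monotonicity. For $t\ge 2\epsilon$ the bound $N\ge t/(2\epsilon)$ gives $\psi(t)\ge N^2\phi(t/N)\ge \tfrac{\phi(\epsilon)}{4\epsilon^2}t^2$, while for $\epsilon\le t<2\epsilon$ the estimate $\psi(t)\ge \phi(\epsilon)$ together with $t^2<4\epsilon^2$ gives the same conclusion. Setting $\beta_\epsilon := \phi(\epsilon)/(4\epsilon^2)>0$ establishes \cref{eq-ums}, and supercoercivity follows from $\psi(t)/t\ge \beta_\epsilon t\to +\infty$.

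The main subtlety is the simultaneous selection of $z_n^*\in Bz_n$ along the interior of the segment that matches the prescribed endpoint values $x^*$ and $y^*$; this hinges on convexity of $\dom B$ to place the $z_n$ in the domain. No Hilbert structure or maximal monotonicity is needed, only bilinearity of the duality pairing in the telescoping step, so the argument goes through in general Banach space.
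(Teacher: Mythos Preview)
Your argument is correct and rests on the same core mechanism as the paper's proof: subdivide the segment from $x$ to $y$ using convexity of $\dom B$, apply uniform monotonicity on each subsegment, and telescope the dual pairing. The paper carries this out by repeated bisection---an induction on $k$ showing $\langle z-x, z^*-x^*\rangle \ge 2^{2k}\alpha$ whenever $\|z-x\|\ge 2^k$, using the midpoint $(x+z)/2$ at each step---whereas you do the $N$-fold equipartition in one pass. Your packaging via the explicit enlarged modulus $\psi(t)=\sup_{N\ge 1} N^2\phi(t/N)$ is a clean addition: it makes the supercoercive modulus concrete rather than leaving it implicit in the growth inequality, and the estimate $\beta_\epsilon = \phi(\epsilon)/(4\epsilon^2)$ drops out directly. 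The paper's version, by contrast, first handles the threshold $\|x-y\|\ge 1$ to get $\beta = \alpha/4$ and then patches in small $\epsilon$ separately. Both routes are equally valid; yours is marginally more streamlined.
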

						
						\begin{proof}
							Because $B$ is uniformly monotone we fix 
							$\alpha > 0$ so that
							$(\forall (x,x^*)\in \gra B)$ $(\forall (y,y^*)\in \gra B)$
							\begin{equation} 
								\label{eq-uma}
								\langle y - x, y^* - x^* \rangle \ge \alpha \ \ \mbox{whenever}  \  \|y - x\| \ge 1.
							\end{equation}
							We claim that:
							\begin{equation} 
								\label{eq-umc}
								\langle y - x, y^* - x^* \rangle \ge \tfrac{ \alpha}{4}\|x-y\|^2 \ \mbox{whenever}  \ 
								\|y - x\| \ge 1,
								(x,x^*)\in \gra B, \ (y,y^*)\in \gra B.
							\end{equation}
							To verify \cref{eq-umc} 
							it is sufficient to show  that $(\forall k\in \NN) $ we have  
							\begin{equation} 
								\label{eq-umb}
								\langle y - x, y^* - x^* \rangle \ge 2^{2k} \alpha \ \mbox{whenever}  \ 
								\|y - x\| \ge 2^k, \ (x,x^*)\in \gra B, \ (y,y^*)\in \gra B. 
							\end{equation}
							We proceed by induction on $k$. Indeed, \cref{eq-uma} verifies 
							the base case at $k=0$.
							Now suppose that \cref{eq-umb} is true for some
							$k = n$. Suppose that $\ (x,x^*)\in \gra B, \ (z,z^*)\in \gra B$ 
							satisfy $\|z - x\| \ge 2^{n+1}$. 
							Let $y = (x+z)/2$ and observe that 
							$y- x= z- y = (z - x)/2 $, hence $\norm{y- x}= \norm{z- y}\ge 2^n$. 
							Because $\dom B$ is convex 
							we have $y \in \domai B$ and so we can pick  $y^* \in By$.   
							It follows from the
							inductive hypothesis that
							\begin{subequations}
								\begin{align}
									2^{2n} \alpha 
									&\le \langle y - x, y^* - x^* \rangle
									= \left\langle \tfrac{z-x}{2} , y^* - x^* \right\rangle
									\label{se:ine:1}
									\\
									2^{2n} \alpha &\le \langle z -y, z^* - y^*\rangle 
									=  \left\langle \tfrac{z-x}{2} , z^* - y^* \right\rangle
									\label{se:ine:2}
								\end{align}
							\end{subequations}
							Adding \cref{se:ine:1} 
							and \cref{se:ine:2} yields $\ds \left\langle \tfrac{z-x}{2}, z^* - x^*\right\rangle \ge 2 (2^{2n})\alpha$ 
							and consequently
							\begin{equation} 
								\langle z - x, z^* - x^*\rangle \ge 4( 2^{2n})\alpha =  2^{2(n+1)} \alpha,
							\end{equation}
							which proves (\ref{eq-umc}).
							We now establish (\ref{eq-ums}). 
							Let $\epsilon >0$. In view of \cref{eq-umc} 
							if $\epsilon \ge 1$ we set $\beta_\epsilon=\tfrac{\alpha}{4}$.
							Now suppose that  $0 < \epsilon < 1$.
							The uniform monotonicity of $B$ implies that
							$(\exists \alpha_\epsilon > 0)$ such  that
							\begin{equation} 
								\label{eq-umd}
								\langle y - x, y^* - x^* \rangle \ge \alpha_\epsilon \ \ \mbox{whenever}  \  \|y - x\| \ge \epsilon,
								(x,x^*)\in \gra B, \ (y,y^*)\in \gra B.
							\end{equation}
							In view of \cref{eq-umc}
							the conclusion follows by setting 
							$\beta_\epsilon = \min\{\alpha/4,\alpha_\epsilon\}$ for $0 < \epsilon  < 1$, and $\beta_\epsilon = \alpha/4$ for $\epsilon \ge 1$. 
						\end{proof}
						
					Analogous to the concept of Lipschitz  for large distances
					 in \cite{BL2000}, we introduce the following definition.
						\begin{definition}
							\label{defn:in-the-large-cont}
							We say that $T\colon X \to X$ is 
							a {\em contraction for large distances} if for each $\epsilon > 0$, 
							$(\exists K_\epsilon < 1)$ so that 
							$\|Tx - Ty\| \le K_\epsilon\|x - y\|$  whenever $(x,y) \in X\times X$ 
							satisfy $\|x - y\| \ge \epsilon$.
						\end{definition}
					
					\begin{remark}
					Contractions for large distances are not a new concept. 
					In fact, they coincide with an unnamed class of nonexpansive maps 
					introduced by Rakotch in  
					\cite[Definition~2]{Rak62}, and have been referred to as 
					\emph{contractive}  by others, including, for example, Reich \& Zaslavski in \cite{ReiZas00}.
					 However, in \cite{Rak62} the term contractive referred to the more general class of strictly 
					 nonexpansive mappings. 
					\end{remark}	
						
						\begin{lemma}
							\label{lem-inv-res} 
							Suppose $A$ is uniformly monotone, then the following hold:
							\begin{enumerate}
								\item
								\label{lem-inv-res:a} 
								$J_{A^{-1}}$ 
								is uniformly monotone with a supercoercive modulus.
								\item
								\label{lem-inv-res:b} 
								For each $\epsilon > 0$ $(\exists\beta_\epsilon \in \left ]0,1\right])$
								such that
								$(\forall (x,y)\in X\times X)$ satisfying $\|x  - y \| \ge \epsilon$ we have 
								\begin{equation} 
									\label{eq-ir-s}
									\scal{J_Ax - J_Ay}{J_{A^{-1}}x - J_{A^{-1}}y}
									+ \|J_{A^{-1}}x - J_{A^{-1}}y\|^2 \ge \beta_\epsilon \norm{x-y}^2.
								\end{equation}
								\item
								\label{lem-inv-res:ab} 
								$J_{A^{-1}}$ is surjective.		
								\item
								\label{lem-inv-res:c} 
								$J_{A}$ is a contraction for large distances.	
							\end{enumerate}
						\end{lemma}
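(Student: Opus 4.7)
The plan is to prove the four parts sequentially, using $J_{A^{-1}} = \Id - J_A$ throughout, with the strengthened quadratic bound on the modulus supplied by \cref{prop-uni-mon-mod} as the workhorse.

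For \cref{lem-inv-res:a}, I would apply Minty's parametrization (\cref{thm:minty}): the pairs $(x,u_1),(y,u_2)\in\gra J_{A^{-1}}$ correspond to $(J_A x,J_{A^{-1}}x),(J_A y,J_{A^{-1}}y)\in\gra A$. Uniform monotonicity of $A$ with modulus $\phi$ (chosen per \cref{prop-uni-mon-mod}) therefore yields
\begin{equation*}
\scal{J_A x - J_A y}{J_{A^{-1}}x - J_{A^{-1}}y}\;\ge\;\phi(\norm{J_A x - J_A y}),
\end{equation*}
and adding $\norm{J_{A^{-1}}x-J_{A^{-1}}y}^2$ to both sides converts the left side to $\scal{x-y}{J_{A^{-1}}x-J_{A^{-1}}y}$. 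The triangle inequality applied to the decomposition $x-y=(J_Ax-J_Ay)+(J_{A^{-1}}x-J_{A^{-1}}y)$ forces $\max\{\norm{J_A x - J_A y},\norm{J_{A^{-1}}x-J_{A^{-1}}y}\}\ge\norm{x-y}/2$, so $\psi(t):=\min\{\phi(t/2),t^2/4\}$ is a modulus of uniform monotonicity for $J_{A^{-1}}$; the quadratic estimate $\phi(t)\ge\beta_\epsilon t^2$ from \cref{prop-uni-mon-mod} then forces $\psi$ to be supercoercive.

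For \cref{lem-inv-res:b}, I would quantify the same dichotomy. Fix $\epsilon > 0$ and suppose $\norm{x-y}\ge\epsilon$. If $\norm{J_A x - J_A y}\ge\norm{x-y}/2$, applying \cref{prop-uni-mon-mod} with threshold $\epsilon/2$ gives $\phi(\norm{J_A x - J_A y})\ge\tfrac{\beta_{\epsilon/2}}{4}\norm{x-y}^2$; in the complementary case $\norm{J_{A^{-1}}x-J_{A^{-1}}y}^2\ge\norm{x-y}^2/4$ outright. Setting $\beta_\epsilon := \tfrac{1}{4}\min\{\beta_{\epsilon/2},1\}\in\left]0,1\right]$ and observing that, via $J_A+J_{A^{-1}}=\Id$, the left side of \cref{eq-ir-s} collapses to $\scal{x-y}{J_{A^{-1}}x-J_{A^{-1}}y}$ completes this part.

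For \cref{lem-inv-res:ab}, I would use the identification $\ran J_{A^{-1}} = \dom(\Id+A^{-1}) = \ran A$ and show $\ran A = X$. Fixing any $(y,y^*)\in\gra A$ and using the quadratic modulus from \cref{prop-uni-mon-mod}, every $(x,x^*)\in\gra A$ satisfies $\norm{x^*}\ge\phi(\norm{x-y})/\norm{x-y}-\norm{y^*}\to\infty$ as $\norm{x-y}\to\infty$, so $A$ is coercive. The classical surjectivity result for coercive maximally monotone operators on a Hilbert space (Browder--Minty) then yields $\ran A = X$. This appeal is the main obstacle of the proof, though the underlying theorem is standard.

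Finally, for \cref{lem-inv-res:c}, I would combine (ii) with $J_A=\Id-J_{A^{-1}}$: for $\norm{x-y}\ge\epsilon$,
\begin{equation*}
\scal{x-y}{J_A x - J_A y}\;=\;\norm{x-y}^2 - \scal{x-y}{J_{A^{-1}}x-J_{A^{-1}}y}\;\le\;(1-\beta_\epsilon)\norm{x-y}^2.
\end{equation*}
Firm nonexpansiveness of $J_A$ furnishes $\norm{J_Ax-J_Ay}^2\le\scal{x-y}{J_Ax-J_Ay}$, so $\norm{J_Ax-J_Ay}\le\sqrt{1-\beta_\epsilon}\,\norm{x-y}$ and $K_\epsilon:=\sqrt{1-\beta_\epsilon}<1$ is a valid contraction constant at distance $\epsilon$.
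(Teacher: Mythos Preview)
Your approach mirrors the paper's almost exactly: the Minty decomposition $x = J_Ax + J_{A^{-1}}x$, the triangle-inequality dichotomy giving $\max\{\|J_Ax-J_Ay\|,\|J_{A^{-1}}x-J_{A^{-1}}y\|\}\ge\|x-y\|/2$, and the firm-nonexpansiveness argument for \cref{lem-inv-res:c} are all identical. There is, however, one technical gap that recurs in parts \cref{lem-inv-res:a}, \cref{lem-inv-res:b}, and \cref{lem-inv-res:ab}.

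You invoke \cref{prop-uni-mon-mod} on $A$ itself to obtain the quadratic lower bound $\phi(t)\ge\beta_\epsilon t^2$ for the modulus of $A$. But \cref{prop-uni-mon-mod} requires the operator to have \emph{convex domain}, and nothing in the hypotheses guarantees that $\dom A$ is convex (maximal monotonicity only gives that $\overline{\dom A}$ is convex). The midpoint step in the proof of \cref{prop-uni-mon-mod} genuinely fails without this. The paper circumvents the issue by first establishing---exactly as you do, using only the raw modulus $\phi$---that $J_{A^{-1}}$ is uniformly monotone with \emph{some} modulus, and then applying \cref{prop-uni-mon-mod} to $J_{A^{-1}}$, whose domain is all of $X$ and hence trivially convex. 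This yields the supercoercive modulus for $J_{A^{-1}}$ and the quadratic estimate in \cref{eq-ir-s} in one stroke, and then \cref{lem-inv-res:ab} follows immediately from \cite[Proposition~22.11(ii)]{BC2017} applied to $J_{A^{-1}}$ (essentially the same Browder--Minty surjectivity you appeal to, but applied to an operator that is already known to be coercive). Your argument is easily repaired by this redirection; once you drop the attempt to upgrade $\phi$ and instead upgrade the modulus of $J_{A^{-1}}$, everything you wrote goes through.
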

						
						\begin{proof}
			Let $\epsilon > 0$ and let $\phi$ be a modulus function for
							$A$.  Set 
							\begin{equation}
								\label{eq-ir-alp}
								\alpha=\alpha(\epsilon) = \min\{\phi(\epsilon/2), \epsilon^2/4\}.
							\end{equation}
							Then $\alpha > 0$. 
							\cref{lem-inv-res:a}\&\cref{lem-inv-res:b}:
	Let $(x,y)\in X\times X$ 
							be such that 
							$\|x -y\| \ge \epsilon$. 
							The triangle inequality and \cref{thm:minty} imply that  
							\begin{equation} 
								\label{eq-ir-c}
								\max \{\|J_{A}x - J_{A}y\|,\|J_{A^{-1}}x - J_{A^{-1}}y\|\} \ge \epsilon/2.
							\end{equation} 
							Therefore we obtain
							\begin{subequations}
								\begin{align} 
									\scal{x  - y}{ J_{A^{-1}}x - J_{A^{-1}}y} 
									&= \scal{J_{A}x - J_{A}y+J_{A^{-1}}x - J_{A^{-1}}y}{ J_{A^{-1}}x - J_{A^{-1}}y} 
									\\
									&=\scal{J_{A}x - J_{A}y}{J_{A^{-1}}x - J_{A^{-1}}y}
									+ \|J_{A^{-1}}x - J_{A^{-1}}y\|^2  
									\\
									&\ge \phi(\|J_{A}x - J_{A}y\|) + \|J_{A^{-1}}x - J_{A^{-1}}y\|^2 \ge \alpha.    
								\end{align}
								\label{eq-ir-b}
							\end{subequations}
							It follows from (\ref{eq-ir-b})  that $J_{A^{-1}}$ is uniformly monotone.
							Combining this  with \cref{prop-uni-mon-mod} applied with $(Y, A)$
							replaced by $(X, J_{A^{-1}})$ and the fact that 
							$\dom J_{A^{-1}}=X$ we learn that 
							$J_{A^{-1}}$ is uniformly monotone with a supercoercive 
							modulus that satisfies \cref{eq-ums}. The claim that 
							$\beta_\epsilon \le1$ is a direct consequence 
							of the nonexpansiveness of $J_{A^{-1}}$
							and the monotonicity of $A$
							in view of \cref{eq-ir-s}. 
							
							\cref{lem-inv-res:ab}: 
							Combine \cref{lem-inv-res:a}  
							and \cite[Proposition~22.11(ii)]{BC2017}.
							\cref{lem-inv-res:c}:
							Indeed, using \cref{lem-inv-res:b} and the firm 
							nonexpansiveness  of
							$J_A$ there exists $(\exists\beta_\epsilon \in \left ]0,1\right])$
							such that
							\begin{subequations}
								\begin{align} 
									0	\le \norm{J_{A}x - J_{A}y}^2&\le \scal{x  - y}{ J_{A}x - J_{A}y} 
									= \norm{x-y}^2-\scal{x-y}{ J_{A^{-1}}x - J_{A^{-1}}y} 
									\\
									&\le  \norm{x-y}^2-\phi(\norm{x-y}) \le (1-\beta_\epsilon)    \norm{x-y}^2,
								\end{align}
								\label{eq-ir-e}
							\end{subequations}	
							and the conclusion follows.
						\end{proof}
						
						The previous results lead to nice consequences for uniformly monotone 
						operators which we next state as the main result of this section.

						\begin{theorem} 
							\label{thm-umg} 
							Suppose $A\colon X \rras X$ is uniformly monotone. 
							Then the following hold.
							\begin{enumerate}	
								\item 
								\label{thm-umg:i} 
								$A$ satisfies the growth condition
								\begin{equation}
									\label{e:growth}
									\lim_{\|x-y\| \to \infty} \inf_{\substack{(x,x^*)\in \gra A\\(y,y^*)\in \gra A}} \frac{\|x^* - y^*\|}{\|x - y\|} > 0.
								\end{equation}
								\item 
								\label{thm-umg:ii} 
								$A$ is surjective.
								\item 
								\label{thm-umg:iii} 
								$A$ has a unique zero.
								\item 
								\label{thm-umg:iv} 
								$A^{-1}$ is uniformly continuous.
							\end{enumerate}
						\end{theorem}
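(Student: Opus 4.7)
The plan is to let \cref{prop-uni-mon-mod} (applied to $A$) and \cref{lem-inv-res:ab} carry the weight, and to funnel everything back to the graph of $A$ via Minty's parametrization. I would first invoke \cref{prop-uni-mon-mod} to replace a given modulus of uniform monotonicity for $A$ by one satisfying \cref{eq-ums}, so that for every $\epsilon>0$ some $\beta_\epsilon>0$ obeys $\phi(t)\ge\beta_\epsilon t^2$ whenever $t\ge\epsilon$. This single quadratic lower bound will drive \cref{thm-umg:i} and \cref{thm-umg:iv}, while \cref{thm-umg:ii} and \cref{thm-umg:iii} follow directly from the surjectivity of $J_{A^{-1}}$.

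For \cref{thm-umg:i}, I would take $(x,x^*),(y,y^*)\in\gra A$ with $\|x-y\|\ge 1$ and apply Cauchy--Schwarz to $\scal{x-y}{x^*-y^*}\ge\phi(\|x-y\|)$ to obtain
\begin{equation*}
\frac{\|x^*-y^*\|}{\|x-y\|}\;\ge\;\frac{\phi(\|x-y\|)}{\|x-y\|^2}\;\ge\;\beta_1>0,
\end{equation*}
which is exactly \cref{e:growth}. For \cref{thm-umg:ii}, given $v\in X$ I would use \cref{lem-inv-res:ab} to produce $x\in X$ with $J_{A^{-1}}x=v$; combining this with $J_{A^{-1}}=\Id-J_A$ from \cref{eq:RA:-RA} and the Minty parametrization \cref{eq:Minty} yields $v=x-J_Ax\in A(J_Ax)$, hence $v\in\ran A$. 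Then \cref{thm-umg:iii} is immediate: existence is the case $v=0$ of \cref{thm-umg:ii}, while any two zeros $x,y$ of $A$ satisfy $0=\scal{x-y}{0-0}\ge\phi(\|x-y\|)$, forcing $x=y$ since $\phi$ vanishes only at $0$.

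For \cref{thm-umg:iv}, uniform monotonicity first makes $A$ injective, so $A^{-1}$ is single-valued, with $\dom A^{-1}=X$ by \cref{thm-umg:ii}. Given $\epsilon>0$, I would take the corresponding $\beta_\epsilon$ from the first paragraph. Whenever $x_i:=A^{-1}y_i$ satisfies $\|x_1-x_2\|\ge\epsilon$, the same Cauchy--Schwarz bound as in \cref{thm-umg:i} gives $\|x_1-x_2\|\,\|y_1-y_2\|\ge\phi(\|x_1-x_2\|)\ge\beta_\epsilon\|x_1-x_2\|^2$, whence $\|y_1-y_2\|\ge\beta_\epsilon\epsilon$; the contrapositive is uniform continuity of $A^{-1}$ with modulus $\delta=\beta_\epsilon\epsilon$. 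The main friction point is the convex-domain hypothesis required by \cref{prop-uni-mon-mod}: it is automatic when that proposition is applied to $J_{A^{-1}}$ (as in \cref{lem-inv-res:a}), but for $A$ itself one either has to assume $\dom A$ is convex or transfer the quadratic estimate from $J_{A^{-1}}$ to $A$ through firm nonexpansiveness of $J_A$ and the Minty parametrization.
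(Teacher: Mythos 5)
Your treatment of \cref{thm-umg:ii} and \cref{thm-umg:iii} matches the paper's (surjectivity of $J_{A^{-1}}$ via \cref{lem-inv-res}\cref{lem-inv-res:ab}, plus strict monotonicity for uniqueness). The issue is with \cref{thm-umg:i} and \cref{thm-umg:iv}, where you invoke \cref{prop-uni-mon-mod} applied directly to $A$ to obtain a quadratic lower bound $\phi(t)\ge\beta_\epsilon t^2$ for a modulus of $A$. That proposition requires $\dom B$ to be convex, because its inductive step needs the midpoint $(x+z)/2$ to lie in the domain. For a maximally monotone $A$, only $\overline{\dom A}$ and $\inte\dom A$ are guaranteed convex; $\dom A$ itself need not be, so the hypothesis of \cref{prop-uni-mon-mod} is simply not available here. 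You flag this honestly as a ``friction point'' and sketch a fix (transfer through $J_{A^{-1}}$), but you do not actually carry it out, so as written the argument for \cref{thm-umg:i} and \cref{thm-umg:iv} has a genuine gap.

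The paper circumvents this entirely. For \cref{thm-umg:i} it argues by contradiction and routes the estimate through \cref{lem-inv-res}\cref{lem-inv-res:b}, i.e.\ through $J_{A^{-1}}$, whose domain is all of $X$ (so \cref{prop-uni-mon-mod} legitimately applies there, inside \cref{lem-inv-res}); writing $\|x_n^*-y_n^*\|=a_n\|x_n-y_n\|$ with $a_n\to 0^+$ and feeding this into \cref{eq-ir-s} yields an impossible inequality. For \cref{thm-umg:iv} the paper never needs a quadratic modulus at all: it combines the already-proved growth condition \cref{thm-umg:i} (to get the bound $\|x-y\|<K$ once $\|x^*-y^*\|$ is small) with the raw modulus inequality $\scal{x-y}{x^*-y^*}\ge\phi(\|x-y\|)$ and Cauchy--Schwarz. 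Your direct route is conceptually tidy and would be fine under an extra convex-domain hypothesis, but to prove the theorem as stated you need to either replicate the paper's detour through $J_{A^{-1}}$, or explicitly derive the quadratic growth of the modulus of $A$ from \cref{lem-inv-res}\cref{lem-inv-res:b} via Minty, rather than cite \cref{prop-uni-mon-mod} on $A$.
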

						
						\begin{proof} 
							\cref{thm-umg:i}:
							Suppose for eventual contradiction that 
							there exist sequences  $(x_n,x_n^*)_\nnn$, $(y_n,y_n^*)_\nnn$ in $\gra A$
							such that $\|x_n - y_n\| \to \infty$ but
							\begin{equation}
								\lim_{n \to \infty} \frac{\|x_n^* - y_n^*\|}{\|x_n - y_n\|} = 0.
							\end{equation}
							Indeed, write $\|x_n^* - y_n^*\| = a_n\|x_n - y_n\|$ where $a_n \to 0^+$. 
							Then using \cref{eq-ir-s}, we have $\beta > 0$ such  that 
							\begin{equation}
								(\forall \nnn)\quad a_n\|x_n - y_n\|^2 
								+ a_n^2\|x_n - y_n\|^2 \ge \beta(\|x_n -y_n\|^2 + a_n^2\|x_n - y_n\|^2),
							\end{equation}
							where on the right side, we drop the inner product from \cref{eq-ir-s} 
							because it is nonnegative by monotonicity of A. 
							But the above inequality is impossible since $a_n \to 0^+$
							while $\beta>0$ is fixed.
							Hence, \cref{e:growth} holds.

							\cref{thm-umg:ii}: 
							Combine \cref{lem-inv-res}\cref{lem-inv-res:ab}
							and the fact that 
							$\ran A=\dom A^{-1}=\dom (\Id+A^{-1})=\ran (\Id+A^{-1})^{-1}=\ran J_{A^{-1}}$.
							
							\cref{thm-umg:iii}:  
							It follows from  \cref{thm-umg:ii} that $\zer A\neq \fady$.
							Moreover, $A$ is strictly monotone, hence $\zer A$ is at most a singleton
							by, e.g., \cite[Proposition~23.35]{BC2017}.
							Altogether, $A$ possess a unique zero.

							\cref{thm-umg:iv}:  
							Let $\epsilon > 0$. In view of \cref{thm-umg:i}
							choose $K > 0$ 
							such that $(\forall (x,x^*)\in \gra A)$ $(\forall (y,y^*)\in \gra A)$
							\begin{equation}
								\label{eq-umg-b}
								\|x - y\| \ge K\RA	\|x^* - y^*\| > \epsilon. 
							\end{equation} 
							Let $\alpha = \phi(\epsilon)$ where $\phi$ is a modulus function for $A$. 
							Then $\alpha > 0$ and $(\forall (x,x^*)\in \gra A)$ $(\forall (y,y^*)\in \gra A)$
							\begin{equation} 
								\label{eq-umg-c}
								\|x - y\| \ge \epsilon \RA \scal {x - y}{ x^* - y^*}\ge \alpha .
							\end{equation}
							Now choose $\delta = \min\{\alpha/K, \epsilon\}$. 
							Recalling \cref{thm-umg:ii}, let $\{x^* ,y^*\} \subseteq X=\dom A^{-1}$. 
							Suppose $\|x^* - y^*\| < \delta$ and let $(x^*,x) ,(y^*,y) $
							be points in $\gra  A^{-1}$,
							equivalently; $(x,x^*) ,(y,y^*) $
							are points in $\gra  A$.
							Because $\|x^* - y^*\| < \delta \le \epsilon$, (\ref{eq-umg-b}) implies $\|x - y\| < K$. Because $\|x - y\| < K$, using
							Cauchy--Schwarz  
							we  obtain
							\begin{equation} 
								\label{eq-umg-d}
								\scal{x - y}{x^* - y^*} \le \|x-y\| \|x^* - y^*\| < K\delta  \le \alpha.
							\end{equation}
							Combining \cref{eq-umg-d} and \cref{eq-umg-c} we learn that 
							$\|x - y\| < \epsilon$. Therefore $A^{-1}$ is uniformly continuous as desired.
						\end{proof}
						\begin{remark}
							In passing we point out that 
							\cref{thm-umg}\cref{thm-umg:ii}\&\cref{thm-umg:iii} relax the assumptions 
							of 
							\cite[Proposition 22.11~and~Corollary 23.37]{BC2017}.
							Indeed,  \cref{thm-umg}\cref{thm-umg:ii}\&\cref{thm-umg:iii} 
							assume only the uniform monotonicity of $A$ and do not require supercoercivity of the  modulus.
						\end{remark}
						
						Following \cite[p. 160]{Simons2}, we will say 
						that $S\colon X \rras {X}$ is {\em coercive} provided 
						that $\inf \langle x,Sx\rangle/\|x\| \to \infty$ as $\|x\| \to \infty$, where 
						we use the standard convention that the infimum of the empty set is $+\infty$. 
						Or in other words, given any $K > 0$  $(\exists M > 0)$ so that
						\begin{equation}
							\langle x,x^* \rangle \ge K\|x\| \ \mbox{whenever} \ \ \|x\| \ge M, (x,x^*)\in \gra S.
						\end{equation}				
						Neither the growth condition in \cref{thm-umg}\cref{thm-umg:i}  
						and nor coercivity implies the other for monotone operators 
						as we illustrate in \cref{ex:sc}  
						and \cref{ex-uc-dual-fail}\cref{ex-uc-dual-fail:ii:ii}\&\cref{ex-uc-dual-fail:ii}.
						
						\begin{example} 
							\label{ex:sc} 
							Let $f\colon \RR^2 \to \RR$ be defined by
							\begin{equation}
								f(\xi_1,\xi_2) = 
								\begin{cases} \xi_1^2, &\mbox{if}\;\;  0 \le \xi_1, \ 0 \le \xi_2 \le\xi_1; \\
									+\infty, &\mbox{otherwise.}
								\end{cases}
							\end{equation}
							Set $A = \partial f$. Then 
							\begin{equation}
								\label{e:sc:i}
								\lim_{\|x\| \to \infty} \inf \left\{\tfrac{\langle x^*, x\rangle}{\|x\|^2} \,\Big|\, x^* \in Ax \right\} > 0.
							\end{equation}
							Hence, $A$ is coercive. However, $A$ does not
							satisfy the growth condition in \cref{thm-umg}\cref{thm-umg:i}.   
						\end{example}
						
						\begin{proof}
							Because $f(x) \ge \frac{1}{2}{\|x\|^2}$ $(\forall x \in \RR^2)$
							it follows that (\ref{e:sc:i}) holds.
							To verify the second claim, set $(\forall \nnn)$ 
							$x_n = (n,0)$, $y_n = (n,n)$ and 
							$x_n^* =y_n^* =  (2n,0)$.
							Observe that $\{(x_n,x_n^*),(y_n,y_n^*)\}\subseteq \gra A$. 
							Consequently,
							$(\forall \nnn)$ $\tfrac{\norm{x_n^*-y_n^*}}{\norm{x_n-y_n}}=\tfrac{0}{n}=0$.
							The proof is complete.
						\end{proof}

						\begin{example} 
							\label{ex-uc-dual-fail}
							Suppose that  
							$S\colon \RR^2 \to \RR^2\colon (x_1,x_2) \to  (-x_2,x_1)$  
							is the rotator by $\pi/2$.
							Then the following properties hold.
							\begin{enumerate}
								\item
								\label{ex-uc-dual-fail:i}
								$S$ and $S^{-1}=-S$ are maximally monotone. 
								\item
								\label{ex-uc-dual-fail:i:ii}
								Both $S$ and $S^{-1}$ are isometries, hence 
								are Lipschitz continuous.
								\item
								\label{ex-uc-dual-fail:ii:ii}
								$	\lim_{\|x-y\| \to \infty} \inf \tfrac{\|Sx - Sy\|}{\|x - y\|}=1 > 0$.
								\item
								\label{ex-uc-dual-fail:i:iii}
								Both $S$ and $S^{-1}$ are
								uniformly continuous.
								\item
								\label{ex-uc-dual-fail:ii}
								Neither $S$ nor $S^{-1}$ are uniformly monotone, nor are they coercive.
								\item
								\label{ex-uc-dual-fail:iii}
								Both $J_S$ and $J_{S^{-1}}$ are strongly monotone.
							\end{enumerate}
						\end{example}
						
						\begin{proof} 
							\cref{ex-uc-dual-fail:i}:
							This is \cite[Example~22.15]{BC2017}.
							\cref{ex-uc-dual-fail:i:ii}: This is clear.
							\cref{ex-uc-dual-fail:ii:ii}\&\cref{ex-uc-dual-fail:i:iii}:
							This is a direct consequence of \cref{ex-uc-dual-fail:i:ii}.		
							\cref{ex-uc-dual-fail:ii}: 
							Indeed, $S$ is neither uniformly monotone nor coercive 
							because   $(\forall x\in \RR^2)$ $\langle Sx,x \rangle = 0$. 
							The same properties hold for $S^{-1}=-S$.  		
							\cref{ex-uc-dual-fail:iii}: 
							Observe that $J_S= (\Id+ S)^{-1}$ and so $\ds J_S = \tfrac{1}{2}(\Id-S)$
							and $(\forall x \in \RR^2)$
							$\scal{J_S x}{ x} = \frac{1}{2}\|x\|^2$.
							Hence $J_S$ is strongly monotone. 
							Similarly, one verifies that $(\forall x \in \RR^2)$
							$\scal{J_{S^{-1}} x}{ x} = \frac{1}{2}\|x\|^2$.
						\end{proof}

						\begin{remark}\
							\begin{enumerate}
								\item 
								In the convex function case, $f$ is uniformly convex 
								if and only if $f^*$ has uniformly continuous derivative 
								(see \cite[Theorem 3.5.5 and Theorem 3.5.6]{Za02}. 
								\cref{ex-uc-dual-fail}\cref{ex-uc-dual-fail:ii}
							shows that this correspondance does not hold
							for general maximally monotone operators. 
								
								\item
								\cref{ex-uc-dual-fail}\cref{ex-uc-dual-fail:ii}\&\cref{ex-uc-dual-fail:iii} 
								also shows in a strong way that if 
								$J_{A^{-1}}$ is uniformly monotone (resp. coercive), 
								it does not automatically follow that
								$A$ is uniformly monotone (resp. coercive). 
								This is in contrast to the situation for convex functions 
								where the infimal convolution of $\|\cdot\|^2$ and $f$ is 
								uniformly convex (resp. supercoercive) if and only if 
								$f$ is uniformly convex (resp. supercoercive). 
								This follows by checking the conjugate of that infimal convolution 
								is uniformly smooth  
								if and only if $f^*$ is. 
								See \cite{BC2017,BV10,Za02} for more information on this.
							\end{enumerate}
						\end{remark}

						\section{Contractions for large distances}
						\label{sec:5}
						We start with the following lemma which provides a characterization of 
						Banach contractions using averaged mappings. 
						\begin{lemma}
							\label{lem:Bcont:duality}
							Let $T\colon X\to X$.
							Then $T $ is a Banach contraction if and only if
							[$T$ is averaged and $-T$ is averaged].
						\end{lemma}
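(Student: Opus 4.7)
The plan is to treat the two implications separately. The forward direction is immediate from the triangle inequality and the equivalent averaged characterization recorded in \cref{d:buttout}\cref{d:av}; the reverse direction is the substantive one, and I would derive it by combining the two averaged inequalities with the parallelogram identity.

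For $(\Rightarrow)$, suppose $T$ is a Banach contraction with constant $k\in\left[0,1\right[$. I would set $\alpha = (1+k)/2 \in \left[\tfrac{1}{2},1\right[$ and verify that $N := \alpha^{-1}(T-(1-\alpha)\Id)$ is nonexpansive: the triangle inequality gives
\begin{equation*}
\norm{Tx-Ty-(1-\alpha)(x-y)}\le \norm{Tx-Ty}+(1-\alpha)\norm{x-y}\le \bigl(k+\tfrac{1-k}{2}\bigr)\norm{x-y}=\alpha\norm{x-y},
\end{equation*}
so $T = (1-\alpha)\Id+\alpha N$ is $\alpha$-averaged. Since $-T$ is also $k$-Lipschitz, the identical argument applied to $-T$ yields the corresponding $\alpha$-averaged decomposition of $-T$.

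For $(\Leftarrow)$, suppose $T$ is $\alpha$-averaged and $-T$ is $\alpha'$-averaged with $\alpha,\alpha'\in\left[0,1\right[$. I would invoke the characterizing inequality from \cref{d:buttout}\cref{d:av} for $T$, and the same inequality applied to $-T$, to obtain that for every $(x,y)\in X\times X$, writing $u=x-y$ and $v=Tx-Ty$,
\begin{align*}
(1-\alpha)\normsq{u-v} &\le \alpha(\normsq{u}-\normsq{v}),\\
(1-\alpha')\normsq{u+v} &\le \alpha'(\normsq{u}-\normsq{v}).
\end{align*}
Multiplying the first by $(1-\alpha')$, the second by $(1-\alpha)$, adding, and applying the parallelogram identity $\normsq{u-v}+\normsq{u+v}=2(\normsq{u}+\normsq{v})$ eliminates the cross terms and delivers
\begin{equation*}
(2-\alpha-\alpha')\normsq{v}\le (3\alpha+3\alpha'-4\alpha\alpha'-2)\normsq{u}.
\end{equation*}
The coefficient $2-\alpha-\alpha'$ is strictly positive since $\alpha,\alpha'<1$, so the estimate $\normsq{Tx-Ty}\le k^2\normsq{x-y}$ holds with $k^2 := \max\{0,(3\alpha+3\alpha'-4\alpha\alpha'-2)/(2-\alpha-\alpha')\}$. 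An elementary rearrangement shows $k^2<1$ is equivalent to $(1-\alpha)(1-\alpha')>0$, which holds by hypothesis, so $T$ is a Banach contraction with constant $k<1$.

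The only obstacle is the bookkeeping in the reverse direction; once the parallelogram identity is used to collapse the cross terms, the strict inequality $k<1$ is a direct consequence of the positivity $(1-\alpha)(1-\alpha')>0$.
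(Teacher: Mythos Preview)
Your proof is correct, and it takes a genuinely different route from the paper's own argument. The paper handles $(\Rightarrow)$ by citing \cite[Proposition~4.38]{BC2017} and handles $(\Leftarrow)$ by passing through the monotone-operator correspondence: it invokes \cite[Proposition~4.3(ii)]{BMW2021} to write $T=R_A$ with both $A$ and $A^{-1}$ strongly monotone, and then \cite[Corollary~4.7]{BMW12} to conclude that $R_A$ is a Banach contraction. Your argument, by contrast, stays entirely at the level of the averaged inequality in \cref{d:buttout}\cref{d:av} and the parallelogram law, with no appeal to resolvents or external references. This buys you a self-contained proof and, as a bonus, an explicit contraction constant $k^2=\max\{0,(3\alpha+3\alpha'-4\alpha\alpha'-2)/(2-\alpha-\alpha')\}$ in terms of the two averaging parameters, which the paper's citation-based route does not provide. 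The paper's approach, on the other hand, situates the lemma within the broader $A\leftrightarrow R_A$ dictionary that the rest of the article exploits, so it is more thematically consistent even if less elementary.
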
	
						\begin{proof}
							$(\RA)$: Suppose that $T $ is a Banach contraction.
							Observe that $-T $ is also a  Banach contraction.
							The conclusion follows from 
							\cite[Proposition~4.38]{BC2017}.
							
							$(\LA)$: 
							Suppose that both $T$ and $-T$ are averaged. 
							It follows from \cite[Proposition~4.3(ii)]{BMW2021} 
							that $T=R_A$ and both $A$ and $A^{-1}$
							are strongly monotone operators.
							Therefore, by \cite[Corollary~4.7]{BMW12} 
							$T$ is a Banach contraction.
						\end{proof}	
						
						Before we proceed, we present the following 
						useful result.
						
						\begin{lemma}
							\label{prop:primal:dual}	
							Let $T\colon X\to X$.
							Suppose that $T$ and $-T$
							are strongly nonexpansive.
							Suppose that $(x_n-y_n)_\nnn$ is bounded
							and that $\norm{x_n-y_n}-\norm{Tx_n-Ty_n}\to 0$.
							Then $(x_n-y_n)\to 0$.
						\end{lemma}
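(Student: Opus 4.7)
The plan is to exploit the symmetry of the hypothesis under the substitution $T \rightsquigarrow -T$ and then combine the two resulting limits. Since $(x_n-y_n)_\nnn$ is bounded and $\norm{x_n-y_n}-\norm{Tx_n-Ty_n}\to 0$, the strong nonexpansiveness of $T$ (see \cref{d:buttout}\cref{d:snonex}) directly yields
\begin{equation*}
(x_n-y_n)-(Tx_n-Ty_n)\to 0.
\end{equation*}

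Next I would observe that $\norm{(-T)x_n-(-T)y_n}=\norm{Tx_n-Ty_n}$, so the same hypothesis reads $\norm{x_n-y_n}-\norm{(-T)x_n-(-T)y_n}\to 0$ with $(x_n-y_n)_\nnn$ still bounded. Applying the strong nonexpansiveness of $-T$ then gives
\begin{equation*}
(x_n-y_n)-((-T)x_n-(-T)y_n)=(x_n-y_n)+(Tx_n-Ty_n)\to 0.
\end{equation*}

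Adding the two displayed limits yields $2(x_n-y_n)\to 0$, and the conclusion follows. There is no serious obstacle here; the proof is essentially a two-line symmetry argument, and its value lies in setting up a self-dual principle to be invoked later in the paper.
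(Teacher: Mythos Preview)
Your proof is correct and matches the paper's own argument essentially line for line: apply the definition of strong nonexpansiveness to $T$ and to $-T$ (using $\norm{(-T)x_n-(-T)y_n}=\norm{Tx_n-Ty_n}$), then add the two resulting limits. The paper states both limits tersely as following ``by assumption'' and adds them; you have simply spelled out the details.
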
	
						\begin{proof}
							By assumption we have
							\begin{subequations}
								\begin{align}
									(x_n-y_n)-(Tx_n-Ty_n)&\to 0
									\\
									(x_n-y_n)+(Tx_n-Ty_n)&\to 0.
								\end{align}
							\end{subequations}	
							Adding the above limits yields the desired result.
						\end{proof}	
						
						In an analogy to \cref{lem:Bcont:duality} we present 
						the following result that characterizes contractions for large distances
						using either strongly nonexpansive mappings or \ssnonex\ mappings.

						\begin{proposition} 
							\label{new:pi}
							Suppose $T\colon  X \to X$ is a nonexpansive mapping. 
							Then the following are equivalent:
							\begin{enumerate}
								\item
								\label{new:pi:i}
								Both $T$ and $-T$ are \ssnonex.
								\item
								\label{new:pi:ii}
								Both $T$ and $-T$ are strongly nonexpansive.
								\item
								\label{new:pi:iii}
								$T$ is a contraction for large distances.	 
							\end{enumerate}
						\end{proposition}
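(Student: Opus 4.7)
My plan is to establish the cycle (iii)$\Rightarrow$(i)$\Rightarrow$(ii)$\Rightarrow$(iii). The implication (i)$\Rightarrow$(ii) is a one-liner: apply \cref{lem:une:sne} separately to $T$ and to $-T$, which is a nonexpansive mapping whenever $T$ is.

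For (iii)$\Rightarrow$(i), suppose $T$ is a contraction for large distances and consider sequences $(x_n)_\nnn,(y_n)_\nnn$ with $\norm{x_n-y_n}^2-\norm{Tx_n-Ty_n}^2\to 0$. Suppose for contradiction that $(x_n-y_n)\not\to 0$; then after passing to a subsequence there exists $\epsilon>0$ with $\norm{x_n-y_n}\ge\epsilon$ for all $n$. By hypothesis there exists $K_\epsilon<1$ such that $\norm{Tx_n-Ty_n}\le K_\epsilon\norm{x_n-y_n}$, so
\begin{equation*}
\norm{x_n-y_n}^2-\norm{Tx_n-Ty_n}^2\ge (1-K_\epsilon^2)\,\epsilon^2>0,
\end{equation*}
a contradiction. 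Hence $x_n-y_n\to 0$ and by nonexpansiveness $(x_n-y_n)-(Tx_n-Ty_n)\to 0$, so $T$ is \ssnonex. Since $-T$ is clearly also a contraction for large distances with the same constants, the same argument applies to $-T$.

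The main work goes into (ii)$\Rightarrow$(iii), which I would prove by contradiction using a segment-subdivision trick. Assume $T$ is not a contraction for large distances: there exist $\epsilon>0$ and sequences $(x_n),(y_n)$ with $d_n:=\norm{x_n-y_n}\ge\epsilon$ and $e_n:=\norm{Tx_n-Ty_n}$ satisfying $e_n/d_n\to 1$. Set $N_n=\lceil d_n/\epsilon\rceil$ and subdivide the segment from $y_n$ to $x_n$ into $N_n$ equal pieces of length $\ell_n=d_n/N_n\in [\epsilon/2,\epsilon]$, say $z_n^k=y_n+(k/N_n)(x_n-y_n)$ for $k=0,\ldots,N_n$. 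Combining the triangle inequality with nonexpansiveness yields
\begin{equation*}
\sum_{k=0}^{N_n-1}\bigl(\ell_n-\norm{Tz_n^{k+1}-Tz_n^k}\bigr)\le d_n-e_n,
\end{equation*}
each summand being nonnegative. Picking the index $k_n$ that attains the minimum and setting $a_n=z_n^{k_n+1}$, $b_n=z_n^{k_n}$, we obtain $\ell_n-\norm{Ta_n-Tb_n}\le (d_n-e_n)/N_n\le\epsilon(1-e_n/d_n)\to 0$, and since $\norm{a_n-b_n}=\ell_n\in[\epsilon/2,\epsilon]$ the sequence $(a_n-b_n)_\nnn$ is bounded with $\norm{a_n-b_n}-\norm{Ta_n-Tb_n}\to 0$. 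Applying \cref{prop:primal:dual} to the pair $T,-T$ (both strongly nonexpansive by assumption) forces $a_n-b_n\to 0$, contradicting $\norm{a_n-b_n}\ge\epsilon/2$.

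The delicate step is (ii)$\Rightarrow$(iii): the obstacle is that the given sequence may have $d_n\to\infty$, so strong nonexpansiveness (which requires a bounded increment sequence) cannot be applied directly. The segment-subdivision pigeonhole is precisely what reduces to the bounded regime while preserving the key asymptotic identity $\norm{a_n-b_n}-\norm{Ta_n-Tb_n}\to 0$ and the lower bound $\norm{a_n-b_n}\ge\epsilon/2$.
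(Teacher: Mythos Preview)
Your proof is correct. The implications (iii)$\Rightarrow$(i) and (i)$\Rightarrow$(ii) match the paper's argument exactly.

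For (ii)$\Rightarrow$(iii) the paper takes a different but closely related route: it fixes $\epsilon>0$, sets
\[
\beta=\sup\Bigl\{\tfrac{\norm{Tx-Ty}}{\norm{x-y}}\;\Big|\;\epsilon\le\norm{x-y}\le 2\epsilon\Bigr\},
\]
and shows $\beta<1$ by contradiction via \cref{prop:primal:dual} (exactly as you do, but on sequences constrained to the annulus $\epsilon\le\norm{x-y}\le 2\epsilon$, where boundedness is automatic). It then extends the contraction inequality to all larger distances by a dyadic midpoint induction: if $\norm{x-y}\in[2^{n+1}\epsilon,2^{n+2}\epsilon]$, split at the midpoint $z=(x+y)/2$ and use the inductive hypothesis on each half. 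Your argument runs the subdivision in the opposite direction: you start from an arbitrarily large segment and pigeonhole down to a bounded pair. Both arguments rest on the same two ingredients (segment subdivision in a Hilbert space plus \cref{prop:primal:dual}); the paper's version is slightly more constructive in that it yields a single $\beta$ valid for all $\norm{x-y}\ge\epsilon$, while yours is a cleaner one-shot contradiction that avoids the inductive bookkeeping.
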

						
						\begin{proof} 
							\cref{new:pi:i} $\Rightarrow$ \cref{new:pi:ii}:
							Apply \cref{lem:une:sne} to both $T$ and $-T$.
							
							\cref{new:pi:ii} $\Rightarrow$ \cref{new:pi:iii}: 
							Fix $\epsilon > 0$, and define
							$$
							\beta = \sup \left\{ \frac{\|Tx - Ty\|}{\|x-y\|}\, \Big{|} \, \ \epsilon \le \|x-y\| \le 2\epsilon\right\}.
							$$
							We claim that $\beta < 1$. Indeed, suppose
							for eventual contradiction that $\beta = 1$. 
							Then there exist  sequences  $(x_n )_\nnn$ and 
							$( y_n)_\nnn$ in $ X$  
							such that 
							\begin{equation} 
								\text{  $(\forall \nnn)$ $\epsilon \le \|x_n - y_n\| \le 2\epsilon$ 
									\;\; and\;\;  $\frac{\|Tx_n - Ty_n\|}{\|x_n-y_n\|} \to  1.$} 
							\end{equation}
							Because $\epsilon \le \|x_n - y_n\| \le 2\epsilon$, 
							we have $\|Tx_n - Ty_n\| - \|x_n - y_n\| \to  0$.
							Therefore, it follows from \cref{prop:primal:dual} that 
							$(x_n - y_n) \to 0$, which is absurd since  
							$(\forall \nnn)$
							$\|x_n - y_n\| \ge \epsilon$. Therefore, $0 \le \beta < 1$, and 
							\begin{equation} 
								\label{new:ei}
								\|Tx - Ty\| \le \beta \|x - y\| \ \ \mbox{whenever} \ \epsilon \le \|x - y\| \le 2\epsilon.
							\end{equation}
							We next show $(\forall k \in  \{0, 1, 2, \ldots\})$  
							\begin{equation}
								 \label{new:eii}
								\|Tx - Ty\| \le \beta \|x - y\| \ \ \mbox{whenever} \ 2^k \epsilon \le \|x - y\| \le 2^{k+1} \epsilon.
							\end{equation}
							We proceed by induction.
							Indeed,  \cref{new:eii} holds for $k = 0$ by \cref{new:ei}.
							Now suppose \cref{new:eii}  holds for $k = n$ where $n \ge 0$. 
							Thus suppose $\{x,y \}\subseteq  X$ satisfies that 
							$2^{n+1}\epsilon \le \|x-y\| \le 2^{n+2}\epsilon$.
							Now let $z = (x + y)/2$, then 
							\begin{equation}
							x - z = \tfrac{x-y}{2} = z - y \ \ \mbox{and so}\ \ 2^n \epsilon \le  \|x - z\| \le 2^{n+1} \epsilon, \ 2^n \epsilon \le  \|z - y\| \le 2^{n+1} \epsilon.
							\end{equation}
							Because  $\|x - z\| = \|z - y\| = \frac{1}{2} \|x- y\|$, 
							the triangle inequality yields
							\begin{subequations} 
								\begin{align}
								\| Tx - Ty\| &= \| Tx - Tz + Tz - Ty\| 
								\le \|Tx - Tz\| + \|Tz - Ty\| \\
								&\le \beta\|x - z\| + \beta\|z - y\| = \beta\|x - y\|.
								\end{align}
							\end{subequations}
							It follows by induction that (\ref{new:eii}) 
							is true  $(\forall k \in  \{0, 1, 2, \ldots\})$, and so $T$ is a contraction for large distances.
							
							\cref{new:pi:iii} $\Rightarrow$ \cref{new:pi:i}:
							Clearly  $T$ is a contraction for large distances
							if and only if $-T$ is a contraction for large distances.
							Therefore it is sufficient to show the implication
							[$T$ is a contraction for large distances $\RA$ 
							$T$ is \ssnonex].
							Suppose  $(x_n)_\nnn$, $( y_n)_\nnn$ 
							are sequences in $X$ such that 
							\begin{equation}
								\label{e:lim:cont}
								\|x_n - y_n\|^2 - \|T x_n - Ty_n\|^2 \to 0.
							\end{equation}
							We claim that $(x_n-y_n)\to 0$. Indeed, 
							suppose for eventual contradiction that  $\limsup \|x_n - y_n\| > 0$. 
							After passing to a subsequence and relabelling if necessary
							$(\exists \epsilon > 0)$  
							such that $(\forall \nnn)$ $\|x_{n} - y_{n}\| \ge \epsilon$. 
							This means $(\exists\beta < 1)$ so that
							$\|T x_{n} - T y_{n}\| \le \beta \|x_{n} - y_{n}\|$. Therefore, 
							\begin{equation}
								(\forall \nnn)\quad 	\|x_{n} - y_{n}\|^2 - \|T x_{n} - Ty_{n}\|^2 
								\ge (1-\beta^2) \|x_{n} - y_{n}\|^2 \ge (1 - \beta^2) \epsilon^2  
							\end{equation}
							and this contradicts \cref{e:lim:cont}. 
							Therefore $\|x_n - y_n\| \to 0$ and consequently 
							$\|T x_n - Ty_n\| \to 0$ which implies $(x_n - y_n) - (Tx_n - Ty_n) \to 0$.
							Thus $T$ is \ssnonex. 
						\end{proof}
						
											It is clear that every Banach contraction  is a contraction for large distances. 
						However, the opposite is not true as we illustrate 
						in \cref{ex-cont-in-large}  below.
						\begin{example}
							\label{ex-cont-in-large} 
							Let 
							\begin{equation}
								T\colon \RR\to \RR\colon x\mapsto 
								\begin{cases}
									1,&x\ge \tfrac{\pi}{2};
									\\
									\sin x,&\abs{x}<\tfrac{\pi}{2};
									\\
									-1, &\text{otherwise}.
								\end{cases}	
							\end{equation}	  
							Then the following hold:
							\begin{enumerate}
								\item
								\label{ex-cont-in-large:i} 
								$T$ is nonexpansive.
								\item
								\label{ex-cont-in-large:ii}
								$T$is \emph{not} a Banach contraction. 
								\item
								\label{ex-cont-in-large:iii}
								$T$ is a contraction for large distances.
								\item
								\label{ex-cont-in-large:iv}
								Both $T$ and $-T$ are \ssnonex.
								\item
								\label{ex-cont-in-large:v}
								Both $T$ and $-T$ are strongly nonexpansive.
							\end{enumerate}
							\begin{figure}
								\begin{center}
									\includegraphics[scale=0.7]{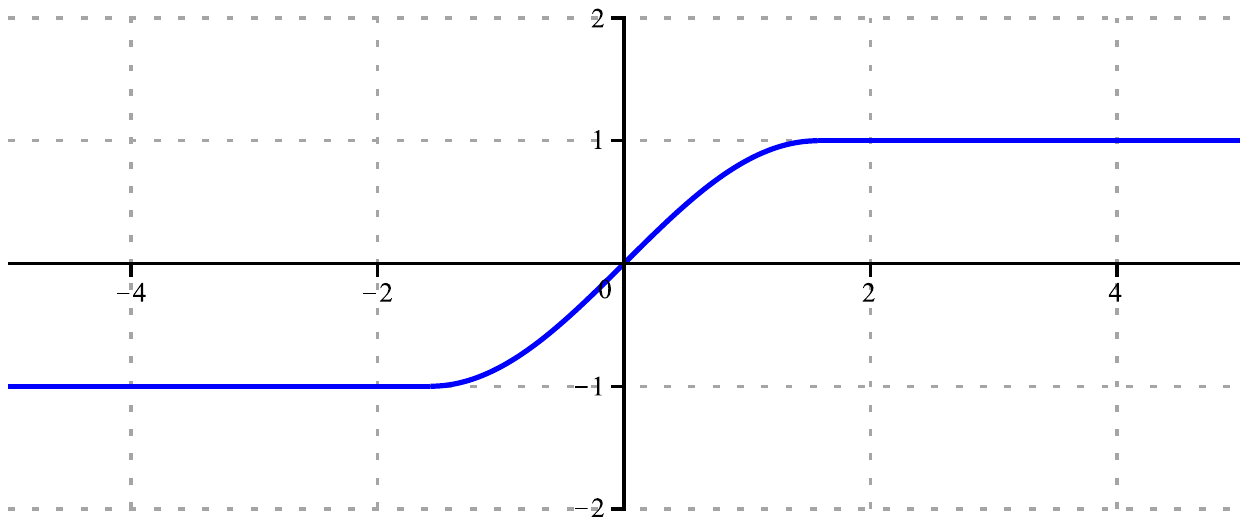}
								\end{center}
								\caption{A \texttt{GeoGebra} snapshot illustrating the mapping
									$T$ in \cref{ex-cont-in-large}. }
							\end{figure}	
						\end{example}
						
						\begin{proof}
							Recall that (see, e.g., \cite[Theorem~5.12]{Beck2017} ) 
							if $T\colon \RR\to \RR$ is differentiable 
							then 
							\begin{equation}
								\label{e:fact}
								\text{
									$T$ is Lipschitz continuous with a constant $K\ge 0$
									if  and only if $(\forall x\in \RR)$ $\abs{T'(x)}\le K$.}
							\end{equation}
							\cref{ex-cont-in-large:i}: 
							One can directly verify that $T$ is differentiable and that
							\begin{equation}
								T'\colon \RR\to \RR\colon x\mapsto 
								\begin{cases}
									\cos x,&\abs{x}<\tfrac{\pi}{2};
									\\
									0, &\text{otherwise}.
								\end{cases}	
							\end{equation}	 
							Hence $(\forall x\in \RR)$ $\abs{T'(x)}\le 1$.
							
							Consequently, by \cref{e:fact},
							$T$ is Lipschitz continuous with a constant  $1$
							and the conclusion follows.
							\cref{ex-cont-in-large:ii}:
							Suppose for eventual contradiction that $T$ is a Banach contraction.
							Then \cref{e:fact} implies that exists $K\in \left[0,1\right[$ such that 
							$(\forall x\in \RR)$ $\abs{T'(x)}\le K<1$. However, $\abs{T'(0)}=\cos 0=1>K$,
							which is absurd.
							
							\cref{ex-cont-in-large:iii}:
							Let $\epsilon > 0$. 
							Observe that if $|t|  \ge  \epsilon/4$ then $|T^\prime(t)| \le \alpha < 1$ 
							where $\alpha  = |T^\prime(\epsilon/4)|$. 
							We choose $\beta = (\alpha + 3)/4$. 
							Now suppose $|x - y| \ge \epsilon$, where $x < y$. 
							In the case $|y| \ge |x|$, we have $y \ge |x -y|/2 \ge \epsilon/2$.  
							Therefore, by the Fundamental theorem of calculus, we write
							
							\begin{equation}
								|Tx - Ty| = \left| \int_x^y T^\prime(t) \, dt \right| \le \int_x^{|x-y|/4} 1 \, dt + \int_{|x-y|/4}^y \alpha \le \beta |x -y|.
							\end{equation}
							
							Similarly, if $|x| \ge |y|$, one has $x \le -|x-y|/2$, 
							and again, $|Tx - Ty| \le \beta |x -y|$. 
							Therefore, $T$ is a contraction for large distances.
							\cref{ex-cont-in-large:iv}--\cref{ex-cont-in-large:v}:
							Combine \cref{ex-cont-in-large:iii}
							with \cref{new:pi}.
						\end{proof}	
						
						The next result and more general variations of it, are 
						well-known in fixed point theory, see, for example, 
						\cite[Corollary, p. 463]{Rak62} and \cite[Theorem 2.1]{AlbGue97}. 
						Nevertheless, we include a simple proof based on  \cref{thm-umg}\cref{thm-umg:ii} for completeness.
						\begin{proposition} 
							\label{cor-cil-fp} 
							Let $T\colon X \to X$ be a contraction for large distances. Let $x_0\in X$
							and set $(\forall \nnn)$ $x_n=T^n x_0$.
							Then $(\exists\bar x\in X)$ such that the following hold:
							\begin{enumerate}
								\item
								\label{cor-cil-fp:i} 
								$\fix T=\{\overline{x}\}$.
								\item 
								\label{cor-cil-fp:ii} 
								$x_n\to \bar x$. 
							\end{enumerate}
						\end{proposition}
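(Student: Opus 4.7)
The plan is to reduce the result to the machinery developed in \cref{sec:3}--\cref{sec:4} by identifying $T$ with a reflected resolvent of a uniformly monotone operator. Since $T$ is nonexpansive, $(\Id+T)/2$ is firmly nonexpansive, hence by \cref{fact:corres} there exists a maximally monotone $A\colon X\rras X$ with $J_A=(\Id+T)/2$ and $R_A=T$. By \cref{new:pi}, $T$ being a contraction for large distances is equivalent to both $T$ and $-T$ being \ssnonex; in particular, $-R_A$ is \ssnonex. Then \cref{prop:um:sne} yields that $A$ is uniformly monotone.

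With $A$ in hand, \cref{thm-umg}\cref{thm-umg:ii} (and consequently \cref{thm-umg:iii}) produces a unique zero $\bar x$ of $A$. Combining this with \cref{eq:fix:JA:RA} gives $\fix T = \fix R_A = \zer A = \{\bar x\}$, which establishes \cref{cor-cil-fp:i}.

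For \cref{cor-cil-fp:ii}, I would argue directly from the contraction-for-large-distances property without invoking \cref{fact:T:sne:converges} (which would only yield weak convergence). Since $T$ is nonexpansive and $T\bar x = \bar x$, the sequence $(\|x_n - \bar x\|)_\nnn$ is nonincreasing and converges to some $c \ge 0$. If $c > 0$, then $(\forall \nnn)$ $\|x_n - \bar x\| \ge c$, so by hypothesis there exists $K_c \in \left[0,1\right[$ with $\|x_{n+1} - \bar x\| = \|Tx_n - T\bar x\| \le K_c \|x_n - \bar x\|$. Iterating forces $\|x_n - \bar x\| \le K_c^n \|x_0 - \bar x\| \to 0$, contradicting $c > 0$. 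Hence $c = 0$ and $x_n \to \bar x$ strongly.

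I do not anticipate a serious obstacle here: the only subtlety is remembering to pass from the nonexpansive $T$ to the firmly nonexpansive $(\Id+T)/2$ in order to invoke \cref{fact:corres}, and to recognize that strong convergence in \cref{cor-cil-fp:ii} requires exploiting the contraction-for-large-distances property rather than the more general strongly nonexpansive iteration theory. Once these two points are noted, the proof is essentially a short chain of citations.
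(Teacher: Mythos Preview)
Your proof is correct and follows essentially the same route as the paper: identify $T=R_A$ for a maximally monotone $A$ via \cref{fact:corres}, use \cref{new:pi} and \cref{prop:um:sne} to conclude $A$ is uniformly monotone, then invoke \cref{thm-umg}\cref{thm-umg:iii} and \cref{eq:fix:JA:RA} for \cref{cor-cil-fp:i}; for \cref{cor-cil-fp:ii} both you and the paper argue by contradiction from the monotone limit $c=\lim_n\|x_n-\bar x\|$ and the contraction constant $K_c<1$. The only cosmetic difference is that you iterate $\|x_n-\bar x\|\le K_c^n\|x_0-\bar x\|$ explicitly, whereas the paper derives a one-step contradiction, but the substance is identical.
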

						
						\begin{proof} 
							\cref{cor-cil-fp:i}: 
							On the one hand because $T$ is nonexpansive, $T = R_A$ for 
							some maximally monotone operator 
							$A\colon  X \rras  X$ (see \cite[Corollary 23.11, Proposition 4.4]{BC2017}). 
							On the other hand, because $-T=-R_A$  is a contraction 
							 for large distances
							it is \ssnonex\ by  \cref{new:pi}.
							Therefore, $A$ is uniformly monotone by \cref{prop:um:sne}.
							Consequently, 	by 
							\cref{thm-umg}\cref{thm-umg:iii}, $A$ has a unique zero. 
							Now combine this with \cref{eq:fix:JA:RA}.
							
							\cref{cor-cil-fp:ii}: 
							Note that $(\norm{x_n-\overline{x}})_\nnn$
							converges by, e.g., \cite[Proposition~5.4(ii)]{BC2017}.
							Now, suppose by way of contradiction that $(x_n)_\nnn $ 
							does not converge in norm to $\bar x$. 
							Then $\lim_\nnn \|x_n - \bar x\| = \epsilon$ 
							where $\epsilon > 0$. Thus we choose $0 < \beta < 1$ so that 
							\begin{equation}
								(\forall \nnn)\quad \|x_{n+1} - \bar x\| = \|Tx_n - T\bar x\| \le \beta \|x_n -\bar x\|. 
							\end{equation}
							Now for $n$ sufficiently large, we have $\|x_n - \bar x\| < \epsilon/\beta$. Then 
							\begin{equation}
								\|x_{n+1} - \bar x\| \le \beta \|x_n - \bar x\| < \epsilon.
							\end{equation}
							This contradiction completes the proof. 
							Alternatively, use \cref{prop:primal:dual}
							with $(x_n,y_n)_\nnn$ replaced by $(T^n x_0, \overline{x})_\nnn$
							to conclude that $T^n x_0-\overline{x}\to 0$.
						\end{proof}	
						
						\section{Self-Dual Properties on Hilbert Spaces}
						\label{sec:6}
						\begin{lemma} 
							\label{lem-res-a} 
							Suppose $C\colon X \to X$ is uniformly continuous. 
							Then for each $\epsilon > 0$ $(\exists M > 0)$ depending on $\epsilon$ so that
							\begin{equation} 
								\|x - y \| \le M\|u-v\| \ \ \ \mbox{whenever}\ \ \|x - y\| 
								\ge \epsilon, \;\;(u,v)\in J_C x \times J_C y.
							\end{equation}
						\end{lemma}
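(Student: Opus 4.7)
The plan is to exploit the defining identity of the resolvent: $u \in J_C x$ and $v \in J_C y$ mean precisely that $x = u + Cu$ and $y = v + Cv$, so
\[
x - y = (u - v) + (Cu - Cv),
\]
and hence $\|x - y\| \le \|u - v\| + \|Cu - Cv\|$. I will invoke uniform continuity of $C$ twice: once to force $\|u - v\|$ to be bounded below whenever $\|x - y\| \ge \epsilon$, and once to obtain a linear-in-$\|u - v\|$ bound on $\|Cu - Cv\|$.

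For the lower bound, I apply uniform continuity at tolerance $\epsilon/2$ to produce $\delta_0 \in \opint{0, \epsilon/2}$ such that $\|p - q\| < \delta_0 \Rightarrow \|Cp - Cq\| < \epsilon/2$; the triangle inequality above then gives $\|u - v\| < \delta_0 \Rightarrow \|x - y\| < \delta_0 + \epsilon/2 \le \epsilon$. Contrapositively, the hypothesis $\|x - y\| \ge \epsilon$ forces $\|u - v\| \ge \delta_0$. For the linear-growth bound, I fix (by uniform continuity at tolerance $1$) some $\delta > 0$ with $\|p - q\| < \delta \Rightarrow \|Cp - Cq\| < 1$, and partition the segment from $u$ to $v$ into $N = \lceil \|u - v\|/\delta \rceil$ equal subsegments; telescoping the implication along consecutive division points via the triangle inequality yields $\|Cu - Cv\| \le N \le \|u - v\|/\delta + 1$.

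Combining the two estimates gives $\|x - y\| \le (1 + 1/\delta)\|u - v\| + 1$, and the lower bound $\|u - v\| \ge \delta_0$ absorbs the additive constant as $1 \le \|u - v\|/\delta_0$, so that $M := 1 + 1/\delta + 1/\delta_0$ works. The only real subtlety is the linear-growth estimate; the triangle inequality alone cannot convert $\|Cu - Cv\|$ into a multiple of $\|u - v\|$, so the chaining argument along a line segment is essential. Note that $C$ is assumed only to be uniformly continuous (not monotone), so $J_C$ is potentially multivalued; the proof is insensitive to the choice $(u, v) \in J_C x \times J_C y$ because the argument uses only the defining relation $x - y = (u - v) + (Cu - Cv)$.
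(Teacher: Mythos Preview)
Your proof is correct and follows the same strategy as the paper's: exploit $x-y=(u-v)+(Cu-Cv)$, use uniform continuity once to force a lower bound on $\|u-v\|$ when $\|x-y\|\ge\epsilon$, and a second time to bound $\|Cu-Cv\|$ linearly in $\|u-v\|$. The only substantive difference is that the paper invokes as a black box the fact that a uniformly continuous map is Lipschitz for large distances (\cite[Proposition~1.11]{BL2000}), obtaining $\|Cu-Cv\|\le K\|u-v\|$ directly once $\|u-v\|\ge\delta$ and hence $M=K+1$ with no additive constant to absorb, whereas you reprove that fact inline via the chaining argument and then absorb the resulting ``$+1$'' using the lower bound $\|u-v\|\ge\delta_0$.

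One minor technicality in your chaining step: with $N=\lceil\|u-v\|/\delta\rceil$ the subsegment length satisfies only $\|u-v\|/N\le\delta$, so when $\|u-v\|/\delta$ happens to be an integer the strict implication $\|p-q\|<\delta\Rightarrow\|Cp-Cq\|<1$ does not literally fire. Replacing $N$ by $\lceil\|u-v\|/\delta\rceil+1$ (or selecting $\delta$ for the closed condition $\|p-q\|\le\delta$) repairs this without affecting the final constant.
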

						
						\begin{proof} Let $\epsilon > 0$. 
							By the uniform continuity of $C$ we choose $0 < \delta < \epsilon/2$ so that
							\begin{equation} \label{eq-eps}
								\|Cu - Cv\| < \frac{\epsilon}{2} \ \ \ \mbox{whenever}\ \ \|u - v\| < \delta.
							\end{equation} 
							Because $C$ is uniformly continuous, it is Lipschitz for large distances
							(see  \cite[Proposition~1.11]{BL2000}). Thus we choose $K > 0$ so that
							\begin{equation} \label{eq-A-lip-large}
								\|Cu - Cv\| \le K \|u - v\| \ \ \ \mbox{whenever}\ \ \|u - v\| \ge \delta.
							\end{equation}
							
							Now let us suppose 
							\begin{equation} \label{eq-lem-res-c}
								\|x - y\| \ge \epsilon, \ \ \ u \in J_C x,\ v \in J_Cy.
							\end{equation}
							We will show that $\|x - y\| \le M\|u - v\|$ where $M = K+1$. First, we verify that $\|u - v\| \ge \delta$ where $\delta$ is from (\ref{eq-eps})  by way of a contradiction. 
							So let us assume to the contrary that $\|u - v\| < \delta$. Then by (\ref{eq-eps}) we have 
							\begin{equation}
								\|Cu - Cv\| < \frac{\epsilon}{2}
							\end{equation}
							Because $u \in J_Cx$ and $v \in J_C y$, this implies $Cu = x - u$ and $Cv = y -v$. Then
							\begin{equation}
								\|x - u - (y - v)\| < \tfrac{\epsilon}{2} \ \Rightarrow \ \|x- y\| - \|u-v\| < \tfrac{\epsilon}{2}\  \Rightarrow \ \|x-y\| < \tfrac{\epsilon}{2} + \delta < \epsilon
							\end{equation}   
							This contradicts (\ref{eq-lem-res-c}), and so $\|u - v\| \ge \delta$. Therefore, using (\ref{eq-A-lip-large}), one has 
							\begin{eqnarray*}
								\|x - y\| &=& \|u + Cu - (v + Cv)\| \le \|u - v\| + \|Cu - Cv\| \\
								&\le& \|u - v\| + K\|u - v\| = M\|u - v\|,
							\end{eqnarray*}
							as desired. 
						\end{proof}
						
						\begin{theorem}
							\label{thm-RA-cont-inl} 
							The following hold:
							\begin{enumerate}
								\item
								\label{thm-RA-cont-inl:i} 
								Suppose $A$ is uniformly monotone and uniformly continuous. 
								Then $R_A$ is a contraction for large distances.
								
								\item
								\label{thm-RA-cont-inl:ii} 
								Suppose $R_A$ is a contraction for large distances. 
								Then $A$ is uniformly monotone with a supercoercive modulus.
							\end{enumerate} 
						\end{theorem}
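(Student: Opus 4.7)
The plan is to prove the two directions separately, using \cref{prop:um:sn:i} as the central bridge between reflected resolvents and modulus functions.

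For \cref{thm-RA-cont-inl:i}, I would combine the quadratic growth of the modulus from \cref{prop-uni-mon-mod} with the reverse-Lipschitz bound on the resolvent from \cref{lem-res-a}. Fix $\epsilon>0$ and suppose $\|x-y\|\ge\epsilon$. Since $A$ is uniformly continuous it is single-valued with $\dom A=X$ (in particular convex), so \cref{prop-uni-mon-mod} supplies a modulus $\phi$ for $A$ with $\phi(t)\ge\beta_\delta t^2$ for $t\ge\delta$. Apply \cref{lem-res-a} to $C=A$ to get $M=M(\epsilon)>0$ with $\|J_A x-J_A y\|\ge\|x-y\|/M\ge\epsilon/M$. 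Then \cref{prop:um:sn:i}\cref{prop:um:sn:i:i} yields
\begin{equation*}
\|x-y\|^2-\|R_A x-R_A y\|^2\ge 4\phi(\|J_A x-J_A y\|)\ge 4\beta_{\epsilon/M}\|J_A x-J_A y\|^2\ge \tfrac{4\beta_{\epsilon/M}}{M^2}\|x-y\|^2,
\end{equation*}
so $K_\epsilon:=\sqrt{\max\{0,\,1-4\beta_{\epsilon/M}/M^2\}}\in[0,1)$ witnesses the contraction for large distances.

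For \cref{thm-RA-cont-inl:ii}, the plan is to extract the supercoercive modulus directly from Minty's reflected-resolvent parametrization \cref{eq:gra:RA:opp}. Given $(x,x^*),(y,y^*)\in\gra A$, set $u=x+x^*$, $v=y+y^*$; then $R_A u-R_A v=(x-y)-(x^*-y^*)$ and $u-v=(x-y)+(x^*-y^*)$, so polarization gives
\begin{equation*}
\|u-v\|^2-\|R_A u-R_A v\|^2=4\langle x-y,x^*-y^*\rangle.
\end{equation*}
Write $t=\|x-y\|$, $s=\|x^*-y^*\|$, $p=\langle x-y,x^*-y^*\rangle\ge0$ (by monotonicity). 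Then $\|u-v\|^2=t^2+s^2+2p\ge t^2$, so if $t\ge\epsilon$ the contraction hypothesis gives $\|R_A u-R_A v\|\le K_\epsilon\|u-v\|$. Substituting and solving the resulting inequality $4p\ge(1-K_\epsilon^2)(t^2+s^2+2p)$ for $p$ yields
\begin{equation*}
p\ge\tfrac{1-K_\epsilon^2}{2(1+K_\epsilon^2)}(t^2+s^2)\ge\tfrac{1-K_\epsilon^2}{2(1+K_\epsilon^2)}t^2.
\end{equation*}
Taking $\epsilon=t$ (and replacing $K_\epsilon$ by the non-increasing envelope $\inf_{\epsilon'\le\epsilon}K_{\epsilon'}$, which is still strictly less than $1$) defines the candidate modulus $\phi(t):=\tfrac{1-K_t^2}{2(1+K_t^2)}t^2$; monotonicity of $K_\cdot$ makes $\phi$ non-decreasing with $\phi(t)>0$ for $t>0$, and $\phi(t)/t\ge\tfrac{1-K_{t_0}^2}{2(1+K_{t_0}^2)}\,t\to\infty$ for any fixed $t_0>0$, so $\phi$ is supercoercive.

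The main obstacle I anticipate is in \cref{thm-RA-cont-inl:ii}: getting not just uniform monotonicity (which follows easily, since contractions for large distances are \ssnonex\ by \cref{new:pi}, whence $-R_A$ is \ssnonex\ and \cref{prop:um:sne} delivers uniform monotonicity of $A$) but the \emph{supercoercive} strengthening. The key trick is recognizing that passing via the Minty change of variables $(u,v)\leftrightarrow(x,x^*,y,y^*)$ transforms the multiplicative contraction estimate on $R_A$ into a quadratic lower bound on $\langle x-y,x^*-y^*\rangle$ automatically, without needing an intermediate appeal to \cref{prop-uni-mon-mod}. In part \cref{thm-RA-cont-inl:i}, the delicate point is simply bookkeeping, ensuring the chain of constants $\beta_{\epsilon/M(\epsilon)}/M(\epsilon)^2$ remains strictly positive so that $K_\epsilon<1$.
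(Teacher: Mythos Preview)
Your proposal is correct and follows essentially the same route as the paper: part \cref{thm-RA-cont-inl:i} is proved by chaining \cref{lem-res-a}, \cref{prop-uni-mon-mod}, and \cref{prop:um:sn:i}\cref{prop:um:sn:i:i} exactly as the paper does, and part \cref{thm-RA-cont-inl:ii} uses the same Minty change of variables $(u,v)=(x+x^*,y+y^*)$ to convert the contraction estimate into a quadratic lower bound on $\langle x-y,x^*-y^*\rangle$. Your constant $\tfrac{1-K_\epsilon^2}{2(1+K_\epsilon^2)}$ is slightly sharper than the paper's $\tfrac{1-\beta^2}{4}$ because you substitute $\|u-v\|^2=t^2+s^2+2p$ rather than the cruder bound $\|u-v\|\ge\|x-y\|$, but this is a cosmetic refinement of the same argument.
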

						
						\begin{proof} 
							\cref{thm-RA-cont-inl:i}:  
							Let $\phi$ be a modulus function for $A$, let 
							$\epsilon > 0$ and suppose $\|x - y\| \ge \epsilon$.    
							On the one hand, it follows from \cref{lem-res-a} that $(\exists K>0)$
							such that
							$\|J_A x - J_A y \| \ge K\|x-y\|\ge K\epsilon $.
							On the other hand, because $\dom A=X$, 
							\cref{prop-uni-mon-mod} implies that 
							$(\exists \alpha > 0)$ such that $(\forall  t \ge K\epsilon)$ 
							$\phi(t) \ge \alpha t^2$.
							Altogether, we learn that 
							\begin{equation}  
								\label{eq-thm-RA-cont-inl-c}
								\phi(\tfrac{1}{2}\|J_A x - J_Ay\|) \ge \tfrac{\alpha K^2}{4}\|x-y\|^2.
							\end{equation}
							Set $\beta=\sqrt{\alpha}K$ and let $(x,y)\in X\times X$.
							Combining \cref{eq-thm-RA-cont-inl-c} and  \cref{prop:um:sn:i} 
							we learn that $\|x - y\| \ge \epsilon\RA
							\|R_A x - R_A y\|^2 + \beta^2\|x - y\|^2 \le \|x - y\|^2 
							$; equivalently,  $ \|x - y\| \ge \epsilon \RA
							\|R_A x - R_A y\| \le(1-\beta) \|x - y\|$.
							That is, $R_A$ is a contraction for large distances. 
							\cref{thm-RA-cont-inl:ii}:
							Let  $\epsilon>0$ and let
							$(x,x^*)\in \gra A$,
							$(y,y^*)\in \gra A$.
							Suppose that $\norm{x-y}\ge \epsilon$.
							Set $(u,v)=(x+x^*, y+y^*)$
							and observe that \cref{eq:gr:A:RA} implies 
							\begin{equation} 
								\label{eq-Minty-rep}
								(x,x^*)=\tfrac{1}{2}(u+R_Au,u-R_Au)
								\text{\;\;and\;\;}
								(y,y^*)=\tfrac{1}{2}(v+R_Av,v-R_Av).
							\end{equation}
							
							It follows from \cref{eq-Minty-rep},
							the nonexpansiveness of $R_A$, 
							and the triangle inequality that 
							\begin{equation} 
								\label{eq-bigger}
								\|x - y\| =\tfrac{1}{2}\norm{u-v-(R_Au-R_Av)}\le \tfrac{1}{2}(\|u - v\| + \|R_A u - R_A v\|) \le\|u - v\|.
							\end{equation}
							Hence $\norm{u-v}\ge \epsilon$.
							Consequently, because $R_A$ is a contraction for large distances, 
							$(\exists \beta \in \left]0,1\right[)$
							such that 
							\begin{equation} 
								\label{eq:opp:i}
								\norm{R_Au-R_Av}\le \beta  \norm{u-v}.
							\end{equation}
							Using \cref{eq-Minty-rep} and \cref{eq:opp:i} 
							we learn that
							\begin{subequations} 
								\begin{align} 
									\langle x - y, x^* - y^*\rangle 
									&= \tfrac{1}{4} \langle u - v + R_A u - R_A v, u - v -( R_Au - R_A v) \rangle \\
									&= \tfrac{1}{4} \left(\|u-v\|^2 - \|R_Au - R_Av\|^2\right) \\
									&\ge \tfrac{1}{4} (1 - \beta^2) \|u - v\|^2   \\
									&\ge \tfrac{1}{4} (1 - \beta^2)\|x - y\|^2.
								\end{align}
							\end{subequations}
							Therefore, for $t \ge \epsilon$, we have 
							\begin{equation}
								\inf\left\{ \langle x - y, x^* - y^* \rangle \, | \, (x,x^*) \in \graph(A), (y,y^*) \in \graph(A), \|x - y\| \ge t\right\} \ge \tfrac{1}{4}(1 - \beta)^2 t^2.
							\end{equation}
							That is, $A$ has a modulus $\phi$ satisfying $\phi(t) \ge \frac{1}{4}(1-\beta^2) t^2$ for $t \ge \epsilon$. 
						\end{proof}
						
						This brings us to our main duality result of this section.
						
						\begin{theorem}
							\label{thm-um-self-dual} 
							The   following are equivalent.
							
							\begin{enumerate}
								\item 
								\label{thm-um-self-dual:i} 
								$A$ is uniformly monotone and uniformly continuous.
								
								\item 
								\label{thm-um-self-dual:ii} 
								$R_A$ is a contraction for large distances.
								
								\item 
								\label{thm-um-self-dual:iii}  Both $A$ and $A^{-1}$ 
								are uniformly monotone with supercoercive moduli.
								
								\item 
								\label{thm-um-self-dual:iv}  
								Both $A$ and $A^{-1}$ are uniformly monotone.
								
								\item 
								\label{thm-um-self-dual:v} 
								$A^{-1}$ is uniformly monotone and uniformly continuous.
								\item 
								\label{thm-um-self-dual:vi} 
								$R_A$ and $R_{A^{-1}}$ are strongly nonexpansive.
							\end{enumerate}
						\end{theorem}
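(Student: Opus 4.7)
The plan is to establish the cyclic chain
\cref{thm-um-self-dual:i}$\Rightarrow$\cref{thm-um-self-dual:ii}$\Rightarrow$\cref{thm-um-self-dual:iii}$\Rightarrow$\cref{thm-um-self-dual:iv}$\Rightarrow$\cref{thm-um-self-dual:v}$\Rightarrow$\cref{thm-um-self-dual:i}, and then to treat \cref{thm-um-self-dual:vi} separately via \cref{new:pi}. The principal tools are \cref{thm-RA-cont-inl}, which supplies both directions of the correspondence between uniform monotonicity of $A$ (with a supercoercive modulus) and $R_A$ being a contraction for large distances; \cref{thm-umg}\cref{thm-umg:iv}, which turns uniform monotonicity of $A$ into uniform continuity of $A^{-1}$; and the self-dual identity $R_{A^{-1}} = -R_A$ from \cref{eq:RA:-RA}. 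A small but crucial observation I would record first is that the property of being a contraction for large distances is invariant under $T\mapsto -T$, because $\norm{(-T)x - (-T)y} = \norm{Tx - Ty}$, so $R_A$ enjoys this property if and only if $R_{A^{-1}} = -R_A$ does.

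For the cycle itself, \cref{thm-um-self-dual:i}$\Rightarrow$\cref{thm-um-self-dual:ii} is exactly \cref{thm-RA-cont-inl}\cref{thm-RA-cont-inl:i}. For \cref{thm-um-self-dual:ii}$\Rightarrow$\cref{thm-um-self-dual:iii} I would invoke \cref{thm-RA-cont-inl}\cref{thm-RA-cont-inl:ii} directly on $A$, and then again on $A^{-1}$, noting via the observation above that $-R_A$ is a contraction for large distances, to obtain the analogous conclusion for $A^{-1}$. The implication \cref{thm-um-self-dual:iii}$\Rightarrow$\cref{thm-um-self-dual:iv} is immediate, and for \cref{thm-um-self-dual:iv}$\Rightarrow$\cref{thm-um-self-dual:v} I combine uniform monotonicity of $A$ with \cref{thm-umg}\cref{thm-umg:iv} to produce uniform continuity of $A^{-1}$, and pair it with the uniform monotonicity of $A^{-1}$ already supplied by \cref{thm-um-self-dual:iv}.

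The subtle step, and the main obstacle, is \cref{thm-um-self-dual:v}$\Rightarrow$\cref{thm-um-self-dual:i}, since uniform monotonicity of $A^{-1}$ does not transfer automatically to $A$. My plan is to detour through \cref{thm-um-self-dual:ii}: applying \cref{thm-RA-cont-inl}\cref{thm-RA-cont-inl:i} to $A^{-1}$ yields that $R_{A^{-1}} = -R_A$ is a contraction for large distances, and by the invariance observation so is $R_A$, placing us in \cref{thm-um-self-dual:ii}. Then \cref{thm-RA-cont-inl}\cref{thm-RA-cont-inl:ii} recovers uniform monotonicity of $A$, while uniform continuity of $A$ follows from \cref{thm-umg}\cref{thm-umg:iv} applied to $A^{-1}$. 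Finally, the equivalence \cref{thm-um-self-dual:ii}$\Leftrightarrow$\cref{thm-um-self-dual:vi} is a direct application of \cref{new:pi} to the nonexpansive mapping $T = R_A$, using $R_{A^{-1}} = -R_A$ to identify the condition on $-T$ with the condition on $R_{A^{-1}}$. All other implications reduce to direct appeals to results already developed in \cref{sec:4}--\cref{sec:5}.
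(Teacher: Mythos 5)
Your argument is correct and follows essentially the same route as the paper: the cyclic chain (i)$\Rightarrow$(ii)$\Rightarrow$(iii)$\Rightarrow$(iv)$\Rightarrow$(v)$\Rightarrow$(i) using \cref{thm-RA-cont-inl}, \cref{thm-umg}\cref{thm-umg:iv}, and the identity $R_{A^{-1}}=-R_A$, with \cref{new:pi} disposing of (vi). The only cosmetic difference is in (v)$\Rightarrow$(i): the paper invokes the already-established implication (i)$\Rightarrow$(v) applied to $A^{-1}$ as a one-line symmetry argument, whereas you re-trace the chain through (ii) explicitly; the two are the same observation unpacked to different degrees.
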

						
						\begin{proof} \cref{thm-um-self-dual:i}  $\Rightarrow$  
							\cref{thm-um-self-dual:ii}: \cref{thm-RA-cont-inl}\cref{thm-RA-cont-inl:i} . 
							
							\cref{thm-um-self-dual:ii}  $\Rightarrow$  
							\cref{thm-um-self-dual:iii}:  Since $R_A$ 
							is a contraction for large distances, so is $R_{A^{-1}} = -R_A$. 
							Thus this follows by applying  
							\cref{thm-RA-cont-inl}\cref{thm-RA-cont-inl:ii}  
							on $R_A$ and on $R_{A^{-1}}$.
							
							\cref{thm-um-self-dual:iii}  $\Rightarrow$  
							\cref{thm-um-self-dual:iv}:  
							is immediate, and \cref{thm-um-self-dual:iv}  $\Rightarrow$  
							\cref{thm-um-self-dual:v}: 
							follows from \cref{thm-umg} applied to $A$ 
							to deduce $A^{-1}$ is uniformly continuous.
							
							\cref{thm-um-self-dual:v}  $\Rightarrow$  
							\cref{thm-um-self-dual:i}: 
							The above implications show \cref{thm-um-self-dual:i} 
							$\Rightarrow$ \cref{thm-um-self-dual:v}, 
							so the reverse implication follows by applying 
							\cref{thm-um-self-dual:i} $\Rightarrow$ 
							\cref{thm-um-self-dual:v} to the operator $A^{-1}$. 
							\cref{thm-um-self-dual:ii} $\siff$ \cref{thm-um-self-dual:vi}: 
							This is a direct consequence of  \cref{new:pi} applied with $T$ replaced by $R_A$.
						\end{proof} 
						
						Let $f\colon X\to \left]-\infty, +\infty\right]$ be convex, lower semicontinuous and proper.
						Recall that (see, e.g., \cite[Corollary~16.30]{BC2017}) 
						\begin{equation}
							\label{eq:f:f^*:subgrad:inv}
							\partial f^*=(\partial f)^{-1},
						\end{equation}	
						that\footnote{Let $f\colon X\to \RR$ be convex.
						Then $f$ is uniformly smooth 
					if $f$ is smooth and $\grad f$ 
				is uniformly continuous.} (see \cite[Theorem~3.5.12]{Za02} )
						\begin{subequations}
							\label{eq:equiv:f:f*:uc:us}
							\begin{align}
								\text{$f$ is uniformly convex}& \text{ $\siff$ 
									$f^*$ is  uniformly smooth}
								\\& 
								\text{ $\siff $ $\partial f$ is uniformly monotone,}
							\end{align}	
						\end{subequations}	
						and that (see, e.g.,  \cite[Theorem~18.15]{BC2017} )
						\begin{subequations}
							\label{eq:equiv:f:f*:sc:ls}
							\begin{align}
								\text{$f$ is strongly convex}& \text{ $\siff$ 
									$f^*$ is differentiable and $\grad f^*$ is Lipschitz continuous}
								\\& 
								\text{ $\siff $ $\partial f$ is  monotone}.
							\end{align}	
						\end{subequations}	
						\begin{example}
							\label{ex-f-f*-uc}
							Let 
							\begin{equation}
								f\colon \RR\to \RR\colon x\mapsto 
								\begin{cases}
									4x^2-2,&x\le -1;
									\\
									2x^4,&-1<x<0;
									\\
									x^{3/2},&0\le x<1;
									\\
									\tfrac{3}{4}x^2+\tfrac{1}{4}, &\text{otherwise}.
								\end{cases}	
							\end{equation}
							Then $f$ is differentiable and $f^\prime$ is continuous 
							and increasing.
							Moreover the following hold:
							\begin{enumerate}
								\item
								\label{ex-f-f*-uc:i}
								Both $f$ and $f^*$ are uniformly convex.
								\item
								\label{ex-f-f*-uc:ii}
								Both $f$ and $f^*$ are uniformly smooth.
								\item
								\label{ex-f-f*-uc:iii}
								Both $f^{\prime}$ and $(f^*)^{\prime}$ are 
								uniformly monotone.
								\item
								\label{ex-f-f*-uc:iv}
								Both $f^{\prime}$ and $(f^*)^{\prime}$ are   uniformly continuous.
								\item
								\label{ex-f-f*-uc:v}
								Neither $f$ nor $f^*$ is  strongly convex.	
							\end{enumerate}
						\end{example}
						\begin{proof}
							It is straightforward to verify that 	
							$f$ is differentiable and that
							\begin{equation}
								\label{eq:def:f'}
								f^\prime\colon
								\RR\to \RR\colon x\mapsto 
								\begin{cases}
									8x,&x\le -1;
									\\
									8x^3,&-1<x<0;
									\\
									\tfrac{3}{2}x^{1/2},&0\le x<1;
									\\
									\tfrac{3}{2}x, &\text{otherwise}.
								\end{cases}	
							\end{equation}
							Hence, $f^\prime$	is continuous and strictly 
							increasing as claimed.
							\cref{ex-f-f*-uc:i}:
							It follows from 
							\cite[Theorem~3.1]{Za83}
							that $f$ is uniformly convex
							on bounded sets.
							It follows from the strong (hence uniform)
							convexity of $x^2$ that
							$f$ is uniformly convex.
							We now prove the uniform convexity of $f^*$.
							In view of 
							\cref{eq:equiv:f:f*:uc:us}
							it suffices to verify that $f$
							is uniformly smooth which can be easily deduced from 
							\cref{eq:def:f'}.
							
							\cref{ex-f-f*-uc:ii}\& \cref{ex-f-f*-uc:iii}:
							Combine \cref{ex-f-f*-uc:i}
							and \cref{eq:equiv:f:f*:uc:us}.
							\cref{ex-f-f*-uc:iv}:
							This follows from \cref{ex-f-f*-uc:ii}.
							
							\cref{ex-f-f*-uc:v}:
							On the one hand $2x^4$ is \emph{not}
							strongly convex, hence $f$ is \emph{not}
							strongly convex.
							On the other hand, 
							$(x^{{3}/{2}})^\prime=\tfrac{3}{2}\sqrt{x}$
							is \emph{not} Lipschitz continuous on $\opint{0,1}$.
							Therefore, in view of \cref{eq:equiv:f:f*:sc:ls}
							applied with $f$ replaced by $f^*$
							we conclude that $f^*$ is \emph{not}
							strongly convex.
						\end{proof}		
						
						\begin{example}
							\label{ex-cont-in-large-b} 	
							Let 
							\begin{equation}
								T\colon \RR\to \RR\colon x\mapsto 
								\begin{cases}
									1,&x\ge \tfrac{\pi}{2};
									\\
									\sin x,&\abs{x}<\tfrac{\pi}{2};
									\\
									-1, &\text{otherwise}.
								\end{cases}	
							\end{equation}
							Set $A=\Big(\tfrac{\Id-T}{2}\Big)^{-1}-\Id$. Then 
							there exists a proper lower semicontinuous convex function 
							$	f\colon \RR\to \ocint{-\infty,+\infty}$
							such that 
							\begin{equation}
								\label{ex-cont-in-large-b:ii:b}
								A=f^\prime. 
							\end{equation}
							Moreover, the following hold:
							
							\begin{enumerate}
								
								\item
								\label{ex-cont-in-large-b:iii}
								Both $A$ and $A^{-1}$ are  maximally monotone and uniformly monotone  with supercoercive modulus.
								\item
								\label{ex-cont-in-large-b:iii:i}
								Both $A$ and $A^{-1}$ are   uniformly continuous.
								\item
								\label{ex-cont-in-large-b:iv}
								Both $A$ and $A^{-1}$ are  \emph{not} strongly monotone.
								\item
								\label{ex-cont-in-large-b:v}
								Both $f$ and $f^*$ are uniformly convex  and uniformly smooth.
								\item
								\label{ex-cont-in-large-b:vi}
								Both $f$ and $f^*$ are  \emph{not} strongly convex.	
							\end{enumerate}

						\end{example}
						
						\begin{proof}
							It is clear that $T=-R_A$. 
							Because $T$ is nonexpansive, from 
							\cref{ex-cont-in-large}\cref{ex-cont-in-large:i}
							we learn that $A$ is maximally monotone.
							Therefore, by e.g., \cite[Corollary~22.23]{BC2017}
							there exists a proper lower semicontinuous convex function 
							$	f\colon \RR\to \ocint{-\infty,+\infty}$
							such that
							$A=\partial f$. 
							Finally, we show in \cref{ex-cont-in-large-b:v} below
							that $f$ is uniformly smooth, hence $A=f^\prime$.
							
							\cref{ex-cont-in-large-b:iii}\&\cref{ex-cont-in-large-b:iii:i}: 
							Combine \cref{ex-cont-in-large}\cref{ex-cont-in-large:iii}, 
							and \cref{thm-um-self-dual} .
							
							\cref{ex-cont-in-large-b:iv}: 
							Suppose for eventual contradiction 
							that $A$ is strongly monotone. Then by \cite[Proposition~4.3(ii)]{BMW2021}   $(\exists \alpha\in\left]0,1\right[)$ such that 
							$-R_A$ is $\alpha$-averaged.
							It follows from \cite[Proposition~4.35]{BC2017} that $(\forall (x,y)\in \RR\times \RR)$
							$\tfrac{1-\alpha}{\alpha}({(\Id+T)x-(\Id+T)y})^2\le ({x-y})^2-({Tx-Ty})^2$. In particular,
							for $y=0$ the above inequality yields 
							$(\forall x\in \left]0,\tfrac{\pi}{2}\right[)$
							\begin{equation}
								\frac{1-\alpha}{\alpha}(x+\sin x)^2\le x^2-\sin^2x.
							\end{equation}	
							Simplifying and multiplying both sides of the above inequality by
							$\alpha$ we obtain 
							$(1-2\alpha)x^2+2(1-\alpha)x\sin x+\sin^2 x\le 0$.
							Rearranging yields $(x+\sin x)^2\le 2\alpha x(x+\sin x)$.
							Because $x\in \left]0,\tfrac{\pi}{2}\right[$, it follows that
							$x+\sin x>0$ and therefore the last inequality is equivalent to
							$1+\tfrac{\sin x}{x}\le 2\alpha$. Taking the limit as $x\to 0^+$
							we learn that  $2\leftarrow 1+\tfrac{\sin x}{x}\le 2\alpha$ which is absurd.
							Hence, $-R_A$ is not averaged; equivalently,  $A$
							is not strongly monotone as claimed.
							
							Using similar argument, one can show that $R_A=-R_{A^{-1}}$ is not averaged; equivalently,  $A^{-1}$
							is not strongly nonexpansive as claimed.
							
							\cref{ex-cont-in-large-b:v}: Combine \cref{ex-cont-in-large-b:iii:i}, \cref{ex-cont-in-large-b:ii:b}
							and \cite[Theorem~3.5.10]{Za83}
							in view of \cref{eq:f:f^*:subgrad:inv}.
							
							\cref{ex-cont-in-large-b:vi}: 
							Combine \cref{ex-cont-in-large-b:iv}, 	
							\cref{ex-cont-in-large-b:ii:b}
							and \cite[Example~22.4(iv)]{BC2017}.
						\end{proof}	
						\begin{remark}
							In   \cref{App:B} we provide finer conclusions about
							the operator $A$ and the function $f$ introduced in 
							\cref{ex-cont-in-large-b}. 
						\end{remark}
						
						\begin{example}
							\label{ex:only:primal}
							Let $f\colon \RR \to \RR\colon x\mapsto \tfrac{1}{4}x^4$ and  let $A = f^\prime$.
							Let $a\colon \RR \to \RR\colon x\mapsto \sqrt[3]{108 x+12\sqrt{81x^2+12}}$ and set 
							\begin{equation}
								T\colon \RR \to \RR\colon x\mapsto x-2\Big(\tfrac{a(x)}{6}-\tfrac{2}{a(x)}\Big).
							\end{equation}	
							Then the following hold:
							\begin{enumerate}
								\item
								\label{ex:only:primal:i}
								$T=-R_A$.
								\item
								\label{ex:only:primal:ii}
								$A=x^3$ is maximally monotone but \emph{not} uniformly continuous. 
								\item
								\label{ex:only:primal:iii}
								$A^{-1}=\sqrt[3]{x}$ is maximally monotone and uniformly continuous. 
								\item
								\label{ex:only:primal:ii:i}
								$A$ is uniformly monotone. 
								\item
								\label{ex:only:primal:iii:i}
								$A^{-1}$ is  \emph{not}  uniformly monotone. 
								\item 
								\label{ex:only:primal:iv}
								$f$ is  uniformly convex. 
								\item 
								\label{ex:only:primal:v}
								$f^*$ is  \emph{not} uniformly convex. 
								\item
								\label{ex:only:primal:vi}
								$T$ is   \ssnonex\ and  strongly nonexpansive.
								\item
								\label{ex:only:primal:vii}
								$-T$ is \emph{neither}  \ssnonex\ \emph{nor} strongly nonexpansive.
								
							\end{enumerate}	
						\end{example}
						
						\begin{proof}
							\cref{ex:only:primal:i}:	
							It is enough to show that $(\forall x\in \RR)$
							$J_A(x)=\tfrac{a(x)}{6}-\tfrac{2}{a(x)}$, equivalently, to show that
							$(\Id+A)\big(\tfrac{a(x)}{6}-\tfrac{2}{a(x)}\big)=x$ .
							Indeed,
							\begin{subequations}
								\begin{align}
									(\Id+A)\big(\tfrac{a(x)}{6}-\tfrac{2}{a(x)}\big)
									&=	\tfrac{a(x)}{6}-\tfrac{2}{a(x)}+\tfrac{(a(x))^3}{6^3}-\tfrac{a(x)}{6}+\tfrac{2}{a(x)}-\tfrac{2^3}{(a(x))^3}
									\\
									&=\tfrac{(a(x))^3}{6^3}-\tfrac{2^3}{(a(x))^3}=\tfrac{(a(x))^6-12^3}{6^3(a(x))^3}=x.
								\end{align}	
							\end{subequations}
							
							\cref{ex:only:primal:ii}\&\cref{ex:only:primal:iii}:	
							This is clear. 
							
							\cref{ex:only:primal:ii:i}--\cref{ex:only:primal:v}:	
							Combine \cref{ex:only:primal:ii}\&\cref{ex:only:primal:iii}
							with \cite[Theorem~3.5.10]{Za83}.

							\cref{ex:only:primal:vi}:
							Combine \cref{ex:only:primal:i}, \cref{ex:only:primal:ii:i}
							and \cref{cor-real-line-eq}.
							
							\cref{ex:only:primal:vii}:		
							Combine \cref{ex:only:primal:i}, \cref{ex:only:primal:iii:i}
							and \cref{cor-real-line-eq}.
						\end{proof}

						\section{Compositions}
						\label{sec:7}
						In this section we examine the behaviour of
						strongly nonexpansive mappings, \ssnonex\ mappings and 
						contractions for large distances under structured compositions.
						The proof of the next result
						follows along the lines of the proof of \cite[Proposition~1.1]{BR77}.		
						\begin{proposition}
							\label{prop:comp:sne}	
							Let $T_1\colon X\to X$ and $T_2\colon X\to X$ be nonexpansive.
							Set $T=T_2T_1$. Then the following hold:
							\begin{enumerate}
								\item
								\label{prop:comp:0}
								Suppose that $T_1$ is strongly nonexpansive and $T_2$ is strongly nonexpansive.
								Then $T $ is strongly nonexpansive.
								\item
								\label{prop:comp:i}
								Suppose that $-T_1$ is strongly nonexpansive and $-T_2$ is strongly nonexpansive.
								Then $T $ is strongly nonexpansive.
								\item
								\label{prop:comp:ii}
								Suppose that $-T_1$ is strongly nonexpansive and $T_2$ is strongly nonexpansive.
								Then $-T $ is strongly nonexpansive.
								\item
								\label{prop:comp:iii}
								Suppose that $T_1$ is strongly nonexpansive and $-T_2$ is strongly nonexpansive.
								Then $-T $ is strongly nonexpansive.
							\end{enumerate}
						\end{proposition}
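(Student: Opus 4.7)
The plan is to model the proof on the standard Bruck--Reich argument for part \cref{prop:comp:0}, and then exploit the observation that ``$-S$ is strongly nonexpansive'' is equivalent, under nonexpansiveness of $S$, to the implication
\begin{equation*}
\text{$(x_n-y_n)$ bounded and } \norm{x_n-y_n}-\norm{Sx_n-Sy_n}\to 0 \;\Longrightarrow\; (x_n-y_n)+(Sx_n-Sy_n)\to 0,
\end{equation*}
simply because $\norm{-Sx-(-Sy)}=\norm{Sx-Sy}$ and $(x-y)-(-Sx-(-Sy))=(x-y)+(Sx-Sy)$. Thus the hypotheses in \cref{prop:comp:i}--\cref{prop:comp:iii} give the \emph{same asymptotic norm comparison} but differ only in the sign pattern of the vector-valued limit.

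First I would set up the telescoping that is common to all four parts. Assume $(x_n-y_n)_\nnn$ is bounded and either $\norm{x_n-y_n}-\norm{Tx_n-Ty_n}\to 0$ or $\norm{x_n-y_n}-\norm{-Tx_n-(-Ty_n)}\to 0$; by the remark above, both say the same thing. Using the nonexpansiveness of $T_1$ and $T_2$, we have the chain
\begin{equation*}
\norm{x_n-y_n}\;\ge\;\norm{T_1 x_n-T_1 y_n}\;\ge\;\norm{T_2T_1 x_n-T_2T_1 y_n},
\end{equation*}
and since the total gap tends to $0$, both individual gaps tend to $0$:
\begin{equation*}
\norm{x_n-y_n}-\norm{T_1 x_n-T_1 y_n}\to 0 \quad\text{and}\quad \norm{T_1 x_n-T_1 y_n}-\norm{T_2T_1 x_n-T_2T_1 y_n}\to 0.
\end{equation*}
Also, $(T_1 x_n-T_1 y_n)_\nnn$ is bounded by nonexpansiveness of $T_1$, so the hypothesis on $T_2$ (or $-T_2$) may be applied to the sequences $T_1 x_n, T_1 y_n$.

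Now I would simply read off, in each of the four cases, the appropriate $\pm$ limit from each hypothesis and combine. Writing $u_n=x_n-y_n$, $v_n=T_1 x_n-T_1 y_n$, $w_n=T_2T_1 x_n-T_2T_1 y_n$, the hypotheses yield one of $u_n\mp v_n\to 0$ together with one of $v_n\mp w_n\to 0$. Adding or subtracting these two relations eliminates $v_n$ and gives exactly $u_n-w_n\to 0$ in cases \cref{prop:comp:0}, \cref{prop:comp:i} (concluding $T$ is SNE) and $u_n+w_n\to 0$ in cases \cref{prop:comp:ii}, \cref{prop:comp:iii} (which, again by the sign observation, is the SNE property of $-T$). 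For instance, in \cref{prop:comp:i} the two hypotheses give $u_n+v_n\to 0$ and $v_n+w_n\to 0$, whose difference is $u_n-w_n\to 0$; in \cref{prop:comp:ii} we get $u_n+v_n\to 0$ and $v_n-w_n\to 0$, whose difference is $u_n+w_n\to 0$; and \cref{prop:comp:iii} is symmetric.

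There is no real obstacle here beyond keeping track of signs; the only point that needs to be stated clearly once (rather than repeated four times) is the equivalence at the top that rephrases the SNE hypothesis on $-S$ as a ``$+$'' version of \cref{eq:def:sn}. Once this is set out, the four cases reduce to a two-line arithmetic of $\pm v_n$. I would therefore present the telescoping and the sign reformulation as a short preamble, prove \cref{prop:comp:0} in full detail following \cite[Proposition~1.1]{BR77}, and then dispatch \cref{prop:comp:i}--\cref{prop:comp:iii} by indicating which sum or difference of the two auxiliary limits produces the desired conclusion.
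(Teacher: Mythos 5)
Your proposal is correct and follows essentially the same route as the paper's proof: telescope the norm gap through $T_1$ and $T_2$, note boundedness of $(T_1x_n-T_1y_n)_\nnn$, invoke the strong-nonexpansiveness hypothesis on each factor (or its negative) to obtain the two vector limits $u_n\mp v_n\to 0$ and $v_n\mp w_n\to 0$, and combine them. The only presentational difference is that you state the sign reformulation of ``$-S$ strongly nonexpansive'' explicitly as a preamble and unify the four cases by a sign bookkeeping, whereas the paper writes out cases \cref{prop:comp:i} and \cref{prop:comp:ii} in full and declares \cref{prop:comp:iii} analogous; this is a matter of exposition, not of substance.
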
	
						\begin{proof}
							\cref{prop:comp:0}:
							This is 	\cite[Proposition~1.1]{BR77}.
							\cref{prop:comp:i}:
							Clearly $T$ is nonexpansive.
							Now suppose that	$(x_n-y_n)_\nnn$ is bounded
							and that  
							$\norm{x_n-y_n}-\norm{Tx_n-Ty_n}\to 0$.
							Observe that the nonexpansiveness of $T_1$, $T_2$ and, consequently, $T$ implies that  
							\begin{equation}
								0\le \norm{x_n-y_n}-\norm{Tx_n-Ty_n}=\underbrace{\norm{x_n-y_n}
									-\norm{T_1x_n-T_1y_n}}_{\ge 0}
								+\underbrace{\norm{T_1x_n-T_1y_n}-\norm{Tx_n-Ty_n}}_{\ge 0}\to 0.
							\end{equation}	
							Consequently, we learn that
							\begin{subequations}
								\begin{align}
									\norm{x_n-y_n}-\norm{(-T_1)x_n-(-T_1)y_n}=\norm{x_n-y_n}-\norm{T_1x_n-T_1y_n}\to 0,\\
									\norm{T_1x_n-T_1y_n}-\norm{(-T_2)(T_1x_n)-(-T_2)(T_1y_n)}=\norm{T_1x_n-T_1y_n}-\norm{Tx_n-Ty_n}\to 0
								\end{align}	
							\end{subequations}
							The nonexpansiveness  of $T_1$ implies that 
							$\norm{T_1x_n-T_1y_n}$ is bounded.	
							Recalling that $-T_1$ is strongly nonexpansive and $-T_2$ is strongly nonexpansive
							we obtain
							\begin{subequations}
								\begin{align}
									(x_n-y_n)+(T_1x_n-T_1y_n)&\to 0
									\\
									(T_1x_n-T_1y_n)+(T_2T_1x_n-T_2T_1y_n)&\to 0.
								\end{align}
							\end{subequations}
							Hence, 
							\begin{equation}
								(x_n-y_n)-(Tx_n-Ty_n)=(x_n-y_n)+(T_1x_n-T_1y_n)-((T_1x_n-T_1y_n)+(T_2T_1x_n-T_2T_1y_n))\to 0.
							\end{equation}
							\cref{prop:comp:ii}:
							Proceeding similar to 	\cref{prop:comp:i} we learn that 	
							\begin{subequations}
								\begin{align}
									(x_n-y_n)+(T_1x_n-T_1y_n)&\to 0
									\\
									(T_1x_n-T_1y_n)-(T_2T_1x_n-T_2T_1y_n)&\to 0.
								\end{align}
							\end{subequations}
							Hence, 
							\begin{equation}
								(x_n-y_n)+(Tx_n-Ty_n)
								=(x_n-y_n)+(T_1x_n-T_1y_n)-((T_1x_n-T_1y_n)-(T_2T_1x_n-T_2T_1y_n))\to 0.
							\end{equation}
							\cref{prop:comp:iii}:
							Proceed similar to 	\cref{prop:comp:ii}. 
						\end{proof}
						
						The following analogous result 
						holds for \vsne\ mappings.
						\begin{proposition}
							\label{prop:comp:vne}	
							Let $T_1\colon X\to X$ and $T_2\colon X\to X$ be nonexpansive.
							Set $T=T_2T_1$. Then the following hold:
							\begin{enumerate}
								\item
								\label{prop:comp:vne:0}
								Suppose that $T_1$ is \vsne\ and $T_2$ is \vsne.
								Then $T $ is \vsne.
								\item
								\label{prop:comp:vne:i}
								Suppose that $-T_1$ is \vsne\ and $-T_2$ is \vsne.
								Then $T $ is \vsne.
								\item
								\label{prop:comp:vne:ii}
								Suppose that $-T_1$ is \vsne\ and $T_2$ is \vsne.
								Then $-T $ is \vsne.
								\item
								\label{prop:comp:vne:iii}
								Suppose that $T_1$ is \vsne\ 
								and $-T_2$ is \vsne.
								Then $-T $ is \vsne.
							\end{enumerate}
						\end{proposition}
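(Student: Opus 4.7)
The plan is to adapt the proof strategy of \cref{prop:comp:sne} to the super strongly nonexpansive setting, exploiting the fact that the definition of \vsne\ replaces the difference of norms by the difference of squared norms. This turns out to be the \emph{right} quantity for telescoping through compositions of nonexpansive maps, and in fact it eliminates the need for the boundedness hypothesis that appears in the strongly nonexpansive case.

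For each of the four parts, I would begin by observing that $T=T_2T_1$ is nonexpansive (since nonexpansiveness of $T_i$ and $-T_i$ are equivalent, each pair of hypotheses forces both $T_1$ and $T_2$ to be nonexpansive). Then I would use the key identity
\begin{equation}
\|x_n-y_n\|^2-\|Tx_n-Ty_n\|^2
=\bigl(\|x_n-y_n\|^2-\|T_1x_n-T_1y_n\|^2\bigr)+\bigl(\|T_1x_n-T_1y_n\|^2-\|Tx_n-Ty_n\|^2\bigr),
\end{equation}
where, by the nonexpansiveness of $T_1$ and $T_2$, both summands on the right are nonnegative. Therefore, assuming $\|x_n-y_n\|^2-\|Tx_n-Ty_n\|^2\to 0$ forces each summand individually to $0$.

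At this point the four cases diverge only by signs. For \cref{prop:comp:vne:i}, apply the \vsne\ property of $-T_1$ to get $(x_n-y_n)+(T_1x_n-T_1y_n)\to 0$, and the \vsne\ property of $-T_2$ applied to the pair $(T_1x_n,T_1y_n)$ to get $(T_1x_n-T_1y_n)+(Tx_n-Ty_n)\to 0$; subtracting yields $(x_n-y_n)-(Tx_n-Ty_n)\to 0$, so $T$ is \vsne. \cref{prop:comp:vne:0} proceeds identically but with both sign-adjusted limits, giving directly $(x_n-y_n)-(T_1x_n-T_1y_n)\to 0$ and $(T_1x_n-T_1y_n)-(Tx_n-Ty_n)\to 0$, which add to give the conclusion. \cref{prop:comp:vne:ii} and \cref{prop:comp:vne:iii} combine one signed and one unsigned limit, producing $(x_n-y_n)+(Tx_n-Ty_n)\to 0$, which is the \vsne\ condition for $-T$.

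I do not expect any real obstacle: the only thing to check carefully is that the \vsne\ hypothesis can indeed be applied to the sequence $(T_1x_n,T_1y_n)_\nnn$ without a boundedness requirement, but that is precisely what distinguishes \cref{def:ssexp} from the strongly nonexpansive definition. Compared to \cref{prop:comp:sne}, the argument is actually shorter, since we do not need to invoke the nonexpansiveness of $T_1$ to conclude that $(T_1x_n-T_1y_n)_\nnn$ is bounded.
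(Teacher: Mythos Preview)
Your proposal is correct and follows essentially the same approach as the paper: the paper proves \cref{prop:comp:vne:0} exactly via the telescoping identity on squared norms that you wrote down, then dispatches \cref{prop:comp:vne:i}--\cref{prop:comp:vne:iii} by the same sign bookkeeping you outline (referring back to the proof of \cref{prop:comp:sne}). Your remark that the boundedness step is no longer needed is also accurate and is implicit in the paper's argument.
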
	
						\begin{proof}
							\cref{prop:comp:vne:0}:
							Let $(x,y)\in X\times X$
							and suppose that 
							$(x_n)_\nnn$ and $(y_n)_\nnn$
							are sequences in $X$ such that
							$0\le \norm{x_n-y_n}^2-\norm{Tx_n-Ty_n}^2\to 0$.
							Rewrite the above limit as
							\begin{equation}
								0\le \norm{x_n-y_n}^2-\norm{T_1x_n-T_1y_n}^2
								+\norm{T_1x_n-T_1y_n}^2-\norm{Tx_n-Ty_n}^2\to 0,
							\end{equation}	
							and observe that the nonexpansiveness 	of $T_1$
							and $T_2$ implies 
							\begin{equation}
								\norm{x_n-y_n}^2-\norm{T_1x_n-T_1y_n}^2\to 0
								\quad\text{and}\quad 
								\norm{T_1x_n-T_1y_n}^2-\norm{Tx_n-Ty_n}^2\to 0.
							\end{equation}	 
							Because  $T_1$
							and $T_2$ are \vsne\ we learn that 
							\begin{subequations}
								\begin{align}
									(x_n-y_n)-(T_1x_n-T_1y_n)&\to 0
									\label{eq:vsne:comp:i}
									\\	
									(T_1x_n-T_1y_n)-(Tx_n-Ty_n)&\to 0.
									\label{eq:vsne:comp:ii}
								\end{align}		
							\end{subequations}	
							Adding 	\cref{eq:vsne:comp:i} and \cref{eq:vsne:comp:ii}
							yields $(x_n-y_n)-(Tx_n-Ty_n)\to 0$, hence $T$
							is \vsne\ as claimed.
							\cref{prop:comp:vne:i}--\cref{prop:comp:vne:iii}:
							Proceed similar to 	the proof of 
							\cref{prop:comp:sne}\cref{prop:comp:i}--\cref{prop:comp:iii}. 
						\end{proof}
						
						We now turn to compositions of finitely many mappings each of which is either
						strongly nonexpansive or its negative is 
						strongly nonexpansive.
						\begin{theorem}
							\label{thm:comp:m:sne}
							Let $m\ge 2$, let $I=\{1,\ldots , m\}$,
							let $J\subseteq I$, and let $(T_i)_{i\in I}$
							be a family of nonexpansive mappings from 
							$X$ to $X$.
							Suppose that $(\forall i\in I\smallsetminus J)$
							$T_i$ is strongly nonexpansive
							and that 
							$(\forall j\in J)$
							$-T_j$ is strongly nonexpansive.
							Set 
							\begin{equation}
								T=T_m\ldots T_1. 
							\end{equation}		
							Then $(-1)^{|J|}T$ is strongly nonexpansive.
						\end{theorem}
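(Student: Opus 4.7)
The plan is to proceed by induction on $m$, leveraging \cref{prop:comp:sne} which is precisely the base case $m=2$ (covering all four parity combinations of whether the first, second, both, or neither of the factors has strongly nonexpansive negative).

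For the inductive step, suppose the result holds for compositions of $m-1$ factors. Given the composition $T=T_m\cdots T_1$, I would set
\begin{equation}
S=T_{m-1}\cdots T_1,\qquad J'=J\cap\{1,\ldots,m-1\},
\end{equation}
so that $T=T_m S$. By the inductive hypothesis applied to the family $(T_i)_{i\in\{1,\ldots,m-1\}}$ with the index set $J'$, the operator $(-1)^{|J'|}S$ is strongly nonexpansive. The nonexpansiveness of $S$ follows from the nonexpansiveness of each $T_i$.

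The remainder is a bookkeeping argument, splitting into four cases according to (i) whether $m\in J$ (equivalently whether $T_m$ or $-T_m$ is the strongly nonexpansive one), and (ii) the parity of $|J'|$ (equivalently whether $S$ or $-S$ is strongly nonexpansive). In each case I would apply the appropriate item of \cref{prop:comp:sne} to the two-fold composition $T_mS$:
\begin{itemize}
\item $m\notin J$ and $|J'|$ even: apply \cref{prop:comp:sne}\cref{prop:comp:0} to conclude $T$ is strongly nonexpansive; then $|J|=|J'|$ is even, so $(-1)^{|J|}T=T$ is as desired.
\item $m\notin J$ and $|J'|$ odd: apply \cref{prop:comp:sne}\cref{prop:comp:ii} to $T=T_m S$ (with $-S$ strongly nonexpansive and $T_m$ strongly nonexpansive) to conclude $-T$ is strongly nonexpansive; and $|J|=|J'|$ is odd, so $(-1)^{|J|}T=-T$ is strongly nonexpansive.
\item $m\in J$ and $|J'|$ even: apply \cref{prop:comp:sne}\cref{prop:comp:iii} to conclude $-T$ is strongly nonexpansive; and $|J|=|J'|+1$ is odd, so $(-1)^{|J|}T=-T$ is strongly nonexpansive.
\item $m\in J$ and $|J'|$ odd: apply \cref{prop:comp:sne}\cref{prop:comp:i} to conclude $T$ is strongly nonexpansive; and $|J|=|J'|+1$ is even, so $(-1)^{|J|}T=T$ is strongly nonexpansive.
\end{itemize}

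I do not anticipate a real obstacle, since the inductive argument reduces everything to the four two-factor cases already handled in \cref{prop:comp:sne}, and the parity accounting lines up so that the sign $(-1)^{|J|}$ in front of $T$ exactly matches the sign dictated by that proposition. The only thing one must be careful about is pairing the correct version of the two-factor proposition with the correct parity configuration, which the four bullets above verify.
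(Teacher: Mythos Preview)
Your proposal is correct and follows essentially the same approach as the paper: both proceed by induction, using \cref{prop:comp:sne} as the base case $m=2$, and at the inductive step apply the appropriate item of \cref{prop:comp:sne} to the two-factor composition $T_{k+1}(T_k\cdots T_1)$ according to whether $k+1\in J$ and the parity of $|J\cap\{1,\ldots,k\}|$. The four-case parity bookkeeping you lay out matches the paper's exactly.
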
	
						\begin{proof}
							We proceed by induction on $k\in \{2,\ldots,m\}$.
							To this end, let us set $(\forall k\in \{2,\ldots,m\})$
							$J_k=\menge{j\in J}{j\le k}$.	
							By \cref{prop:comp:sne} the claim is true for 
							$k=2$.
							Now assume that,
							for some $k\in \{2,\ldots,m\}$,
							we have $(-1)^{|J_k|}T_k\ldots T_1$
							is strongly nonexpansive.
							If $T_{k+1}$ is strongly nonexpansive
							then $|J_{k+1}|=|J_k|$
							and the conclusion follows by applying 
							\cref{prop:comp:sne}\cref{prop:comp:0}
							(respectively \cref{prop:comp:sne}\cref{prop:comp:ii})
							with $(T_1,T_2)$ replaced by $(T_m\ldots T_1,T_{m+1})$ in the case $|J_k|$ is even
							(respectively $|J_k|$ is odd).
							If $-T_{k+1}$ is strongly nonexpansive
							then $|J_{k+1}|=|J_k|+1$
							and the conclusion follows by applying 
							\cref{prop:comp:sne}\cref{prop:comp:i}
							(respectively \cref{prop:comp:sne}\cref{prop:comp:iii})
							with $(T_1,T_2)$ replaced by $(T_m\ldots T_1,T_{m+1})$ in the case $|J_k|$ is even
							(respectively $|J_k|$ is odd).
						\end{proof}	
						
						\begin{theorem}
							\label{thm:comp:m:vsne}
							Let $m\ge 2$, let $I=\{1,\ldots , m\}$,
							let $J\subseteq I$, and let $(T_i)_{i\in I}$
							be a family of nonexpansive mappings from 
							$X$ to $X$.
							Suppose that $(\forall i\in I\smallsetminus J)$
							$T_i$ is \vsne\
							and that 
							$(\forall j\in J)$
							$-T_j$ is \vsne.
							Set 
							\begin{equation}
								T=T_m\ldots T_1. 
							\end{equation}		
							Then $(-1)^{|J|}T$ is \vsne.
						\end{theorem}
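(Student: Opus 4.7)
The plan is to mimic precisely the inductive proof of \cref{thm:comp:m:sne}, replacing each invocation of \cref{prop:comp:sne} by the corresponding case of \cref{prop:comp:vne}. Since \cref{prop:comp:vne} provides the same four-case compositional calculus for \ssnonex\ mappings as \cref{prop:comp:sne} does for strongly nonexpansive mappings, the structural argument carries over essentially verbatim.

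First I would introduce the partial compositions and track the sign-parity bookkeeping. For each $k\in\{2,\ldots,m\}$, set $J_k=\menge{j\in J}{j\le k}$ and $S_k=T_k\cdots T_1$. The induction hypothesis $H(k)$ asserts that $(-1)^{|J_k|}S_k$ is \ssnonex. The base case $H(2)$ is exactly a direct reading of \cref{prop:comp:vne}: applying parts \cref{prop:comp:vne:0}, \cref{prop:comp:vne:i}, \cref{prop:comp:vne:ii}, \cref{prop:comp:vne:iii} to the pair $(T_1,T_2)$ covers all four possibilities for which of $T_1,T_2,-T_1,-T_2$ are assumed \ssnonex, and in each case the displayed conclusion matches $(-1)^{|J_2|}T_2T_1$ being \ssnonex.

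For the inductive step, assume $H(k)$ and apply \cref{prop:comp:vne} with $(T_1,T_2)$ replaced by $(S_k,T_{k+1})$, where the first entry is nonexpansive by composition and is \ssnonex\ (or its negative is) according to the parity of $|J_k|$. There are four sub-cases indexed by whether $T_{k+1}$ itself or $-T_{k+1}$ is assumed \ssnonex, combined with the two parities of $|J_k|$. In each sub-case $|J_{k+1}|$ equals $|J_k|$ or $|J_k|+1$, so the parity $(-1)^{|J_{k+1}|}$ matches exactly the output sign prescribed by the appropriate part of \cref{prop:comp:vne}; for instance, if $|J_k|$ is even (so $S_k$ is \ssnonex) and $-T_{k+1}$ is \ssnonex, then \cref{prop:comp:vne}\cref{prop:comp:vne:iii} gives that $-T_{k+1}S_k=-S_{k+1}$ is \ssnonex, consistent with $|J_{k+1}|=|J_k|+1$ being odd. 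Taking $k=m$ yields the claim.

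The only real work is the case analysis tabulating these four sign combinations, and there is no genuine obstacle since the proposition was stated precisely to make the bookkeeping mechanical; the argument is entirely parallel to \cref{thm:comp:m:sne}. Accordingly I expect the proof to consist of at most a single paragraph invoking \cref{prop:comp:vne} in the same pattern as the proof of \cref{thm:comp:m:sne}, with the sign tracked by the parity of $|J_k|$.
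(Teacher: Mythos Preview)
Your proposal is correct and matches the paper's own proof exactly: the paper simply says to proceed as in the proof of \cref{thm:comp:m:sne} but replace each use of \cref{prop:comp:sne}\cref{prop:comp:0}--\cref{prop:comp:iii} by the corresponding item of \cref{prop:comp:vne}\cref{prop:comp:vne:0}--\cref{prop:comp:vne:iii}. Your write-up is in fact more detailed than the paper's one-line proof.
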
	
						\begin{proof}
							Proceed similar to the proof of
							\cref{thm:comp:m:sne} 
							but use 
							\cref{prop:comp:vne}\cref{prop:comp:vne:0}--\cref{prop:comp:vne:iii} instead of 
							\cref{prop:comp:sne}\cref{prop:comp:0}--\cref{prop:comp:iii}.
						\end{proof}	
						
						We conclude this section with the following result 
						concerning compositions that involve contractions for large distances.
						\begin{proposition}
							\label{lem:comp:cont:large}
							Let $m\ge 2$, let $I=\{1,\ldots , m\}$,
							let $\overline{m}\in I$, and let $(T_i)_{i\in I}$
							be a family of nonexpansive mappings from 
							$X$ to $X$ and suppose that $T_{\overline{m}}$ 
							is a contraction for large distances.
							Set 
							\begin{equation}
								T=T_m\ldots T_1. 
							\end{equation}		
							Then $T$ is a contraction for large distances.
						\end{proposition}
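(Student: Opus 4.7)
The plan is to split the composition around the distinguished factor $T_{\overline{m}}$ and handle by cases according to whether the intermediate distance has shrunk below a threshold. Set $U=T_{\overline{m}-1}\circ\cdots\circ T_1$ (the identity if $\overline{m}=1$) and $V=T_m\circ\cdots\circ T_{\overline{m}+1}$ (the identity if $\overline{m}=m$), so that $T=V\circ T_{\overline{m}}\circ U$. Both $U$ and $V$ are nonexpansive as compositions of nonexpansive maps.

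Fix $\epsilon>0$. Since $T_{\overline{m}}$ is a contraction for large distances, there exists $\beta\in\left]0,1\right[$ (corresponding to threshold $\epsilon/2$) so that $\|T_{\overline{m}}u-T_{\overline{m}}v\|\le\beta\|u-v\|$ whenever $\|u-v\|\ge\epsilon/2$. Let $(x,y)\in X\times X$ with $\|x-y\|\ge\epsilon$ and write $u=Ux$, $v=Uy$, noting that $\|u-v\|\le\|x-y\|$ by nonexpansiveness of $U$.

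In the first case $\|u-v\|\ge\epsilon/2$, the contraction-for-large-distances property of $T_{\overline{m}}$ together with nonexpansiveness of $V$ yields
\begin{equation}
\|Tx-Ty\|\le\|T_{\overline{m}}u-T_{\overline{m}}v\|\le\beta\|u-v\|\le\beta\|x-y\|.
\end{equation}
In the second case $\|u-v\|<\epsilon/2$, the nonexpansiveness of $T_{\overline{m}}$ and $V$ yields
\begin{equation}
\|Tx-Ty\|\le\|u-v\|<\tfrac{\epsilon}{2}\le\tfrac{1}{2}\|x-y\|.
\end{equation}
Setting $K_\epsilon=\max\{\beta,\tfrac{1}{2}\}<1$ gives $\|Tx-Ty\|\le K_\epsilon\|x-y\|$ in both cases, which is the desired conclusion.

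The only subtle point, and where an initial attempt might stumble, is that nonexpansive maps only provide one-sided control $\|Ux-Uy\|\le\|x-y\|$ (not a lower bound). So applying the contraction-for-large-distances property of $T_{\overline{m}}$ is not immediate: the image pair $(u,v)$ might have collapsed below the threshold $\epsilon$. The case split above circumvents this, since a collapse below $\epsilon/2$ already guarantees a contraction factor of at most $1/2$ relative to the original distance $\|x-y\|\ge\epsilon$.
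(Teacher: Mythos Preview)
Your proof is correct. The key idea---the $\epsilon/2$ threshold case split---is exactly the one the paper uses in its base case $m=2$ (Case~2, where the outer factor is the contraction for large distances). The difference is structural: the paper establishes $m=2$ and then proceeds by induction on $m$, at each step reducing to the two-factor case; you instead observe up front that $T=V\circ T_{\overline{m}}\circ U$ with $U,V$ nonexpansive, which collapses the general case to a single application of that same case split. Your packaging is slightly cleaner and avoids the induction altogether, while the paper's inductive version makes explicit that the result is stable under iterated composition; both routes rest on the identical analytic observation.
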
	
						\begin{proof}
							Let $(x,y)\in X\times X$
							and let $\epsilon >0$. Suppose that 
							$\norm{x-y}\ge \epsilon$.
							We proceed by induction on $k\in \{2,\ldots,m\}$.
							For $m=2$ we examine two cases:
							\textsc{Case~1:} $T_1$ is a contraction for large distances.
							Then $(\exists \beta_\epsilon\in \left]0,1\right[)$
							such that $\norm{T_1x-T_1y}\le \beta_\epsilon \norm{x-y}$.
							Therefore, because $T_2 $ is nonexpansive 
							we learn that
							$\norm{T x-T y}\le \norm{T_1x-T_1y}\le \beta_\epsilon \norm{x-y}$.
							\textsc{Case~2:} $T_2$ is a contraction for large distances.
							If $\norm{T_1x-T_1y}\ge \tfrac{\epsilon}{2}$.
							Then$(\exists \alpha_\epsilon\in \left]0,1\right[)$
							such that $\norm{Tx-Ty}\le \alpha_{\epsilon/2} \norm{x-y}$.
							Now suppose that $\norm{T_1x-T_1y}< \tfrac{\epsilon}{2}$.
							Then $\norm{Tx-Ty}\le\norm{T_1x-T_1y}< \tfrac{\epsilon}{2}\le \tfrac{1}{2}\norm{x-y} $.
							Setting $\beta_\epsilon=\max\{\alpha_{\epsilon/2},\tfrac{1}{2} \}$ 
							proves the claim for $m=2$.
							Now assume that,
							for some $k\in \{2,\ldots,m\}$,
							we have $T_k\ldots T_1$ is a contraction for large distances
							whenever $T_{\overline{k}}$ is a contraction for large distances,
							$\overline{k}\in \{1,\ldots,k\}$.
							Consider the composition $T_{k+1}T_k\ldots T_1$.
							If $(\exists \overline{k}\in \{1,\ldots, k\})$
							such that $T_{\overline{k}}$ is a contraction 
							for large distances
							then the inductive hypothesis implies that
							$T_k\ldots T_1$ is a contraction for large distances.
							Otherwise, $T_{k+1} $ must be a contraction for large distances.
							In both cases
							the conclusion follows from applying the base 
							case with $(T_1,T_2)$ replaced by $(T_k\ldots T_1,T_{k+1})$.
							The proof is complete.
						\end{proof}	
						
						
						\section{Application to splitting  algorithms}	
						\label{sec:8}
						In this section we use  our earlier conclusions
						to obtain stronger and more refined convergence results
						for some important splitting methods (see, e.g., \cite[Chapter~26]{BC2017}); 
						namely, Peaceman--Rachford algorithm
					(see \cref{thm:PR}\cref{thm:PR:iii:a}--\cref{thm:PR:iii:b}\&\cref{thm:PR:ii} below),
					Douglas--Rachford algorithm
					(see \cref{thm:DR}\cref{thm:DR:ii}  below)
					and forward-backward algorithm
					(see \cref{thm:FB}\cref{thm:FB:ii}  below).

						Let $C\colon X\rras X$ be uniformly monotone
						and suppose that $\zer C\neq \fady$.
						Then $C$ is strictly monotone and it follows from,
						e.g., \cite[Proposition~23.35]{BC2017} that 
						\begin{equation}
							\label{eq:zer:uniq}
							\text{$\zer C$ is a singleton}.
						\end{equation}

						\begin{theorem}[{\bf Peaceman--Rachford algorithm}]
							\label{thm:PR}
							Suppose that $A$ is uniformly monotone. Set $T=R_BR_A$.
							Let $x_0\in X$ and set $(\forall \nnn)$:
							\begin{subequations}
								\begin{align}
									x_{n+1}&=Tx_n,
									\\
									y_{n}&=J_A x_n.
								\end{align}
							\end{subequations}	
							Then the following hold.
							\begin{enumerate}
								
								\item
								Suppose that  $\zer(A+B)\neq \fady$. Then we have:
								\label{thm:PR:i}
								\begin{enumerate}	
									\item 
									\label{thm:PR:i:a}
									$\zer(A+B)$ is a singleton and  $\fix T\neq \fady$.  
									\item 
									\label{thm:PR:i:b}
									$(\exists \overline{x}\in \fix T)$
									$(y_n)_\nnn$ converges strongly to $J_A\overline{x}$
									and $\zer(A+B)=\{J_A\overline{x}\}$.		
								\end{enumerate}	
								If, in addition, $B$ is uniformly monotone then we also have: 
								\begin{enumerate}	
									\setcounter{enumii}{2}
									\item 
									\label{thm:PR:iii:a}
									$T$ is strongly nonexpansive.
									\item 
									\label{thm:PR:iii:b}
									$(x_n)_\nnn$ converges weakly to $\overline{x}$.
								\end{enumerate}
								
								\item	
								\label{thm:PR:ii}
								Suppose  that 
								$A^{-1}$ is uniformly monotone. Then we  have: 
								\begin{enumerate}	
									\item 
									\label{thm:PR:ii:a}
									$T$ is a contraction for large distances and 
									$(\exists \overline{x}\in X)$ such that
									$\fix T=\{\overline{x}\}$.
									
									\item 
									\label{thm:PR:ii:b}
									$(x_n)_\nnn$ converges strongly to $\overline{x}$.
								\end{enumerate}	
							\end{enumerate}	
						\end{theorem}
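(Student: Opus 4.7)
The proof splits naturally along the items of the statement. For \cref{thm:PR:i}\cref{thm:PR:i:a}, I would first invoke uniform (hence strict) monotonicity of $A$ to force $A+B$ to have at most one zero, so the assumed nonemptiness makes $\zer(A+B)$ a singleton. Then I would verify the standard Peaceman--Rachford correspondence $\zer(A+B)=J_A(\fix T)$: writing $y=J_A x$, Minty's parametrization gives $x-y\in Ay$, while $x\in\fix T$ is equivalent to $J_B(2y-x)=y$, that is, $y-x\in By$; summing yields $0\in Ay+By$, and the converse is similarly direct by choosing $a\in Ay$ with $-a\in By$ and setting $x=y+a$. This transfers nonemptiness from $\zer(A+B)$ to $\fix T$.

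The heart of the argument is \cref{thm:PR:i}\cref{thm:PR:i:b}. Fixing any $\overline{x}\in\fix T$ and letting $\phi$ be the modulus of $A$, I would combine \cref{prop:um:sn:i}\cref{prop:um:sn:i:i} applied at $(x_n,\overline{x})$ with the nonexpansiveness of $R_B$ to obtain
\begin{equation}
\|x_n-\overline{x}\|^2-\|x_{n+1}-\overline{x}\|^2\ge 4\phi(\|y_n-J_A\overline{x}\|).
\end{equation}
Telescoping gives $\sum_{n}\phi(\|y_n-J_A\overline{x}\|)\le\tfrac{1}{4}\|x_0-\overline{x}\|^2<+\infty$, and since $\phi$ is increasing and vanishes only at $0$, this forces $y_n\to J_A\overline{x}$ in norm. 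Because $J_A\overline{x}\in\zer(A+B)$, part (a) identifies this limit as the unique zero.

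Under the extra hypothesis that $B$ is also uniformly monotone, \cref{prop:um:sne} gives that both $-R_A$ and $-R_B$ are super strongly nonexpansive, hence strongly nonexpansive by \cref{lem:une:sne}. Then \cref{thm:comp:m:sne} with $m=2$ and $J=\{1,2\}$, for which $(-1)^{|J|}=1$, shows that $T=R_BR_A$ is strongly nonexpansive, proving \cref{thm:PR:iii:a}. Weak convergence $x_n\weakly\overline{x}$ for some $\overline{x}\in\fix T$ is then immediate from \cref{fact:T:sne:converges}; choosing this same $\overline{x}$ in part (b) makes the two conclusions compatible.

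For \cref{thm:PR:ii}, the standing assumption combined with the added hypothesis yields that both $A$ and $A^{-1}$ are uniformly monotone, so \cref{thm-um-self-dual} gives that $R_A$ is a contraction for large distances. Invoking \cref{lem:comp:cont:large} with $\overline{m}=1$ propagates this to $T=R_BR_A$, and \cref{cor-cil-fp} then simultaneously delivers $\fix T=\{\overline{x}\}$ and strong convergence $x_n\to\overline{x}$, giving \cref{thm:PR:ii:a} and \cref{thm:PR:ii:b} at once. The main technical obstacle throughout is the quantitative telescoping estimate underpinning \cref{thm:PR:i:b}; the remaining items are careful bookkeeping on top of the compositional and self-dual machinery developed in earlier sections.
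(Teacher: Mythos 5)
Your proof is correct, and it tracks the paper's decomposition item by item; the interesting difference is in how you handle the first part. For \cref{thm:PR:i:a} and \cref{thm:PR:i:b} the paper simply cites the external references \cite[Proposition~26.1(iii)(b)]{BC2017} and \cite[Proposition~26.13]{BC2017}, whereas you prove both in-house: the correspondence $\zer(A+B)=J_A(\fix T)$ by direct Minty/resolvent bookkeeping, and then the strong convergence $y_n\to J_A\overline{x}$ by chaining \cref{prop:um:sn:i}\cref{prop:um:sn:i:i} at $(x_n,\overline{x})$ with the nonexpansiveness of $R_B$ to get the telescoping inequality
\begin{equation}
\|x_n-\overline{x}\|^2-\|x_{n+1}-\overline{x}\|^2\ge\|x_n-\overline{x}\|^2-\|R_Ax_n-R_A\overline{x}\|^2\ge 4\phi\bigl(\|y_n-J_A\overline{x}\|\bigr),
\end{equation}
summing, and invoking that $\phi$ is increasing and vanishes only at $0$. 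This is a genuine gain over the paper's treatment: it is self-contained within the machinery of \cref{sec:3}, it works for \emph{any} $\overline{x}\in\fix T$ (which is exactly the point you use to make \cref{thm:PR:i:b} compatible with the $\overline{x}$ produced by \cref{fact:T:sne:converges} in \cref{thm:PR:iii:b}), and as a by-product it yields the quantitative bound $\sum_{n}\phi(\|y_n-J_A\overline{x}\|)\le\tfrac14\|x_0-\overline{x}\|^2$, a summability estimate not visible in the citation. For \cref{thm:PR:iii:a} you invoke \cref{thm:comp:m:sne} with $m=2$, $J=\{1,2\}$ where the paper invokes \cref{prop:comp:sne}\cref{prop:comp:i} directly; these coincide. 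Your treatment of \cref{thm:PR:ii} matches the paper's (\cref{thm-um-self-dual}, then \cref{lem:comp:cont:large}, then \cref{cor-cil-fp}).
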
	
						
						\begin{proof}
							\cref{thm:PR:i:a}:
							Applying \cref{eq:zer:uniq} with $C$ replaced by $A+B$, we conclude that  
							$\zer(A+B) $ is a singleton.
							It follows from \cite[Proposition~26.1(iii)(b)]{BC2017} that $\fix T\neq \fady$. 
							\cref{thm:PR:i:b}:
							This is \cite[Proposition~26.13]{BC2017}.

							\cref{thm:PR:iii:a}: It follows from \cref{prop:um:sne} applied to $A$
							(respectively $B$)  that $-R_A$ and (respectively $-R_B$) is strongly nonexpansive.
							Consequently, $R_BR_A$ is strongly nonexpansive by 
							\cref{prop:comp:sne}\cref{prop:comp:i} 
							applied with $(T_1,T_2)$ replaced by $(R_A,R_B)$.	
							
							\cref{thm:PR:iii:b}: Combine 	\cref{thm:PR:iii:a} with \cref{fact:T:sne:converges}.
							
							\cref{thm:PR:ii:a}\&\cref{thm:PR:ii:b}: It follows from \cref{thm-um-self-dual} 
							that $R_A$ is a contraction for large distances. Combining this with 
							\cref{lem:comp:cont:large} applied with $(m,T_1,T_2) $ replaced 
							by $(2, R_A,R_B)$ we learn that $T$ is a contraction for large distances.
							Now combine this with \cref{cor-cil-fp}\cref{cor-cil-fp:i}\&\cref{cor-cil-fp:ii}.
						\end{proof}	
						
						The assumption that $B$ is uniformly monotone is critical in the conclusion of
						\cref{thm:PR}\cref{thm:PR:iii:b} as we illustrate below.
						\begin{example}
							\label{ex:no:conv}
							Suppose that $X\neq \{0\}$.
							Let $A=N_{\{0\}}$ and let $B\equiv 0$.
							Then $A$ is strongly monotone, hence uniformly monotone, and $B$ is \emph{not}
							uniformly monotone.
							Moreover, $R_A=-\Id $, $R_B=\Id$. Consequently $T= R_BR_A=-\Id$.
							Let $x_0\in X\smallsetminus \{0\}$.
							Then $(\forall \nnn)$ $T^n x_0=(-1)^nx_0$ and $(T^n x_0)_\nnn$
							does \emph{not} converge.
						\end{example}
						The assumption that $A^{-1}$ is uniformly monotone is critical in the conclusion of
						\cref{thm:PR}\cref{thm:PR:ii:b} as we illustrate below.
						\begin{example}
							\label{ex:weak-not-strong:conv}
							Suppose that $X=\ell^2(\{1,2,3,\dots\})$
							with the standard Schauder basis $e_1=(1,0,\ldots)$,
							$e_2=(0,1,0,\ldots)$, and so on.
							Let $A=N_{\{0\}}$, let 
							$R\colon  X\to X\colon (x_1,x_2,\ldots)\mapsto (0,x_1,x_2,\ldots)$
							(the right shift operator)
							and set $B=\tfrac{1}{2}(\Id-R)^{-1}-\Id$.
							Then $A$ is strongly monotone, hence uniformly monotone.
							Moreover, because $R_B=-R$ is nonexpansive, we conclude that
							$B$
							is maximally monotone by \cref{fact:corres}.  
							Observe that $A^{-1}\equiv 0$ is \emph{not} uniformly monotone.
							Moreover, $R_A=-\Id $, $R_B=-R$.
							Consequently $T= R_BR_A=R$.
							Let $x_0=e_1$.
							Then $(\forall \nnn)$ $T^n x_0=e_{n+1}$
							and $(T^n x_0)_\nnn=(e_1,e_2,\ldots)$
							converges weakly but \emph{not} strongly to $0$.
						\end{example}
						
						We now turn to Douglas--Rachford algorithm.
						We recall the following fact.
						\begin{fact}
							\label{fact:strong:comb}
							Let $T_1\colon X\to X$, let $T_2\colon X\to X$  and let $\lambda\in \left]0,1\right[$.
							Set $T=(1-\lambda) T_1+\lambda T_2$. Suppose that $T_1$ is strongly nonexpansive and 
							that $T_2 $ is nonexpansive. Then $T$  is strongly nonexpansive.
						\end{fact}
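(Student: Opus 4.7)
The plan is to verify the second part of the definition of strong nonexpansiveness for $T$ directly, bootstrapping the strong nonexpansiveness of $T_1$ and then handling the $T_2$-component via the Hilbert parallelogram-type identity for convex combinations. Nonexpansiveness of $T$ is immediate from the triangle inequality:
\[
\|Tx - Ty\| \le (1-\lambda)\|T_1 x - T_1 y\| + \lambda\|T_2 x - T_2 y\| \le \|x - y\|,
\]
so the substance lies in the asymptotic implication.

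Fix sequences $(x_n)_\nnn,(y_n)_\nnn$ in $X$ with $(x_n - y_n)_\nnn$ bounded and $\|x_n - y_n\| - \|Tx_n - Ty_n\| \to 0$, and abbreviate $u_n = x_n - y_n$, $v_n = T_1 x_n - T_1 y_n$, $w_n = T_2 x_n - T_2 y_n$, so that $Tx_n - Ty_n = (1-\lambda)v_n + \lambda w_n$. Using nonexpansiveness of $T_1$ and $T_2$ together with the triangle inequality,
\[
0 \le (1-\lambda)(\|u_n\| - \|v_n\|) + \lambda(\|u_n\| - \|w_n\|) = \|u_n\| - \bigl((1-\lambda)\|v_n\| + \lambda\|w_n\|\bigr) \le \|u_n\| - \|Tx_n - Ty_n\| \to 0.
\]
Since both summands on the left are nonnegative, each tends to $0$; in particular $\|u_n\| - \|v_n\| \to 0$, and the strong nonexpansiveness of $T_1$ then yields $u_n - v_n \to 0$.

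Because $u_n - (Tx_n - Ty_n) = (1-\lambda)(u_n - v_n) + \lambda(u_n - w_n)$, the proof reduces to showing $u_n - w_n \to 0$; this is the main obstacle, as $T_2$ is assumed only nonexpansive and the corresponding norm-gap information must be extracted from the combined hypothesis. To surmount it, I would invoke the Hilbert space identity
\[
\lambda(1-\lambda)\|u_n - w_n\|^2 = (1-\lambda)\|u_n\|^2 + \lambda\|w_n\|^2 - \|(1-\lambda)u_n + \lambda w_n\|^2
\]
and show that the right-hand side tends to $0$. The first two terms equal $\|u_n\|^2 + \lambda(\|w_n\|^2 - \|u_n\|^2)$, whose second summand vanishes since $\|u_n\| - \|w_n\| \to 0$ and $(\|u_n\| + \|w_n\|)_\nnn$ is bounded. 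For the third term, $(1-\lambda)u_n + \lambda w_n$ differs from $(1-\lambda)v_n + \lambda w_n = Tx_n - Ty_n$ by $(1-\lambda)(u_n - v_n) \to 0$, so its norm differs from $\|Tx_n - Ty_n\|$ by a vanishing quantity and hence from $\|u_n\|$ as well; passing from norms to squared norms via boundedness then gives $\|u_n\|^2 - \|(1-\lambda)u_n + \lambda w_n\|^2 \to 0$. Combining these estimates yields $\|u_n - w_n\|^2 \to 0$, which completes the proof.
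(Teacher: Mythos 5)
Your proof is correct. The paper itself gives no argument for this fact; it simply cites Bruck--Reich \cite[Proposition~1.3]{BR77}, whose proof is formulated for uniformly convex Banach spaces and therefore proceeds through the modulus of convexity rather than through an exact identity. Your approach is a genuinely Hilbert-specific replacement: after peeling off the $T_1$-piece (getting $\|u_n\|-\|v_n\|\to 0$ and $\|u_n\|-\|w_n\|\to 0$ from the nonnegativity of the two weighted gaps, then $u_n-v_n\to 0$ from strong nonexpansiveness of $T_1$), you handle the $T_2$-piece by the exact convexity identity
\[
\lambda(1-\lambda)\|u_n-w_n\|^2=(1-\lambda)\|u_n\|^2+\lambda\|w_n\|^2-\|(1-\lambda)u_n+\lambda w_n\|^2,
\]
and you correctly observe that $(1-\lambda)u_n+\lambda w_n$ differs from $Tx_n-Ty_n$ by the vanishing term $(1-\lambda)(u_n-v_n)$, so its norm tracks $\|Tx_n-Ty_n\|\to\|u_n\|$; boundedness then lets you pass to squares. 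Every step checks out. What the Bruck--Reich route buys is generality (uniformly convex Banach spaces, where only a qualitative modulus is available); what your route buys is a short, quantitative, self-contained argument in the Hilbert setting the paper actually works in. Given that the paper leaves this as a citation, your proof is a legitimate and arguably preferable inclusion for a Hilbert-space-only exposition.
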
	
						\begin{proof}
							See \cite[Proposition~1.3]{BR77}.
						\end{proof}	
						
						\begin{proposition}
							\label{prop:DR:cont}
							Let $R\colon X\to X$ and let $\lambda\in \left]0,1\right[$.
							Set $T=(1-\lambda) \Id+\lambda R$.
							Suppose that $-R$ is strongly nonexpansive.
							Then $T$ is a contraction for large distances.
						\end{proposition}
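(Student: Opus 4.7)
The plan is to reduce the claim to the implication \cref{new:pi}\cref{new:pi:ii}$\Rightarrow$\cref{new:pi:iii}, which asserts that a nonexpansive self-map is a contraction for large distances whenever both it and its negative are strongly nonexpansive. Thus it suffices to establish that $T$ and $-T$ are each strongly nonexpansive, and then invoke that implication.

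I will first verify that $T$ is strongly nonexpansive. Since $-R$ is strongly nonexpansive it is in particular nonexpansive, hence $R$ is nonexpansive as well. Moreover, $\Id$ is strongly nonexpansive because the implication in \cref{eq:def:sn} is vacuously satisfied: $(x_n-y_n)-(\Id x_n-\Id y_n)$ is identically zero. Applying \cref{fact:strong:comb} with $T_1=\Id$ (strongly nonexpansive), $T_2=R$ (nonexpansive), and parameter $\lambda$ shows that $T=(1-\lambda)\Id+\lambda R$ is strongly nonexpansive.

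Next I will verify that $-T$ is strongly nonexpansive. The main bookkeeping point is to re-index the convex combination so that $-R$ occupies the $T_1$ slot required by \cref{fact:strong:comb}. Setting $\mu:=1-\lambda\in\left]0,1\right[$, we may rewrite
\begin{equation*}
-T=(1-\lambda)(-\Id)+\lambda(-R)=(1-\mu)(-R)+\mu(-\Id).
\end{equation*}
Applying \cref{fact:strong:comb} with $T_1=-R$ (strongly nonexpansive by hypothesis), $T_2=-\Id$ (an isometry, hence nonexpansive), and parameter $\mu$ then yields that $-T$ is strongly nonexpansive. Combining the two steps, \cref{new:pi}\cref{new:pi:ii}$\Rightarrow$\cref{new:pi:iii} delivers the conclusion. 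No substantive obstacle arises; the only delicate point is the reordering of the convex combination so that the strongly nonexpansive summand lands in the $T_1$ slot, since $-\Id$ itself is not strongly nonexpansive (unless $X=\{0\}$).
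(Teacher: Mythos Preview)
Your proof is correct and follows essentially the same route as the paper: both apply \cref{fact:strong:comb} twice---once with $(T_1,T_2)=(\Id,R)$ to get $T$ strongly nonexpansive, and once with $(T_1,T_2)=(-R,-\Id)$ (after re-indexing the convex coefficients) to get $-T$ strongly nonexpansive---and then invoke \cref{new:pi}\cref{new:pi:ii}$\Rightarrow$\cref{new:pi:iii}. Your explicit remark that $-\Id$ is not strongly nonexpansive, which forces the reordering so that $-R$ occupies the $T_1$ slot, is a helpful clarification that the paper leaves implicit.
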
	
						
						\begin{proof}
							Clearly $R$ is nonexpansive.
							Observe that \cref{fact:strong:comb} applied with $(T_1,T_2)$
							replaced by $(\Id, R)$ implies that $T$ is strongly nonexpansive.
							We claim that $-T$ is   strongly nonexpansive.
							Indeed, applying \cref{fact:strong:comb} with $(T_1,T_2,\lambda)$
							replaced by $( -R,-\Id,1-\lambda)$ implies that $-T=(1-\lambda)(-R)+\lambda (-\Id)$
							is   strongly nonexpansive.
							Altogether, we conclude that $T$ is a contraction for large distances in view of
							\cref{new:pi}.
						\end{proof}
						
						\begin{theorem}[{\bf Douglas--Rachford algorithm}]
							\label{thm:DR}
							Suppose that $\zer(A+B)\neq \fady$.
							Suppose that $A$ is uniformly monotone. 
							Set $T=\tfrac{1}{2}(\Id+R_BR_A)$.
							Let $x_0\in X$ and set $(\forall \nnn)$:
							\begin{subequations}
								\begin{align}
									x_{n+1}&=Tx_n,
									\\
									y_{n}&=J_A x_n.
								\end{align}
							\end{subequations}	
							Then $\zer(A+B)$ is a singleton, $\fix T\neq \fady$ and $(\exists \overline{x}\in \fix T)$ 
							such that the following hold:
							\begin{enumerate}
								
								\item
								\label{thm:DR:i}
								$(y_n)_\nnn$ converges strongly to 
								$\overline{y}\coloneqq J_A\overline{x}$.			
								\item	
								\label{thm:DR:ii}
								Suppose  that $(\exists C\in \{A^{-1},B^{-1}\})$
								such that $C$ is uniformly monotone.
								
								Then we additionally have: 
								\begin{enumerate}	
									\item 
									\label{thm:DR:iii:a}
									$T$ is a contraction for large distances  and 
									$(\exists \overline{x}\in X)$ such that
									$\fix T=\{\overline{x}\}$.
									\item 
									\label{thm:DR:iii:b}
									$(x_n)_\nnn$ converges strongly to $\overline{x}$.
								\end{enumerate}

							\end{enumerate}	
						\end{theorem}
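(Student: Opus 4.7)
First, I would establish uniqueness of $\zer(A+B)$, nonemptiness of $\fix T$, and weak convergence of $(x_n)$. Since $A$ is uniformly monotone and $B$ monotone, $A+B$ is uniformly monotone; combined with the hypothesis $\zer(A+B)\neq\fady$, \cref{eq:zer:uniq} applied with $C=A+B$ shows that $\zer(A+B)$ is a singleton. The standard correspondence between zeros of $A+B$ and fixed points of $T$ (\cite[Proposition~26.1]{BC2017}) then gives $\fix T\neq\fady$. Since $R_BR_A$ is nonexpansive, $T$ is firmly nonexpansive, so $(x_n)$ is Fej\'er monotone with respect to $\fix T$, is bounded, satisfies $\|x_n-Tx_n\|\to 0$, and converges weakly to some $\overline{x}\in\fix T$. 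Set $\overline{y}:=J_A\overline{x}$; then $\overline{y}$ is the unique element of $\zer(A+B)$.

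For part~\cref{thm:DR:i}, I would introduce the auxiliary sequence $z_n := J_B R_A x_n$. The identity $Tx_n = x_n + z_n - y_n$ yields $y_n - z_n = x_n - Tx_n \to 0$. Minty's parametrization \cref{eq:Minty} furnishes $(y_n,\,x_n-y_n)\in\gra A$ and $(z_n,\,2y_n-x_n-z_n)\in\gra B$, while $u:=\overline{x}-\overline{y}$ satisfies $u\in A\overline{y}$ and $-u\in B\overline{y}$. Let $\phi$ be a modulus for $A$. Applying uniform monotonicity of $A$ to the first pair and monotonicity of $B$ to the second, and summing, yields
\begin{equation*}
\phi(\|y_n-\overline{y}\|) \le \langle y_n - \overline{y},\,(x_n-y_n)-u\rangle + \langle z_n-\overline{y},\,(2y_n-x_n-z_n)+u\rangle.
\end{equation*}
A direct expansion collapses the right-hand side into $\langle y_n-z_n,\,x_n+z_n-y_n-\overline{x}\rangle$, which tends to $0$ since $y_n-z_n\to 0$ while the three sequences $(x_n)$, $(y_n)$, $(z_n)$ are bounded (the first by Fej\'er monotonicity, the other two by nonexpansiveness of $J_A$ and $J_B$). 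Since $\phi$ is increasing and vanishes only at $0$, $y_n\to\overline{y}$ in norm.

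For part~\cref{thm:DR:ii}, the plan is to show that $T$ is a contraction for large distances; \cref{cor-cil-fp} will then force $\fix T=\{\overline{x}\}$ and deliver $x_n\to\overline{x}$ in norm. I would split into two cases. If $A^{-1}$ is uniformly monotone, then $A$ and $A^{-1}$ are both uniformly monotone, and \cref{thm-um-self-dual} makes $R_A$ a contraction for large distances; \cref{lem:comp:cont:large} propagates this to $R_BR_A$, and averaging with $\Id$ preserves the property via a short triangle-inequality estimate. If $B^{-1}$ is uniformly monotone, then \cref{prop:um:sne} applied to $B^{-1}$ shows that $R_B=-R_{B^{-1}}$ is \ssnonex\ (hence strongly nonexpansive), while $-R_A$ is strongly nonexpansive by uniform monotonicity of $A$; \cref{prop:comp:sne}\cref{prop:comp:ii} then gives that $-R_BR_A$ is strongly nonexpansive, and \cref{prop:DR:cont} finally delivers that $T$ is a contraction for large distances.

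The main obstacle is the algebraic identity at the heart of part~\cref{thm:DR:i}: collapsing the sum of the two monotonicity inner products into a single $\langle y_n - z_n,\,\cdot\rangle$ form, so that the vanishing residual $y_n-z_n\to 0$ can be exploited against the boundedness of the other factor. Once this identity is secured, the remainder is routine: either standard Fej\'er-monotonicity analysis for firmly nonexpansive iterations or a direct appeal to the duality result \cref{thm-um-self-dual} and the compositional lemmas of \cref{sec:7}.
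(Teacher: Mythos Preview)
Your proposal is correct. Part~\cref{thm:DR:ii} follows the paper's proof essentially verbatim: the same case split on $C\in\{A^{-1},B^{-1}\}$, the same appeals to \cref{thm-um-self-dual}, \cref{lem:comp:cont:large}, \cref{prop:um:sne}, \cref{prop:comp:sne}\cref{prop:comp:ii}, \cref{prop:DR:cont}, and \cref{cor-cil-fp}.

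Part~\cref{thm:DR:i} is where you genuinely diverge. The paper disposes of it in one line by citing \cite[Proposition~26.11(vi)(b)]{BC2017}, whereas you supply a self-contained argument: introduce $z_n=J_BR_Ax_n$, obtain the residual identity $y_n-z_n=x_n-Tx_n\to 0$ from asymptotic regularity of the firmly nonexpansive iteration, sum the uniform-monotonicity inequality for $A$ with the monotonicity inequality for $B$, and collapse the result (your algebra is correct) to $\langle y_n-z_n,\,Tx_n-\overline{x}\rangle\to 0$, forcing $\phi(\|y_n-\overline{y}\|)\to 0$. This is precisely the computation underlying the cited proposition, so what you gain is independence from that external reference; what the paper gains is brevity. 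Your preliminary weak-convergence claim for $(x_n)_\nnn$ is harmless but unnecessary: part~\cref{thm:DR:i} uses only asymptotic regularity and the fact that $J_A\overline{x}$ coincides for every $\overline{x}\in\fix T$ (since $\zer(A+B)$ is a singleton), while part~\cref{thm:DR:ii} bypasses it entirely via \cref{cor-cil-fp}.
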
	
						
						\begin{proof}
							Applying \cref{eq:zer:uniq} with $C$ replaced by $A+B$, we conclude that  
							$\zer(A+B)\neq \fady$ is a singleton.
							It follows from \cite[Proposition~26.1(iii)(b)]{BC2017} that $\fix T\neq \fady$. 
							\cref{thm:DR:i}:
							This is \cite[Proposition~26.11(vi)(b)]{BC2017}.
							
							\cref{thm:PR:ii:a}\&\cref{thm:PR:ii:b}:
							Suppose that $C=A^{-1}$. It follows from \cref{thm-um-self-dual} 
							that $R_A$ is a contraction for large distances. 
							Consequently, $T $ is  a contraction for large distances.
							Now combine with \cref{cor-cil-fp}\cref{cor-cil-fp:i}\&\cref{cor-cil-fp:ii}.
							
							Now suppose that $C=B^{-1}$. It follows from 
							\cref{prop:um:sne} applied to $A$
							(respectively $B^{-1}$)  that $-R_A$ and 
							(respectively $R_B=-R_{B^{-1}}$) is strongly nonexpansive.
							Consequently, $-R_BR_A$ is strongly nonexpansive by 
							\cref{prop:comp:sne}\cref{prop:comp:i} 
							applied with $(T_1,T_2)$ replaced by $(R_A,R_B)$.	
							Now combine with \cref{prop:DR:cont} applied with 
							$(\lambda,R)$ replaced by $(\tfrac{1}{2}, R_BR_A)$.
							This proves \cref{thm:PR:iii:a}. To show 
							\cref{thm:PR:iii:b},  combine 
							\cref{thm:DR:iii:a} with \cref{fact:T:sne:converges}.
						\end{proof}	
						
						We conclude this section with an application to
						the forward-backward algorithm.
						\begin{theorem}[{\bf Forward-backward algorithm}]
							\label{thm:FB}
							Let $\beta >0$.
							Suppose that $A$ is $\beta$-cocoercive
							and  that $B$ is uniformly monotone.
							Let $\gamma\in \left]0,2\beta\right[$.
							Set $T=J_{\gamma B}(\Id-\gamma A)$. Let $x_0\in X$
							and set $(\forall \nnn)$:
							\begin{equation}
								x_{n+1}=Tx_n.
							\end{equation}	
							Then the following hold:
							\begin{enumerate}
								\item 
								\label{thm:FB:i}	
								$\zer(A+B)$ is a singleton 
								\item
								\label{thm:FB:ii}	
								$T$  is a contraction for large distances.
								\item
								\label{thm:FB:iii}	
								$(x_n)_\nnn$ converges strongly
								to the unique point in $\zer(A+B)$.
							\end{enumerate} 
						\end{theorem}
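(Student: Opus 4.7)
The plan is to assemble the three parts by stitching together the earlier tools: uniform monotonicity of $A+B$, the contraction-for-large-distances behavior of $J_{\gamma B}$, and the stability of this class under composition with a nonexpansive factor.

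For \cref{thm:FB:i}, I would first observe that $\beta$-cocoercivity of $A$ implies $A$ is single-valued with full domain and (maximally) monotone, so $A+B$ is maximally monotone by a standard domain-condition sum result (e.g., \cite[Corollary~25.5]{BC2017}). Since $B$ is uniformly monotone and $A$ is monotone, the sum $A+B$ is uniformly monotone with the same modulus as $B$. Then \cref{thm-umg}\cref{thm-umg:ii}\&\cref{thm-umg:iii} applied to $A+B$ yield that $\zer(A+B)$ is a singleton.

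For \cref{thm:FB:ii}, I would handle each factor of $T=J_{\gamma B}(\Id-\gamma A)$ separately. Since $A$ is $\beta$-cocoercive and $\gamma\in\left]0,2\beta\right[$, the operator $\Id-\gamma A$ is nonexpansive (in fact $\tfrac{\gamma}{2\beta}$-averaged, see \cite[Proposition~4.39]{BC2017}). On the other hand, $\gamma B$ is uniformly monotone (same modulus as $B$ scaled by $\gamma$), so \cref{lem-inv-res}\cref{lem-inv-res:c} gives that $J_{\gamma B}$ is a contraction for large distances. Now \cref{lem:comp:cont:large} applied with $(m,T_1,T_2)$ replaced by $(2,\Id-\gamma A,J_{\gamma B})$ — using $\overline{m}=2$ — yields that $T$ is a contraction for large distances.

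For \cref{thm:FB:iii}, \cref{cor-cil-fp} applied to $T$ provides a unique fixed point $\overline{x}\in X$ with $x_n\to \overline{x}$ in norm. It remains to identify $\overline{x}$ with the unique zero of $A+B$; this is the standard identity $\fix T=\zer(A+B)$ valid for the forward-backward operator (see, e.g., \cite[Proposition~26.1(iv)]{BC2017}): indeed, $x=J_{\gamma B}(x-\gamma Ax)\siff x-\gamma Ax\in x+\gamma Bx\siff 0\in Ax+Bx$.

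I don't anticipate a genuine obstacle here: every ingredient has already been developed. The only mild subtlety is making sure that the composition lemma \cref{lem:comp:cont:large} is applied correctly (one factor is the contraction for large distances, the other is merely nonexpansive), which is exactly the setting the lemma was designed for.
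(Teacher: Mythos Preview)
Your proposal is correct and follows essentially the same route as the paper: maximal and uniform monotonicity of $A+B$ for \cref{thm:FB:i}, nonexpansiveness of $\Id-\gamma A$ together with \cref{lem-inv-res}\cref{lem-inv-res:c} (applied to $\gamma B$) and \cref{lem:comp:cont:large} for \cref{thm:FB:ii}, and \cref{cor-cil-fp} for \cref{thm:FB:iii}. Your explicit identification $\fix T=\zer(A+B)$ is a welcome bit of extra care that the paper leaves implicit.
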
	
						
						\begin{proof}
							\cref{thm:FB:i}:
							Observe that $A+B$ is maximally monotone by, e.g.,
							\cite[Corollary~25.5(i)]{BC2017},
							and uniformly monotone.
							Now combine this with \cref{thm-umg}\cref{thm-umg:iii} 
							applied with $A$ replaced by $A+B$.
							
							\cref{thm:FB:ii}:	
							Observe that $\Id-\gamma A$ is $\gamma /(2\beta)$
							averaged, hence nonexpansive.
							Now combine this with
							\cref{lem-inv-res}\cref{lem-inv-res:c} 
							( applied with $A$
							replaced by $\gamma A$)
							and \cref{lem:comp:cont:large} applied with $(m,T_1,T_2) $ replaced 
							by $(2, \Id-\gamma A,J_{\gamma B})$.
							
							\cref{thm:FB:iii}:
							Combine \cref{thm:FB:ii}
							and \cref{cor-cil-fp}\cref{cor-cil-fp:ii}. 
						\end{proof}

						\small 
						
						\section*{Acknowledgements}
						
						The research of WMM was partially supported by 
						the Natural Sciences and Engineering Research Council of
						Canada Discovery Grant.

						\begin{appendices}			
							
							\crefalias{section}{appsec}
							\section{}
							\label{App:B}
							\begin{myproof}[Finer conclusions for \cref{ex-cont-in-large-b}.]
								Let $g$ be  the inverse function of the function $x\mapsto x+\sin x$ 
								over the interval $\left]\tfrac{-\pi-2}{2},\tfrac{\pi+2}{2}\right[$
								and let $h$ be  the inverse function of the function $x\mapsto x-\sin x$ 
								over the interval $\left]\tfrac{-\pi-2}{2},\tfrac{\pi+2}{2}\right[$.
								Set 
								\begin{equation}
									A\colon \RR\to \RR\colon x\mapsto 	
									\begin{cases}
										x+1,&x\le \tfrac{-\pi-2}{4};
										\\
										g(2x)-x,&\abs{x}<\tfrac{\pi+2}{4};
										\\
										x-1, &\text{otherwise},
									\end{cases}	
								\end{equation}	
								and set
								\begin{equation}
									f\colon \RR\to \RR\colon x\mapsto 	
									\frac{1}{2}\begin{cases}
										x^2+2x,&x\le \tfrac{-\pi-2}{4};
										\\
										2xg(2x)-\tfrac{1}{2}(g(2x))^2+\cos(g(2x))-x^2-\tfrac{\pi+1}{2},&\abs{x}<\tfrac{\pi+2}{4};
										\\
										x^2-2x, &\text{otherwise}.
									\end{cases}	
								\end{equation}	
								
								Then 
								\begin{equation}
									\label{ex-cont-in-large-b:i} 	
									T=R_A,
								\end{equation}	
								\begin{equation}
									\label{ex-cont-in-large-b:ii}
									A=f' ,
								\end{equation}	
								\begin{equation}
									\label{ex-cont-in-large-b:e1} 	
									A^{-1}\colon \RR\to \RR\colon x\mapsto 	
									\begin{cases}
										x-1,&x\le \tfrac{-\pi+2}{4};
										\\
										h(2x)-x,&\abs{x}<\tfrac{\pi-2}{4};
										\\
										x+1, &\text{otherwise},
									\end{cases}	
								\end{equation}	
								and
								\begin{equation}
									\label{ex-cont-in-large-b:e2} 	
									f^*\colon \RR\to \RR\colon x\mapsto 	
									\frac{1}{2}\begin{cases}
										x^2-2x,&x\le \tfrac{-\pi+2}{4};
										\\
										2xh(2x)-\tfrac{1}{2}(h(2x))^2-\cos(g(2x))-x^2+\tfrac{\pi-1}{2},&\abs{x}<\tfrac{\pi-2}{4};
										\\
										x^2+2x, &\text{otherwise}.
									\end{cases}	
								\end{equation}	
								
								Observe that $T=R_A$ if and only if $A=((\Id+T)/2)^{-1}-\Id$.
								Now
								\begin{equation}
									\frac{1}{2}(\Id+T)\colon \RR\to \RR\colon x\mapsto 	
									\frac{1}{2}\begin{cases}
										x-1,&x\le- \tfrac{\pi}{2};
										\\
										x+\sin x,&\abs{x}<\tfrac{\pi}{2};
										\\
										x+1, &\text{otherwise}.
									\end{cases}	
								\end{equation}	
								Note that $	\tfrac{1}{2}(\Id+T)\colon \RR\to \RR$ 
								is strictly increasing and differentiable, hence continuous.
								Therefore,  $(\tfrac{1}{2}(\Id+T))^{-1}\colon \RR\to \RR$ 
								is strictly increasing and continuous.
								To this end let $(x,y)\in \RR\times \RR$. 
								Then $y=(	\tfrac{1}{2}(\Id+T))^{-1}(x)$
								if and only if $x=\tfrac{1}{2}(y+Ty)$.
								If $y\le -\tfrac{\pi}{2}$ then $x=\tfrac{1}{2}(y-1)$. Hence, $y=2x+1$ and
								$x\le \tfrac{-\pi-2}{4}$.		
								Similarly, if $y\ge \tfrac{\pi}{2}$ then $x=\tfrac{1}{2}(y+1)$. 
								Hence, $y=2x-1$ and
								$x\ge \tfrac{\pi+2}{4}$. If $y<\abs{\tfrac{\pi}{2}}$ 
								then $x=\tfrac{1}{2}(y+\sin y)$; equivalently,
								$y=g(2x)$ and $\abs{x}\le \tfrac{\pi+2}{4}$. 		
								This proves 
								\cref{ex-cont-in-large-b:i}.
								
								We now show that $f$ is an antiderivative of $A$.
								The formula for $f$ over the intervals $\left[\tfrac{\pi+2}{4},+\infty\right[$
								and $\left]-\infty, -\tfrac{\pi+2}{4}\right]$ is straightforward.
								To compute an antidrivative of $g(2x)-x$
								we use \cite{Parker55} to learn that 
								\begin{equation}
									\int (g(2x)-x)dx=\tfrac{1}{2}\Big(2xg(2x)-\tfrac{(g(x))^2}{2}+\cos(g(2x))-x^2\Big)+C.
								\end{equation}	
								The continuity of $A$ implies that  
								$g(\tfrac{\pi+2}{2})=\tfrac{\pi}{2}$ and $g(-\tfrac{\pi+2}{2})=-\tfrac{\pi}{2}$.
								This, together with the continuity of $f$, imply that $C=-\tfrac{\pi+1}{4}$.
								This proves	\cref{ex-cont-in-large-b:ii}.
								
								To prove 	\cref{ex-cont-in-large-b:e1} proceed similar to the proof of 
								\cref{ex-cont-in-large-b:i} and observe that $R_{A^{-1}}=-T$. 
								Analogously, the proof of \cref{ex-cont-in-large-b:e2}  
								is similar to that of \cref{ex-cont-in-large-b:ii}
								by observing that $A^{-1}=(f^*)'$. 								
								\begin{figure}
									\begin{center}
										\begin{tabular}{cc}
											\includegraphics[scale=0.45]{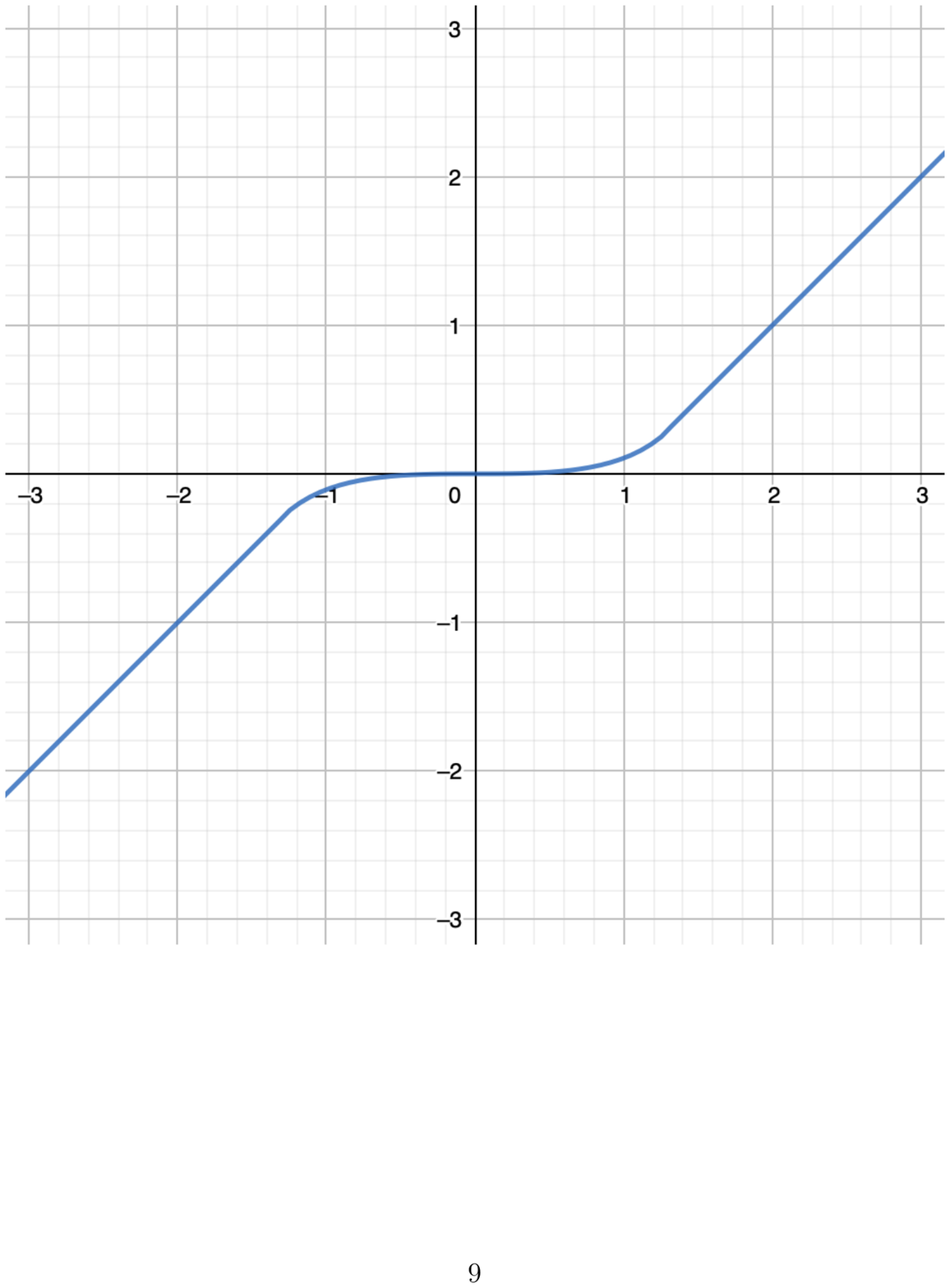}&	\includegraphics[scale=0.45]{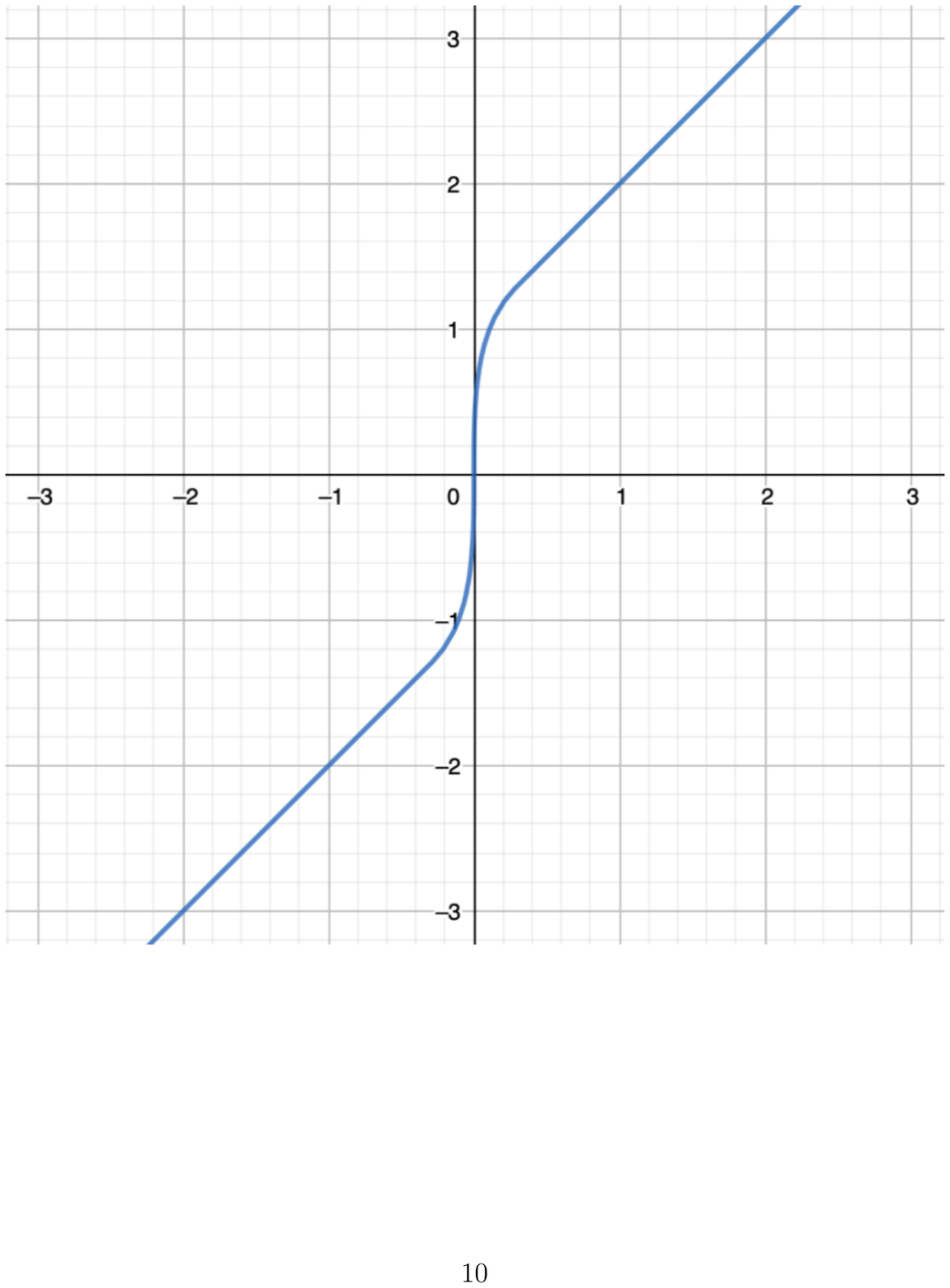}
										\end{tabular}
									\end{center}	
									\caption{A \texttt{GeoGebra} snapshot illustrating \cref{ex-cont-in-large-b}. Left: plot of $A(x)$. 
										Right: plot of $A^{-1}(x)$.}
								\end{figure}		
								\begin{figure}
									\begin{center}
										\begin{tabular}{cc}
											\includegraphics[scale=0.45]{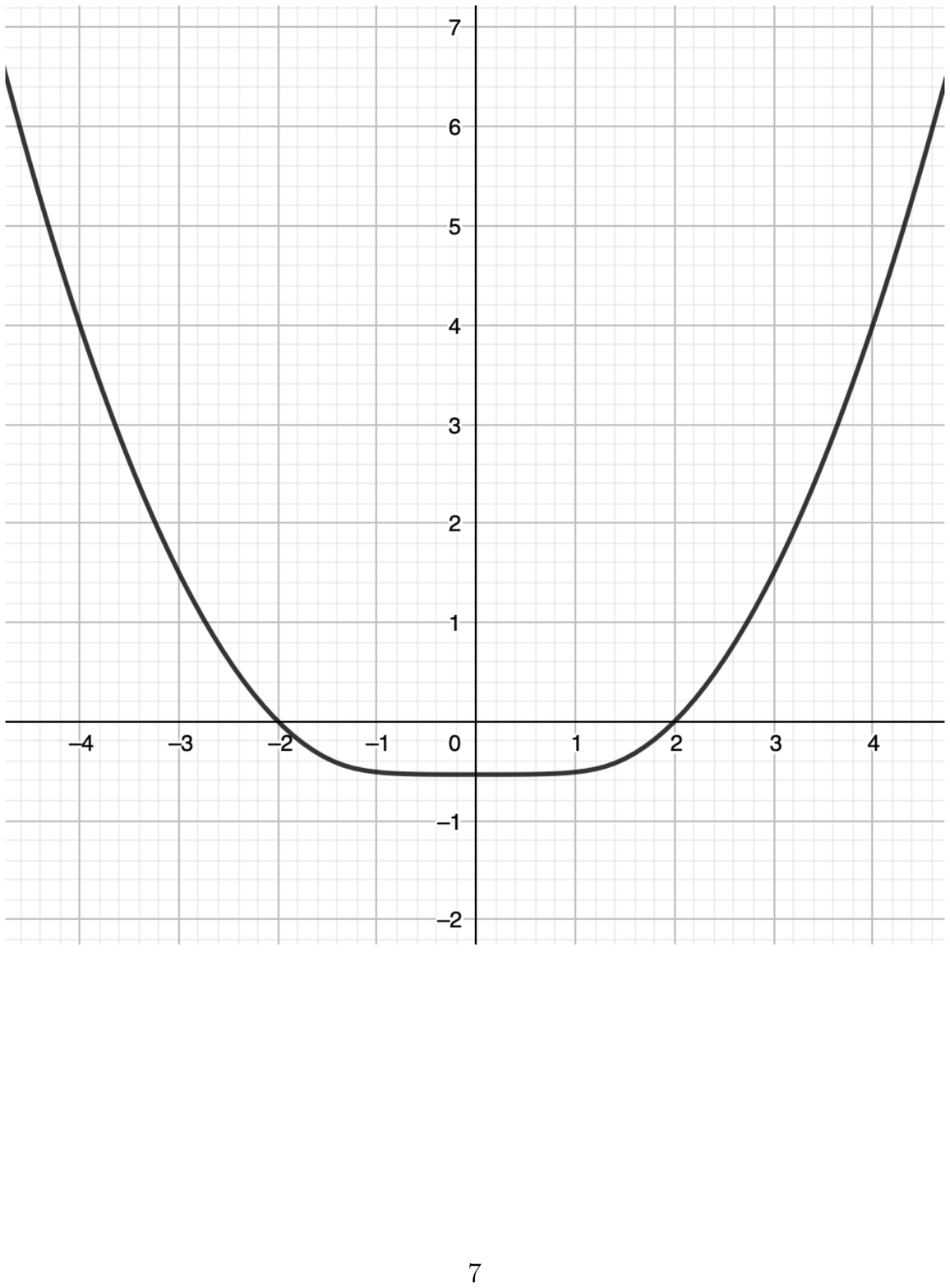}&	\includegraphics[scale=0.45]{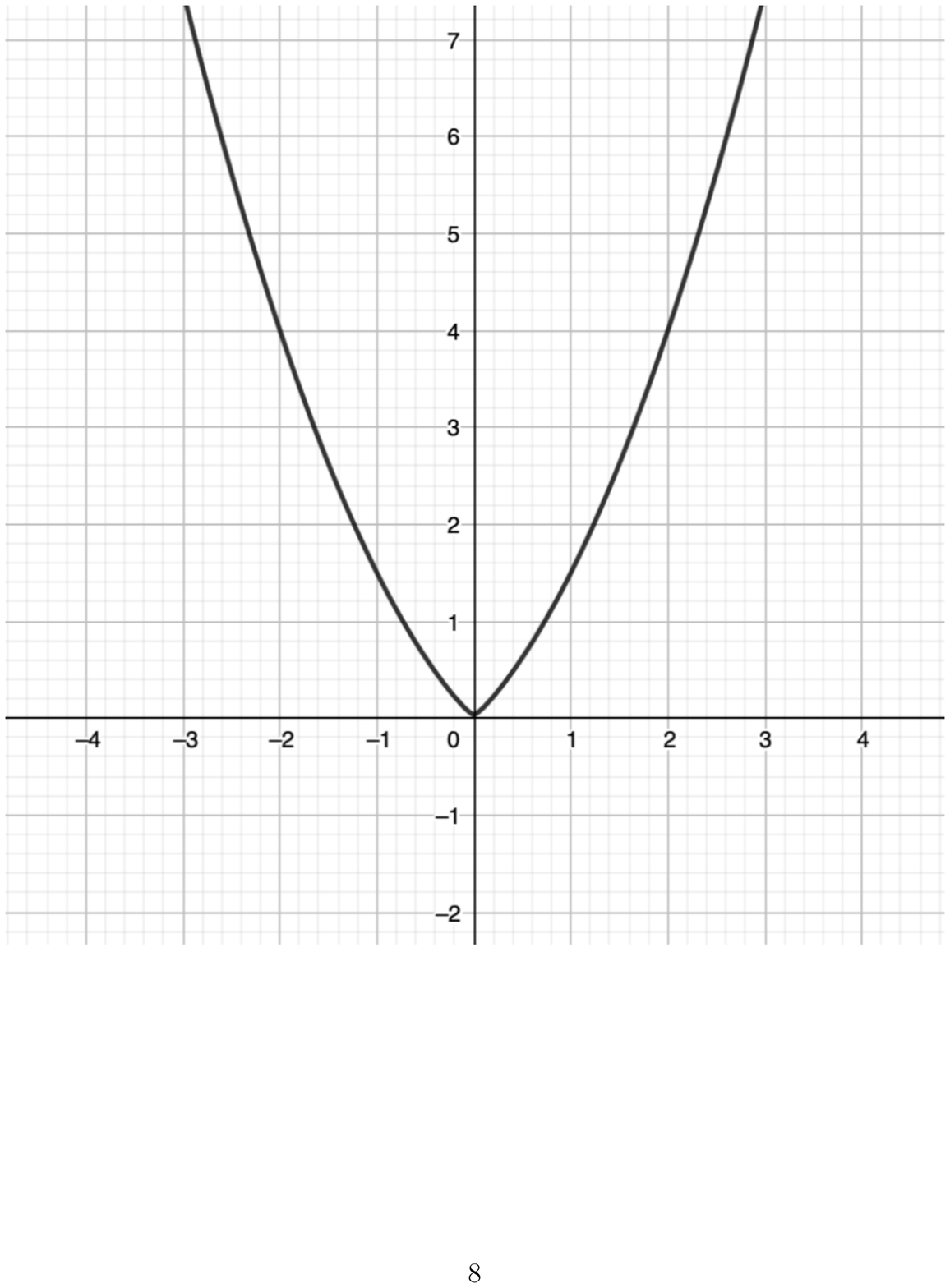}
										\end{tabular}
									\end{center}	
									\caption{A \texttt{GeoGebra} snapshot illustrating \cref{ex-cont-in-large-b}. Left: plot of $f$. 
										Right: plot of $f^*$.}
								\end{figure}
							\end{myproof}
							
							%
							
						\end{appendices}
						

\begin{thebibliography}{999}
							
							\small 
							
							\seppthree
							
							\bibitem{AlbGue97}
							Ya. I.\ Alber and S.\ Guerre-Delabriere,
							Principle of weakly contractive maps in Hilbert spaces,
							New Results in Operator Theory and its Applications,
							\emph{Operator Theory: Advances and Applications}~98 (1997), 7--22. 
							
						
							
							\bibitem{BC2017}
							H.H.\ Bauschke and P.L.\ Combettes,
							\emph{Convex Analysis and Monotone Operator Theory in Hilbert Spaces},
							2nd edition, Springer, 2017.
							%
							
							\bibitem{BMW12} H.H.\ Bauschke, S.\ Moffat, and X.\ Wang, 
							Firmly nonexpansive
							mappings and maximally monotone operators: 
							correspondence and duality, 
							{\em Set-valued and Variational Analysis}~{20} (2012), 131--153.
							
							\bibitem{BMW2021} H.H.\  Bauschke, W.M.\ Moursi, and X.\ Wang, 	
							Generalized monotone operators and their averaged resolvents, 
							{\em Mathematical Programming (Series B)}~{189} (2021) 55--74.
							
							
							\bibitem{BMW2012}
							H.H.\ Bauschke, S.M.\ Moffat, and X.\ Wang,
							Firmly nonexpansive mappings and maximally monotone operators:
							correspondence and duality, 
							{\em Set-valued and Variational Analysis}~{20} (2012), 131--153.
							
							\bibitem{Beck2017}
							A.\ Beck, \emph{First-Order Methods in Optimization}, SIAM 2017.
							\url{https://doi.org/10.1137/1.9781611974997}
							
							\bibitem{BL2000}
							Y.\ Benyamini, J.\ Lindenstrauss,
							\emph{Geometric Nonlinear Functional Analysis~1}, 
							{American Mathematical Society Colloquium Publications~48},
							 {American Mathematical Society, Providence, RI},
							{2000}
							
							\bibitem{BV10} J.M.\ Borwein and J.\ Vanderwerff, 
							Constructions of uniformly convex functions, 
							{\em Canadian Mathematical  Bulletin}~{55} (2012), 697--707.
							
							\bibitem{BV12} J.M.\ Borwein and J.\ Vanderwerff, 
							{\em Convex Functions: Constructions, Characterizations and Counterexamples}, 
							Encyclopedia of Mathematica and Its Applications~ 109, 
							Cambridge University Press, 2010.
							\bibitem{Borwein50}
							J.M.\ Borwein,
							Fifty years of maximal monotonicity,
							\emph{Optimization Letters}~4 (2010), 473--490.
							\bibitem{Brezis}
							H.\ Brezis,
							\emph{Op\'erateurs Maximaux Monotones et
								Semi-Groupes de Contractions dans les Espaces de Hilbert},
							North-Holland/Elsevier, 1973. 
							\bibitem{BR77}
							R.E.\ Bruck and S.\ Reich, 
							Nonexpansive projections and resolvents of accretive operators in
							Banach spaces, \emph{Houston Journal of Mathematics}~3 (1977), 459--470.
							\bibitem{BurIus}
							R.S.\ Burachik and A.N.\ Iusem,
							\emph{Set-Valued Mappings and Enlargements
								of Monotone Operators},
							Springer-Verlag, 2008.
							
							
							\bibitem{EckBer}
							J.\ Eckstein and D.P.\ Bertsekas,
							On the Douglas--Rachford splitting method and the proximal
							point algorithm for maximal monotone operators,
							\emph{Mathematical Programming (Series A)}~55 (1992), 293--318. 
							\bibitem{Gis2017}
							P.\ Giselsson,
							Tight global linear convergence rate bounds for Douglas--Rachford splitting
							\emph{Journal of Fixed Point Theory and Applications}~19 (2017),
							2241--2270.
							
							\bibitem{GJ08}
							M.I.\ Garrido and J.\ Jaramillo,  
							Lipschitz-type functions on metric spaces. 
							\emph{Journal of Mathematical  Analysis 
							and  Applications}~340 (2008), 282--290. 
							
							
							\bibitem{Minty}
							G.J.\ Minty,
							Monotone (nonlinear) operators in Hilbert spaces,
							\emph{Duke Mathematical Journal}~29 (1962), 341--346.
							
							\bibitem{MVan2019}
							W.M.\ Moursi and  L. Vandenberghe,
							Douglas--Rachford splitting for the sum of a Lipschitz continuous and a strongly monotone operator,
							\emph{Journal of Optimization Theory and Applications}~183 (2019), 179--198
							
							
							\bibitem{Parker55}
							F. D.\ Parker,  Integrals of Inverse Functions,
							\emph{American  Mathematical Monthly}~62,  (1955) 439--440.
							
							
								\bibitem{Rak62}
							E.\ Rakotch, A note on contractive mappings,
							\emph{Proceedings of the  American Mathematical  Society}~13 (1962), 459--465.
							
							\bibitem{ReiZas00}
							S.\ Reich and A.J.\ Zaslavski,
							Almost all nonexpansive mappings are contractive,
							\emph{La Soc\'et\'e Royale du Canada. L'Acad\'emie des Sciences. Comptes Rendus Math\'ematiques}~22 (2000), 118--124.
							
							\bibitem{Rock70}
							R.T.\ Rockafellar,
							\emph{Convex Analysis},
							Princeton University Press, Princeton, 1970.
							\bibitem{Rock1970}
							R.T.\ Rockafellar,  On the maximal monotonicity of subdifferential mappings, 
							\emph{Pacific Journal of  Mathematics}~33, 209--216 (1970).
							
							\bibitem{Rock98}
							R.T.\ Rockafellar and R. J-B\ Wets,
							\emph{Variational Analysis},
							Springer-Verlag, 
							corrected 3rd printing, 2009.
							
							
							\bibitem{Simons1}
							S.\ Simons,
							\emph{Minimax and Monotonicity},
							Springer-Verlag,
							1998.
							
							\bibitem{Simons2} S.\ Simons, {\em From Hahn--Banach to Monotonicity}, Springer--Verlag, 2007.
							
							\bibitem{Za83} C.\ Z\u{a}linescu, On uniformly convex functions, {\em 
								Journal Mathematical  Analysis and  Appllications}~{95} (1983), 344--374.
							
							\bibitem{Za02} C.\ Z\u{a}linescu, {\em Convex Analysis in General Vector Spaces}, World Scientific, 2002.
							\bibitem{Zeidler2a}
							E.\ Zeidler,
							\emph{Nonlinear Functional Analysis and Its Applications II/A:
								Linear Monotone Operators},
							Springer-Verlag, 1990.
							
							\bibitem{Zeidler2b}
							E.\ Zeidler,
							\emph{Nonlinear Functional Analysis and Its Applications II/B:
								Nonlinear Monotone Operators},
							Springer-Verlag, 1990.		
							
						\end{thebibliography}
					\end{document}